\documentclass[11pt,reqno]{amsart}
\usepackage{amsthm,amssymb,amsfonts}
\usepackage[nosumlimits]{mathtools}
\usepackage{mathrsfs}
\usepackage{graphicx}
\usepackage{accents}
\usepackage{algorithm}
\usepackage{algpseudocode}
\usepackage{tikz-cd}
\usepackage{enumerate}
\usepackage{quiver}
\usepackage{stmaryrd}
\usepackage{enumitem}
\usepackage{subcaption}
\usepackage{adjustbox}
\usepackage[margin=1in]{geometry}

\usepackage{hyperref,xcolor}
\hypersetup{
    colorlinks,
    linkcolor={red!50!black},
    citecolor={blue!50!black},
    urlcolor={blue!80!black}
}

\usepackage{thmtools}
\usepackage{thm-restate}

\let\oldlhd\unlhd
\renewcommand*{\unlhd}{\mathrel{\mskip0.1mu \oldlhd \mskip0.1mu}}

\newcommand{\myto}{\mathrel{\tikz[baseline]\draw[->, line width=0.06em](0,0.35em)--(1.5em,0.35em);}}
\newcommand{\mydashrightarrow}{\mathrel{\tikz[baseline, line width=0.06em]\draw[dashed,->](0,0.35em)--(1.5em,0.35em);}}

\theoremstyle{plain}
\newtheorem{theorem}{Theorem}[section]
\newtheorem{proposition}[theorem]{Proposition}
\newtheorem{lemma}[theorem]{Lemma}
\newtheorem{corollary}[theorem]{Corollary}
\newtheorem{remark}[theorem]{Remark}
\theoremstyle{definition}
\newtheorem{definition}[theorem]{Definition}
\newtheorem{example}[theorem]{Example}

\newtheorem{conjecture}[theorem]{Conjecture}

\DeclareMathOperator{\adj}{adj}
\DeclareMathOperator{\AR}{AR}
\DeclareMathOperator{\GR}{GR}
\DeclareMathOperator{\Id}{Id}

\DeclareMathOperator{\codim}{codim}
\DeclareMathOperator{\spa}{span}
\DeclareMathOperator{\Hom}{Hom}

\DeclareMathOperator{\PR}{PR}

\DeclareMathOperator{\height}{ht}
\DeclareMathOperator{\str}{str}
\DeclareMathOperator{\Brk}{Brk}

\DeclareMathOperator{\rank}{rank}

\DeclareMathOperator{\Spec}{Spec}
\DeclareMathOperator{\ch}{char}

\DeclareMathOperator{\Sing}{Sing}

\DeclareMathOperator{\mdeg}{m-deg}

\begin{document}
\title{Geometry of multilinear varieties over infinite fields and its applications}
\date{}
\author[Q-Y.~Chen]{Qiyuan~Chen}
\address{State Key Laboratory of Mathematical Sciences, Academy of Mathematics and Systems Science, Chinese Academy of Sciences, Beijing 100190, China}
\email{chenqiyuan@amss.ac.cn}
\author[K.~Ye]{Ke Ye}
\address{State Key Laboratory of Mathematical Sciences, Academy of Mathematics and Systems Science, Chinese Academy of Sciences, Beijing 100190, China}
\email{keyk@amss.ac.cn}
\begin{abstract}
Multilinear varieties,  defined as the sets of rational points of varieties cut out by multilinear functions, were first introduced and studied by Gowers and Milićević [Proc. Edinb. Math. Soc., 2021] for finite $\mathbb{K}$. In this paper, we investigate multilinear varieties over infinite fields from a geometric perspective.  We establish two fundamental results: a codimension formula for the Zariski closure of a multilinear variety, and the existence of a high-dimensional irreducible subvariety passing through any given $\mathbb{K}$-rational point. These results serve as a geometric foundation for analyzing various ranks of tensors and homogeneous polynomials, including partition rank, analytic rank, geometric rank,  (collective) strength and (collective) Birch rank.  As applications,  we resolve the Adiprasito-Kazhdan-Ziegler conjecture [arXiv:2102.03659,  2021] on the stability of partition rank for perfect infinite fields.  We thereby settle the stability conjecture for collective strength [Selecta Math., 2024],  as well as the conjecture on the linear equivalence between strength and Birch rank [arXiv:2410.00248, 2024] for such fields.  Moreover,  our results immediately yield a strengthening of the theorems of Bik-Draisma-Snowden [arXiv:2401.02067, 2024] and Lampert-Snowden [arXiv:2406.18498, 2024], for multilinear varieties over infinite fields. 
\end{abstract}
\maketitle


\section{Introduction}
Let $\mathbb{K}$ be a field and let $f:\mathbb{K}^{n_1} \times \cdots \times \mathbb{K}^{n_d} \to \mathbb{K}$ be a function.  For each $j \in [d]$,  we denote by $\deg_{j} f = e$ if $f(\mathsf{v}_1,\dots, \mathsf{v}_{j-1},x_j,  \mathsf{v}_{j+1},\dots,  \mathsf{v}_{d}): \mathbb{K}^{n_j} \to \mathbb{K}$ is a homogeneous polynomial of degree $e$ in $x_j$,  for any $\mathsf{v}_s \in \mathbb{K}^{n_s}$,  $s \in [d] \setminus \{j\}$.  If $\deg_j f \le e_j$ for each $j \in [k]$,  then we write $\mdeg f \le (e_1,\dots,  e_d)$.  Given functions $f_1,\dots,  f_m: \mathbb{K}^{n_1} \times \cdots \times \mathbb{K}^{n_d} \to \mathbb{K}$ with $\mdeg f_j \le (1,\dots,  1)$,  $j \in [m]$,  the \emph{multilinear variety} $Z_{f_1,\dots,  f_m} (\mathbb{K})$ consists of $\mathbb{K}$-rational points of $Z_{f_1,\dots,  f_m} \coloneqq \{\mathsf{v} \in \overline{\mathbb{K}}^{n_1} \times \cdots \times \overline{\mathbb{K}}^{n_d}:  f_1 (\mathsf{v}) = \cdots = f_m(\mathsf{v}) = 0\}$. Given a multilinear map $F = (f_1,\dots,  f_m) \in \Hom(\mathbb{K}^{n_1} \times \cdots \times \mathbb{K}^{n_d},  \mathbb{K}^m)$,  we denote $Z_F(\mathbb{K}) \coloneqq Z_{f_1,\dots,  f_m} (\mathbb{K})$.

Multilinear varieties were first defined by Gowers and Mili\'{c}evi\'{c} in \cite{GM21} over finite fields to study the extension problem of multilinear maps.  Recently,  Mili\'{c}evi\'{c} proved in \cite{Luka25} that every dense multilinear variety over a finite field must contain a multilinear variety of small codimension.  It is worth mentioning that a multilinear variety having codimension at most $r$ in \cite{Luka25} means it is cut out by at most $r$ multilinear functions,  which differs from the usual codimension in algebraic geometry.  On the other hand,  multilinear varieties of the form $Z_F(\mathbb{K})$ are ubiquitous in the study of various structures of multilinear maps.  For instance,  the geometric rank \cite{kopparty2020geometric},  isotropy index \cite{Baer38,BGH87,CXY26}, completeness index \cite{Qiao23,CY25},  together with the analytic rank over finite fields \cite{gowers2011linear} and infinite fields \cite{kazhdan2024schmidt,bik2025strength},  are all defined as invariants of $Z_F(\mathbb{K})$.  

Suppose $\mathbb{K}$ is infinite.  Given a subset $S \subseteq \mathbb{K}^n$,  we denote $\mathcal{I}(S) \coloneqq \{g \in \mathbb{K}[x_1,\dots,  x_n]: g(\mathsf{v}) = 0,\; \mathsf{v} \in S\}$,  and
\begin{equation}\label{eq:Xclosure}
\overline{S} \coloneqq \{ v\in \overline{\mathbb{K}}^n: f(v)=0,\; f\in \mathcal{I}(S) \}.
\end{equation} 
Note that $\overline{S}$ is the closure of $S$ in $\overline{\mathbb{K}}^n$,  not the closure of $S$ in $\mathbb{K}^n$. The objects of study in this paper are $Z_{f_1,\dots,  f_m} (\mathbb{K})$ and $\overline{Z_{f_1,\dots,  f_m}(\mathbb{K})}$.  

\subsection{Main results}\label{subsec:main}
We establish in Section~\ref{sec:geometry} two geometric properties of multilinear varieties that distinguish them from general varieties.  The first is a codimension formula for $\overline{Z_F(\mathbb{K})}$,  proved in Subsection~\ref{subsec:codim}. 
\begin{theorem}[Codimension formula]\label{thm:codim}
For any infinite field $\mathbb{K}$ and multilinear map $F: \mathbb{K}^{n_1} \times \cdots \times \mathbb{K}^{n_d} \to \mathbb{K}^m$,  we have 
\begin{equation}\label{thm:codim:eq}
\codim \overline{Z_F(\mathbb{K})} = \min_{0 \le r \le \min\{n_d,  m\} } \big\lbrace
r + \codim \overline{W_{r,F}(\mathbb{K})}
\big\rbrace,
\end{equation}
where $W_{r,F}(\mathbb{K})$ is the set defined as
\[
W_{r,F}(\mathbb{K}) \coloneqq \{ (\mathsf{v}_1,\dots,  \mathsf{v}_{d-1}) \in \mathbb{K}^{n_1} \times \cdots \times \mathbb{K}^{n_{d-1}}:  \rank F(\mathsf{v}_1,\dots,  \mathsf{v}_{d-1},\cdot) = r \}. 
\]
\end{theorem}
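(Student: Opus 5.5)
Write $V = \mathbb{K}^{n_1}\times\cdots\times\mathbb{K}^{n_{d-1}}$, $N = n_1+\cdots+n_{d-1}$ and $n = N+n_d$. The plan is to fibre $Z_F(\mathbb{K})$ over the first $d-1$ factors. The projection $\pi: Z_F(\mathbb{K}) \to V$ is surjective, because $(\mathsf{v}',0)\in Z_F(\mathbb{K})$ for every $\mathsf{v}'\in V$. For $\mathsf{v}'=(\mathsf{v}_1,\dots,\mathsf{v}_{d-1})$ the fibre is the linear space $\ker F(\mathsf{v}',\cdot)\subseteq\mathbb{K}^{n_d}$, and its dimension is $n_d - r$ exactly when $\mathsf{v}'\in W_{r,F}(\mathbb{K})$. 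This gives a finite disjoint decomposition
\[
Z_F(\mathbb{K}) = \bigsqcup_{r=0}^{\min\{n_d,m\}} Z_r, \qquad Z_r \coloneqq \{(\mathsf{v}',\mathsf{v}_d): \mathsf{v}'\in W_{r,F}(\mathbb{K}),\ F(\mathsf{v}',\mathsf{v}_d)=0\}.
\]
Zariski closure in the sense of \eqref{eq:Xclosure} commutes with finite unions, since $\mathcal{I}(A\cup B)=\mathcal{I}(A)\cap\mathcal{I}(B)$ and $\overline{A\cup B}=\overline{A}\cup\overline{B}$. Hence $\codim\overline{Z_F(\mathbb{K})} = \min_r \codim \overline{Z_r}$. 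The theorem therefore reduces to the identity $\dim\overline{Z_r} = \dim\overline{W_{r,F}(\mathbb{K})} + n_d - r$, which is equivalent to $\codim\overline{Z_r} = r + \codim\overline{W_{r,F}(\mathbb{K})}$ computed in the ambient space of dimension $n$.

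For the upper bound on $\dim\overline{Z_r}$, set $W=\overline{W_{r,F}(\mathbb{K})}$. I would show that $\overline{Z_r}$ is contained in
\[
Y_r \coloneqq \{(x',x_d): x'\in W,\ F(x',x_d)=0\} \cap \{x' : \rank F(x',\cdot)\le r\}\times\overline{\mathbb{K}}^{n_d}.
\]
This holds because $Y_r$ is Zariski closed and defined over $\mathbb{K}$, and it contains $Z_r$. Over a point $x'$ of $W$, the fibre of $Y_r$ has dimension $n_d-\rank F(x',\cdot) \ge n_d-r$, so $Y_r$ is not yet sharp. To fix this, stratify $W$ by the actual rank $s\le r$ (the set $\{\rank = s\}$ is locally closed). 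Each stratum contributes dimension at most $\dim(W\cap\{\rank=s\}) + n_d - s$. The subtlety is that strata with $s<r$ lie in a proper closed subset of each irreducible component of $W$ that meets rank $r$. So one needs $\dim(W\cap\{\rank\le s\})\le \dim W-(r-s)$, which is false in general.

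I would avoid this by computing the closure more cleverly. Using the $\mathbb{K}$-points, over the open subset $U\subseteq W$ where $\rank F(x',\cdot)=r$, the set $Y_r|_U$ is a vector bundle of rank $n_d-r$. This makes its closure irreducible component-wise, of dimension $\dim W+n_d-r$. Since $Z_r\subseteq Y_r|_U$, we get $\overline{Z_r}\subseteq\overline{Y_r|_U}$, and this gives the upper bound.

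For the lower bound, I would show that $Z_r$ is Zariski dense in $Y_r|_U$ restricted to the components of $W$. Over $W_{r,F}(\mathbb{K})$, choose locally (on a $\mathbb{K}$-open set given by the nonvanishing of a fixed $r\times r$ minor) a basis of $\ker F(x',\cdot)$ given by rational functions of $x'$ with coefficients in $\mathbb{K}$, via Cramer's rule. This gives a birational parametrization $U_0\times\overline{\mathbb{K}}^{n_d-r}\to Y_r|_{U_0}$ defined over $\mathbb{K}$, which sends $\mathbb{K}$-points to $\mathbb{K}$-points. The finitely many minors cover $W_{r,F}(\mathbb{K})$, so each piece $W_{r,F}(\mathbb{K})\cap\{\text{minor}\neq0\}$ has the same closure as a union of components. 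By infinitude of $\mathbb{K}$, $\mathbb{K}^{n_d-r}$ is dense in $\overline{\mathbb{K}}^{n_d-r}$. Hence the product of a dense set of $U_0$ with $\mathbb{K}^{n_d-r}$ is dense in $U_0\times\overline{\mathbb{K}}^{n_d-r}$, and its image $Z_r\cap\{\text{minor}\neq 0\}$ is dense in the image.

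I expect this density argument to be the main obstacle: a product of dense sets is dense in the product, and dominant maps defined over $\mathbb{K}$ preserve this. It requires care because $\overline{S}$ is closure with respect to $\mathbb{K}$-polynomials. I would handle it by checking, for a polynomial $g$ vanishing on the image, that $g(\phi(x',t))$ vanishes on $W_{r,F}(\mathbb{K})\times\mathbb{K}^{n_d-r}$. Expanding in $t$ then shows that it vanishes identically on the closure.
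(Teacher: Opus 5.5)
Your proposal is correct and follows essentially the paper's route: split $Z_F(\mathbb{K})$ by the rank $r$ of $F(\mathsf{v}',\cdot)$, trivialize the kernel via Cramer's rule on the locus where a fixed $r\times r$ minor is nonzero, and conclude $\dim\overline{Z_r}=\dim\overline{W_{r,F}(\mathbb{K})}+n_d-r$ before minimizing over $r$. The only difference is that the paper gets both inequalities at once from the mutually inverse rational maps $\Phi,\Psi$ and Corollary~\ref{cor-1}, whereas you prove the upper bound by containment in the rank-$r$ kernel bundle over $\overline{W_{r,F}(\mathbb{K})}$ and the lower bound by an explicit density check, which is a valid, slightly more hands-on substitute.
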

It is important to note that $\codim \overline{Z_F(\mathbb{K})}$ differs from $\height \mathcal{I}(Z_F(\mathbb{K}))$.  In this regard,  a remark follows.  Assume that $X \subseteq \mathbb{K}^n$ is the set of common zeros of $P_1,\dots,  P_m \in \mathbb{K}[x_1,\dots,  x_n]$.  It is clear that $\mathcal{I} \coloneqq \langle P_1,\dots,  P_m \rangle \subseteq \mathcal{I}(X)$ and 
\begin{equation}\label{eq:codim-ht}
\codim \overline{X} = \height \mathcal{I}(X) \ge \height \mathcal{I}.
\end{equation}
We notice that the inequality in \eqref{eq:codim-ht} can be strict in general. For instance,  if $P(x,y) = x^2 + y^2 \in \mathbb{R}[x,y]$,  then $X = \overline{X} = \{ (0,0) \}$ and $\mathcal{I}_X = \langle x, y \rangle$.  This implies $\codim \overline{X} = 2 > \height \langle P \rangle = 1$.  Indeed,  $\height \mathcal{I}$ equals the codimension of the affine scheme $\Spec(\mathbb{K}[x_1,\dots,  x_n]/\mathcal{I})$,  which is extensively studied in classical algebraic geometry \cite{hartshorne2013algebraic,Eisenbud00,Vakil25}. Moreover,  we show in Example~\ref{example1} that the inequality in \eqref{eq:codim-ht} can also be strict for multilinear varieties. 

The second result concerns the existence of a high-dimensional subvariety of $\overline{Z_{f_1,\dots,  f_m}(\mathbb{K})}$,  passing through a given point $\mathsf{v} \in Z_{f_1,\dots,  f_m}(\mathbb{K})$. Indeed,  we prove in Subsection~\ref{subsec:subvar} that such a subvariety exists in a more general setting. 
\begin{theorem}[High-dimensional subvarieties]\label{thm:Krull}
Let $\mathbb{K}$ be an infinite field. Suppose that $f_1,\dots,  f_m: \mathbb{K}^{n_1} \times \cdots \times \mathbb{K}^{n_d} \to \mathbb{K}$ are functions such that $\mdeg f_1,  \dots,  \mdeg f_m \le (1,\dots, 1)$. For any $c_1,\dots,  c_m \in \mathbb{K}$ and any $\mathsf{v} \in  \mathbb{K}^{n_1} \times \cdots \times \mathbb{K}^{n_d} $ such that $(f_1(\mathsf{v}),  \dots,  f_m(\mathsf{v})) = (c_1,\dots,  c_m)$,  there exists some $W \subseteq Z \coloneqq \lbrace \mathsf{v} \in \mathbb{K}^{n_1} \times \cdots \times \mathbb{K}^{n_d}: (f_1(\mathsf{v}),\dots,  f_m(\mathsf{v})) = (c_1,\dots,  c_m)  \rbrace$ satisfying the following properties: 
\begin{enumerate}[label = (\alph*)]
\item $\mathsf{v}$ is contained in $\overline{W}$.  \label{thm:Krull:cond1}
\item $\overline{W}$ is irreducible.  \label{thm:Krull:cond2}
\item $\codim (\overline{W}) \le 2^d m$. \label{thm:Krull:cond3}
\end{enumerate}
\end{theorem}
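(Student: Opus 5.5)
We argue by induction on $d$, splitting off the last factor. Write $\mathsf{v} = (\mathsf{v}', \mathsf{v}_d)$ with $\mathsf{v}' = (\mathsf{v}_1,\dots,\mathsf{v}_{d-1})$, and $x = (x', x_d)$.

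\emph{Base case $d=1$.} Each $f_j$ is a linear form on $\mathbb{K}^{n_1}$, so $Z$ is a $\mathbb{K}$-rational affine subspace of codimension at most $m$ containing $\mathsf{v}$. Since $\mathbb{K}$ is infinite, $\overline{Z}$ is the corresponding irreducible affine subspace of $\overline{\mathbb{K}}^{n_1}$. Hence $W = Z$ works, with $\codim \overline{W} \le m \le 2m$.

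\emph{Setup for $d \ge 2$.} Since $\deg_d f_j \le 1$, we can write
\[
f_j(x', x_d) = f_j(x', \mathsf{v}_d) + A(x')_j\,(x_d - \mathsf{v}_d), \qquad j \in [m],
\]
where $A(x') \colon \mathbb{K}^{n_d} \to \mathbb{K}^m$ is the linear map $F(x', \cdot)$. Its entries, and the functions $g_j(x') \coloneqq f_j(x', \mathsf{v}_d)$, all have $\mdeg \le (1,\dots,1)$ in the $d-1$ factors.

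\emph{Inductive step.} Apply the induction hypothesis to $g_1,\dots,g_m$ at the point $\mathsf{v}'$ with values $(c_1,\dots,c_m)$. This gives a set $W' \subseteq \{x' : g_j(x') = c_j \ \forall j\}$ such that $\mathsf{v}' \in \overline{W'}$, $\overline{W'}$ is irreducible, and $\codim \overline{W'} \le 2^{d-1} m$. Let $r$ be the generic rank of $A$ on $\overline{W'}$, and let $U \subseteq \overline{W'}$ be the nonempty open set where $\rank A = r$. Define
\[
W \coloneqq \{ (x', y) : x' \in W' \cap U,\ y \in \mathsf{v}_d + \ker A(x') \}.
\]
By the displayed identity, $W \subseteq Z$.

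\emph{Dimension and irreducibility.} Over $U$, the set $\{(x',y) : A(x')(y-\mathsf{v}_d) = 0\}$ is a translate of a rank-$(n_d - r)$ vector subbundle. It is therefore irreducible of dimension $\dim \overline{W'} + n_d - r$. This gives
\[
\codim \le \codim \overline{W'} + r \le 2^{d-1} m + m \le 2^d m .
\]

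\emph{Containing $\mathsf{v}$.} The section $y = \mathsf{v}_d$ lies in $W$ over $W' \cap U$. Because $\overline{W'}$ is irreducible and $U$ is a nonempty open subset, $W' \cap U$ is dense in $\overline{W'}$. Hence $(\mathsf{v}', \mathsf{v}_d)$ lies in the closure of this section, so $\mathsf{v} \in \overline{W}$, even if $\rank A(\mathsf{v}') < r$.

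\emph{Main obstacle.} The key step is to show that $\overline{W}$ equals the closure of this bundle. In other words, the $\mathbb{K}$-points of $W$ must be Zariski dense in it, not merely contained in it. I would handle this with an explicit rational trivialization. On a smaller open set $U_0 \subseteq U$, defined over $\mathbb{K}$, fix an $r\times r$ minor of $A$ that is nonzero there. Cramer's rule (the adjugate) then gives $n_d - r$ vectors of rational functions over $\mathbb{K}$ spanning $\ker A(x')$ for every $x' \in U_0$. This yields a birational $\mathbb{K}$-map $U_0 \times \overline{\mathbb{K}}^{n_d - r} \to$ bundle, which sends $(W' \cap U_0(\mathbb{K})) \times \mathbb{K}^{n_d-r}$ into $W$.

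The source set is Zariski dense in $\overline{W'} \times \overline{\mathbb{K}}^{n_d-r}$. This uses the density of $W' \cap U_0$ in $\overline{W'}$ (same argument as above, with $U_0$ in place of $U$) together with $\mathbb{K}$ being infinite; the density of a product of dense sets follows, e.g., from a fibrewise argument. Since the map is dominant, its image is dense in the bundle. Hence $\overline{W}$ is the irreducible closure of the bundle, and conditions \ref{thm:Krull:cond1}--\ref{thm:Krull:cond3} follow.
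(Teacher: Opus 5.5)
Your proof is correct, and it takes a leaner route than the paper's. Your recursion also proves a sharper bound than the one you state.

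\textbf{The paper's route.} The paper translates $\mathsf{v}$ to the origin and replaces each $f_j$ by all $2^d$ partial evaluations $f_{j,I}$. Their common zero set coincides with that of the homogeneous multilinear components of $f_j(\mathsf{v}+x)-c_j$, so the target set becomes a cone. It then builds $W_1, W_2,\dots$ from the first factor upward. At each stage it fibers over $\overline{W}_s$ by the kernel of the matrix of linear parts of \emph{all} newly available $f_{j,I}$, taken at their generic rank. It gets $\mathsf{v}\in\overline{W}$ from the scaling invariance $\lambda W_s=W_s$ carried through the induction. Each stage costs at most the number of new auxiliary functions, which is where $2^d m$ comes from.

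\textbf{Your route.} You keep the inhomogeneous system anchored at $\mathsf{v}$ and peel off the last factor. At each level you impose only the $m$ conditions $A(x')(x_d-\mathsf{v}_d)=0$, on top of the recursively handled $g_j(x')=c_j$. You capture $\mathsf{v}$ through the section $x_d=\mathsf{v}_d$ over the dense set $W'\cap U$, together with $\mathsf{v}'\in\overline{W'}$. So no homogeneity or cone argument is needed.

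\textbf{Shared core.} Both proofs rely on the adjugate trivialization of the kernel over the locus where a fixed $r\times r$ minor is nonzero. This is what transfers Zariski density of $\mathbb{K}$-points. The paper packages it as Proposition~\ref{prop:fixed rank} plus Corollary~\ref{cor-1}. You do it directly, via $\overline{S\times T}=\overline{S}\times\overline{T}$ and the isomorphism $U_0\times\overline{\mathbb{K}}^{n_d-r}\to E|_{U_0}$.

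\textbf{What your route buys.} Your base case costs at most $m$, and each level adds only $r\le m$. Running the same induction with the hypothesis $\codim\overline{W'}\le (d-1)m$ therefore gives $\codim\overline{W}\le dm$. This beats $2^d m$ for every $d$, and the improvement carries over to Corollary~\ref{cor:fiber dim} and to the abundance-of-rational-points corollary.
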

Theorem~\ref{thm:Krull} can fail when $f_1$,  $\dots$,  $f_m$ are not multilinear.  As a concrete example,  for any integer $n \ge 5$,  let $P(x_1,\dots,  x_n) = x_1^2 + \cdots + x_n^2 \in \mathbb{R}[x_1, \dots,  x_n]$.  Then $\overline{X} = \overline{Z_P(\mathbb{R})} = \{ (0,0, \dots, 0) \}$ is irreducible with codimension at least $5$.  Let $X \subseteq \mathbb{K}^n$ be the set of common zeros of $P_1,\dots, P_m \in \mathbb{K}[x_1,\dots,  x_n]$.  Since $\langle P_1,\dots,  P_m \rangle \subseteq \mathcal{I}(X)$,  it holds that
\[
\overline{X} \subseteq Z_{P_1,\dots,  P_m} \coloneqq \lbrace  
\mathsf{v} \in \overline{\mathbb{K}}^n: P_1(\mathsf{v}) = \cdots = P_m (\mathsf{v}) = 0
\rbrace.
\]
According to Krull’s principal ideal theorem,  each $\mathsf{v} \in Z_{P_1,\dots,  P_m}$ must be contained in some irreducible component of $ Z_{P_1,\dots,  P_m}$ of codimension at most $m$.  This does not ensure the existence of a high-dimensional irreducible subset of $\overline{X}(\mathbb{K})$ passing through $\mathsf{v} \in \overline{X}(\mathbb{K})$,  unless $\mathbb{K}$ is algebraically closed.  Therefore,  Theorem~\ref{thm:Krull} may be regarded as Krull’s principal ideal theorem for multilinear varieties.  As a direct consequence of Theorem~\ref{thm:Krull},  we derive Corollary~\ref{cor:fiber dim},  which is an analogue of the fiber dimension theorem \cite[Exercise~3.22]{hartshorne2013algebraic} for multilinear varieties.  

\subsection{Applications}\label{subsec:app}
Theorems~\ref{thm:codim} and \ref{thm:Krull} have several direct consequences,  which we briefly discuss in this subsection.  We denote by $\mathbb{K}[x_1,\dots,  x_n]_d$ the space of all homogeneous polynomials of degree $d$.  Let $f \in \Hom(\mathbb{K}^{n_1} \times \cdots \times \mathbb{K}^{n_d},  \mathbb{K})$ and $P_1,\dots,  P_m \in \mathbb{K}[x_1,\dots,  x_n]_d$.  Our applications are primarily concerned with the analytic rank $\AR_{\mathbb{K}}(f)$,  partition rank $\PR_{\mathbb{K}}(f)$,  geometric rank $\GR(f)$,  collective strength $\str_{\mathbb{K}}(P_1,\dots,   P_m)$ and collective Birch rank $\Brk(P_1,\dots,   P_m)$. For the precise definitions of these invariants,  we refer the reader to Definition~\ref{def-ar}.   

\subsubsection{Stability of ranks}
The first application concerns the stability of partition rank and strength under field extensions.  Given $f \in \Hom(\mathbb{K}^{n_1} \times \cdots \times \mathbb{K}^{n_d},  \mathbb{K})$ and a field extension $\mathbb{F}/\mathbb{K}$,  we denote by $f^{\mathbb{F}} \in \Hom( \mathbb{F}^{n_1} \times \cdots \times \mathbb{F}^{n_d},  \mathbb{F})$ the $\mathbb{F}$-multilinear function induced by $f$.  For convenience,  we simply write $R_{\mathbb{F}}(f) \coloneqq R_{\mathbb{F}}(f^{\mathbb{F}})$.  We say that $R$ has the \emph{stability} if $R_{\overline{\mathbb{K}}}(f) \asymp_d R_{\mathbb{K}}(f)$ for all $f \in \Hom(\mathbb{K}^{n_1} \times \cdots \times \mathbb{K}^{n_d},  \mathbb{K})$. Here ``$\asymp_{d}$" means the linear equivalence between two functions (cf. Definition~\ref{def:equiv}). We recall the following key conjecture on the stability of partition rank,  proposed by Adiprasito,  Kazhdan and Ziegler.
\begin{conjecture}[Stability of partition rank] \cite[Conjecture~1.7]{adiprasito2021schmidt}\label{conj:stabilitypr}
Let $\mathbb{K}$ be a field.  For any integer $d \ge 2$ and $f \in \Hom(\mathbb{K}^{n_1} \times \cdots \times \mathbb{K}^{n_d}, \mathbb{K})$, we have $\PR_{\mathbb{K}} (f) \asymp_d \PR_{\overline{\mathbb{K}}} (f)$.
\end{conjecture}
Over finite fields, Conjecture~\ref{conj:stabilitypr} is equivalent to Conjecture~\ref{conj:pvsa},  together with two other conjectures \cite[Theorem~5.6]{chen2024stability}.  By \cite[Claim~3.2]{LZ24}, Conjecture~\ref{conj:stabilitypr} implies the following:
\begin{conjecture}[Stability of collective strength]\cite[Conjecture~1.6]{LZ24}\label{conj:stabilitystr}
Let $\mathbb{K}$ be a field with $\ch(\mathbb{K}) = 0$ or $\ch(\mathbb{K}) > d$.  Then $\str_{\mathbb{K}} (P) \asymp_d \str_{ \overline{\mathbb{K}}} (P)$ for any $P \in \mathbb{K}[x_1,\dots,  x_n]_d$.
\end{conjecture}
Note that if $\ch(\mathbb{K}) = p \le d$ for some prime $p$,  then \cite[Example~1.2]{BDS25} shows that $\str_{\mathbb{K}}(P)$ can be arbitrarily larger than $\str_{\overline{\mathbb{K}}}(P)$,  implying the necessity of $\ch(\mathbb{K}) > d$ in Conjecture~\ref{conj:stabilitystr}.  On the other hand,  since $f \in \Hom(\mathbb{K}^{n_1} \times \cdots \times \mathbb{K}^{n_d},  \mathbb{K})$ can be viewed as a polynomial in $\mathbb{K}[x_{j,s}]_{j \in [d],\; s \in [n_j]}$, we have $\PR_{\mathbb{K}}(f) = O_d (\str_{\mathbb{K}}(f))$.  In particular,  Conjectures~\ref{conj:stabilitypr} and \ref{conj:stabilitystr} are equivalent over any field $\mathbb{K}$ with $\ch(\mathbb{K}) = 0$ or $\ch(\mathbb{K}) > d$.  However, Conjecture~\ref{conj:stabilitystr} is slightly weaker than Conjecture~\ref{conj:stabilitypr} because of the additional assumption that $\ch(\mathbb{K}) > d$. In Theorems~\ref{thm:AKZ} and \ref{thm:stability-collective-str},  we prove the stability of partition rank and collective strength over perfect infinite fields, respectively.  As before,  all constants are determined explicitly. For convenience,  we summarize the two results in the following theorem without presenting the explicit constants.
\begin{theorem}[Stability]\label{thm:stability}
Let $d \ge 2$ be an integer.  For any $f \in \Hom(\mathbb{K}^{n_1} \times \cdots \times \mathbb{K}^{n_d},  \mathbb{K})$ and $P,P_1,\dots,  P_m \in \mathbb{K}[x_1,\dots,  x_n]_d$,  we have 
\begin{enumerate}[label = (\alph*)]
\item If $\mathbb{K}$ is perfect and infinite,  then $\PR_{\mathbb{K}} (f) \asymp_d \PR_{\overline{\mathbb{K}}} (f)$.\label{thm:stability:item1}
\item If either $\ch(\mathbb{K}) = 0$ or $\mathbb{K}$ is perfect and infinite with $\ch(\mathbb{K}) > d$,  then $\str_{\mathbb{K}}(P_1,\dots,  P_m) \asymp_{d,m} \str_{\overline{\mathbb{K}}}(P_1,\dots,  P_m)$.  \label{thm:stability:item2}
\end{enumerate}
\end{theorem}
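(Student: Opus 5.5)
The plan is to reduce both parts to a comparison of analytic-type invariants of multilinear varieties over $\mathbb{K}$ and over $\overline{\mathbb{K}}$, and to use Theorems~\ref{thm:codim} and \ref{thm:Krull} as the geometric input. For part~\ref{thm:stability:item1}, the inequality $\PR_{\overline{\mathbb{K}}}(f)\le \PR_{\mathbb{K}}(f)$ is trivial, so only an upper bound on $\PR_{\mathbb{K}}(f)$ in terms of $\PR_{\overline{\mathbb{K}}}(f)$ is needed. I would pass through the geometric rank. First, $\GR(f)=\codim Z_{\partial f}$, where $Z_{\partial f}\subseteq \overline{\mathbb{K}}^{n_1}\times\cdots\times\overline{\mathbb{K}}^{n_{d-1}}$ is the variety cut out by the multilinear map $\partial f(\mathsf{v}_1,\dots,\mathsf{v}_{d-1})=f(\mathsf{v}_1,\dots,\mathsf{v}_{d-1},\cdot)$. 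It is known over $\overline{\mathbb{K}}$ that $\GR(f)\asymp_d \PR_{\overline{\mathbb{K}}}(f)$, for instance by Cohen--Moshkovitz type results, which I would take as an input. Second, I would introduce the $\mathbb{K}$-analogue $\codim\overline{Z_{\partial f}(\mathbb{K})}$, which plays the role of analytic rank over infinite fields. I would show that it is at most $O_d(\GR(f))$, and that $\PR_{\mathbb{K}}(f)=O_d(\codim\overline{Z_{\partial f}(\mathbb{K})})$.

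For the comparison $\codim\overline{Z_{\partial f}(\mathbb{K})}\lesssim \GR(f)$, I would induct on $d$ using Theorem~\ref{thm:codim}. That theorem expresses the codimension as $\min_r\{r+\codim\overline{W_{r}(\mathbb{K})}\}$. The rank loci $W_r$ are cut out by the vanishing of $(r+1)$-minors, together with the non-vanishing of some $r$-minor. Perfectness of $\mathbb{K}$ is used here: the rank stratification is defined over $\mathbb{K}$, and $\mathbb{K}$-points are Zariski dense in the relevant smooth loci. Since Galois-stable reduced components descend only when $\mathbb{K}$ is perfect, the closure of $\mathbb{K}$-points of each stratum can be compared with the stratum over $\overline{\mathbb{K}}$. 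I expect this comparison to reduce to the $(d-1)$-linear case, giving a recursion with constants depending only on $d$.

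For the step from $\codim\overline{Z_{\partial f}(\mathbb{K})}$ to $\PR_{\mathbb{K}}(f)$, I would mimic the proof that analytic rank bounds partition rank. The key tool is Theorem~\ref{thm:Krull}, which supplies through any $\mathbb{K}$-point an irreducible subvariety of $\overline{Z(\mathbb{K})}$ with controlled codimension. I would use it to run a regularization or derivative argument. Take a generic $\mathbb{K}$-point, which exists because $\mathbb{K}$ is infinite. Take a high-dimensional irreducible piece $\overline{W}$ through it. Then use the linear span of $\mathbb{K}$-points of $W$ to produce a $\mathbb{K}$-rational linear subspace on which $f$ degenerates. The codimension of that subspace yields a partition-rank decomposition over $\mathbb{K}$ of size $O_d(\codim)$. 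This is also the point where Corollary~\ref{cor:fiber dim} would control the dimensions of fibers.

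Part~\ref{thm:stability:item2} then follows from part~\ref{thm:stability:item1}. Polarize the collective family $P_1,\dots,P_m$ into symmetric multilinear forms, which requires $\ch(\mathbb{K})>d$ or $\ch(\mathbb{K})=0$ to invert $d!$. Next, pass to a generic linear combination $\sum c_iP_i$ with $c_i\in\mathbb{K}$, available since $\mathbb{K}$ is infinite, which handles the collective version at the cost of constants in $m$. Finally, apply \cite[Claim~3.2]{LZ24} to transfer partition-rank stability to strength stability. When $\ch(\mathbb{K})=0$ the field is automatically perfect and infinite. The main obstacle is the second step above, namely extracting an honest $\mathbb{K}$-rational partition-rank decomposition from the geometric largeness of $\overline{Z(\mathbb{K})}$. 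My first attempt there would be an induction on $d$ via slicing along the last coordinate, combined with Theorem~\ref{thm:Krull}.
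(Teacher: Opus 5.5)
\textbf{Part (a): the middle link.} Your chain $\PR_{\mathbb{K}}(f)\lesssim\codim\overline{Z_{F_f}(\mathbb{K})}\lesssim\GR(f)\le\PR_{\overline{\mathbb{K}}}(f)$ has a genuine gap at the middle link. Over every infinite field, $\AR_{\mathbb{K}}\le\PR_{\mathbb{K}}\le(2^{d-1}-1)\AR_{\mathbb{K}}$ (Theorem~\ref{thm:pvsa}). Over $\overline{\mathbb{K}}$ the same gives $\GR\le\PR_{\overline{\mathbb{K}}}\le(2^{d-1}-1)\GR$. So the bound $\AR_{\mathbb{K}}(f)=O_d(\GR(f))$ is equivalent to the statement you are proving, and all the difficulty sits there.

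The mechanism you offer for that link is false. A stratum defined over $\mathbb{K}$ need not have Zariski-dense $\mathbb{K}$-points, even if it is geometrically irreducible with nonempty smooth locus and $\mathbb{K}$ is perfect. Galois descent gives you equations over $\mathbb{K}$, not rational points. Example~\ref{example1} defeats exactly your stratum comparison. Take $F(\alpha,\beta)=\alpha\beta$ on a central division algebra $\mathcal{D}$ of degree $r\ge 2$. The locus where $F(\alpha,\cdot)$ has rank $r(r-1)$ is dense in the reduced-norm hypersurface, so it has codimension $1$ over $\overline{\mathbb{K}}$. Yet $W_{s,F}(\mathbb{K})=\varnothing$ for every $0<s<r^2$. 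The minimum in Theorem~\ref{thm:codim} is $r^2$ over $\mathbb{K}$ and $\lceil 3r^2/4\rceil$ over $\overline{\mathbb{K}}$, attained on unrelated strata. The final inequality survives here, but no stratum-by-stratum comparison does. Also, the rank loci are cut out by minors of degree $s+1$ in each factor, so there is no recursion to a $(d-1)$-linear instance. Your other link, $\PR_{\mathbb{K}}=O_d(\AR_{\mathbb{K}})$, is simply cited from \cite{kazhdan2024schmidt}; Theorem~\ref{thm:Krull} plays no role in the proof.

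\textbf{The missing idea, and a second gap in part (b).} The paper uses a field-extension/direct-sum trick.
\begin{enumerate}
\item Choose a finite extension $\mathbb{L}/\mathbb{K}$ of degree $m$ over which an optimal $\overline{\mathbb{K}}$-decomposition is defined.
\item The results of \cite{chen2024stability}, where perfectness is used, give $\PR_{\mathbb{K}}(f^{\oplus k})\le(2m-1)\PR_{\mathbb{L}}(f)$ with $k=\lceil (m-1)/2\rceil$.
\item The new ingredient is the lower bound $\PR_{\mathbb{K}}(f^{\oplus k})\ge\frac{k}{2^{d-1}-1}\PR_{\mathbb{K}}(f)$ (Proposition~\ref{prop-9}). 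It follows from additivity of $\AR_{\mathbb{K}}$ under $\oplus$ together with $\AR_{\mathbb{K}}\le\PR_{\mathbb{K}}\le(2^{d-1}-1)\AR_{\mathbb{K}}$. The left inequality there is where Theorem~\ref{thm:codim} enters, via slicing independence (Proposition~\ref{prop-2}).
\item The degree $m$ cancels, giving the uniform constant $6(2^{d-1}-1)$.
\end{enumerate}

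In part (b), the single-polynomial case via \cite[Claim~3.2]{LZ24} is fine. Passing to a generic $\mathbb{K}$-combination, however, does not handle collective strength. You need a specific nonzero $\mathbb{K}$-rational combination whose strength is controlled by that of the $\overline{\mathbb{K}}$-minimizer $\sum a_iP_i$, and the $a_i$ need not lie in $\mathbb{K}$. A trace argument would cost a factor of $[\mathbb{K}(a):\mathbb{K}]$. The paper bridges this with the collective Birch rank, which is defined geometrically and does not see the base field:
\begin{enumerate}
\item $\Brk(P_1,\dots,P_m)\le 2\str_{\overline{\mathbb{K}}}(P_1,\dots,P_m)$. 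This holds because the singular locus of any combination lies in $Z(P_1,\dots,P_m)_{\Sing}$, combined with Lemma~\ref{lem:brk-str}.
\item $\str_{\mathbb{K}}(P_1,\dots,P_m)\le Cm(\Brk(P_1,\dots,P_m)+m-1)$, by the argument of \cite[Lemma~2.12]{baily2024strength} fed with the single-polynomial bound $\str_{\mathbb{K}}(P)=O_d(\Brk(P))$.
\end{enumerate}
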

Two remarks are in order. Firstly,  Conjecture~\ref{conj:stabilitypr} is true for $d = 3$ over perfect fields \cite{chen2024stability,moshkovitz2024uniform,derksen2022g}. For $d \ge 4$,  while the conjecture has been extensively studied over finite fields \cite{cohen2023partition,moshkovitz2022quasi} and partially over infinite fields \cite{kazhdan2023schmidt,lampert2024relative,bik2025strength,baily2024strength}, it remains largely open for infinite fields. To the best of our knowledge, \ref{thm:stability:item1} is the first to achieve substantial progress in this direction, resolving Conjecture~\ref{conj:stabilitypr} over all perfect infinite fields.  Secondly,  \cite[Theorem~1.5.3]{bik2025strength} implies that for any field $\mathbb{K}$ with $\ch(\mathbb{K}) = 0$ or $\ch(\mathbb{K}) > d$,  the following bound holds for all $P_1,\dots,  P_m \in \mathbb{K}[x_1,\dots, x_n]_d$:
\begin{equation}\label{eq:collstrstability}
\str_{\mathbb{K}} (P_1,\dots, P_m) = \begin{cases}
m^3  O_d(r^{d-1}) \quad &\text{if~} \mathbb{K} \text{~is infinte}, \\
m^3  O_d(r^{d-1} \log (r + m)) \quad &\text{if~} \mathbb{K} \text{~is finite}.
\end{cases}
\end{equation}
where $r \coloneqq \str_{\overline{\mathbb{K}}} (P_1,\dots, P_m)$.  Polynomial bounds that are similar to that in \eqref{eq:collstrstability} are also established for semi-perfect fields \cite[Theorem~1.3]{BDS25} and admissible fields \cite[Theorem~1.5]{LZ24}. In contrast,  for infinite perfect fields, \ref{thm:stability:item2} improves these polynomial bounds to a linear bound,  which partially resolves Conjecture~\ref{conj:stabilitystr}. 

\subsubsection{Linear equivalence of ranks} 

For easy reference,  we state two important conjectures in the arithmetic and combinatorial study of multilinear functions and polynomials. 
\begin{conjecture}[Partition rank vs.  Analytic rank]\cite[Conjecture~1.10]{adiprasito2021schmidt}\label{conj:pvsa}
Let $\mathbb{K}$ be a finite field and let $d \ge 2$ be an integer.  Then $\PR_{\mathbb{K}}(f) \asymp_d \AR_{\mathbb{K}}(f)$,  for any $f\in \Hom(\mathbb{K}^{n_1} \times \cdots \times \mathbb{K}^{n_d},\mathbb{K})$.
\end{conjecture}
Here $\AR_{\mathbb{K}}(f)$ denotes the analytic rank \cite{gowers2011linear} of $f$ over the finite field $\mathbb{K}$.  It is proved in \cite{moshkovitz2022quasi}  that Conjecture~\ref{conj:pvsa} is true up to a log-factor. According to \cite{baily2024strength},  the validity of Conjecture~\ref{conj:pvsa} implies that of the following conjecture over finite fields.
\begin{conjecture}[Strength vs.  Birch rank] \cite[Conjecture~1.3]{baily2024strength}\label{conj:svsb}
Let $\mathbb{K}$ be a field and let $d \ge 2$ be an integer.  Suppose that either $\ch(\mathbb{K}) = 0$ or $\ch(\mathbb{K}) > d$.  Then for any $P \in \mathbb{K}[x_1,\dots, x_n]_d$,  we have 
$\str_{\mathbb{K}} (P) \asymp_d \Brk(P)$.
\end{conjecture}
In fact, the implication in \cite{baily2024strength} also shows that if $\PR_{\mathbb{K}}(f) \asymp_d \GR_{\mathbb{K}}(f)$ over any infinite field $\mathbb{K}$, then Conjecture~\ref{conj:svsb} holds for infinite fields.  Again, as we remarked in the discussion of the relation between Conjectures~\ref{conj:stabilitypr} and \ref{conj:stabilitystr}, Conjecture~\ref{conj:svsb}, though closely related to the linear equivalence between partition rank and geometric rank, is slightly weaker. We establish linear equivalences among $\PR_{\mathbb{K}}(f)$, $\AR_{\mathbb{K}}(f)$ and $\GR(f)$, as well as the linear equivalence between $\str_{\mathbb{K}}(P_1,\dots,  P_m)$ and $\Brk(P_1,\dots,  P_m)$, over perfect infinite fields.  
\begin{theorem}[Linear equivalence]\label{thm:linear}
Let $d \ge 2$ be an integer.  For any $f \in \Hom(\mathbb{K}^{n_1} \times \cdots \times \mathbb{K}^{n_d},  \mathbb{K})$ and $P,  P_1,\dots,  P_m \in \mathbb{K}[x_1,\dots,  x_n]_d$,  we have 
\begin{enumerate}[label = (\alph*)]
\item If $\mathbb{K}$ is infinite,  $\PR_{\mathbb{K}} (f) \asymp_d \AR_{\mathbb{K}} (f)   $.
 \label{thm:linear:item1}
\item If $\mathbb{K}$ is perfect and infinite,  then  $\PR_{\mathbb{K}}(f) \asymp_d \GR(f)$.
\label{thm:linear:item2}
\item If either $\ch(\mathbb{K}) = 0$ or $\mathbb{K}$ is perfect and infinite with $\ch(\mathbb{K}) > d$,  then $\str_{\mathbb{K}} (P_1,\dots,  P_m) \asymp_{d,m} \Brk(P_1,\dots,  P_m)$.\label{thm:linear:item3}
\end{enumerate}
\end{theorem}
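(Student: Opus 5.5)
The plan is to obtain each item by reducing it to Theorems~\ref{thm:codim} and \ref{thm:Krull}, combined with known comparison results over $\overline{\mathbb{K}}$.

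\emph{Item \ref{thm:linear:item1}.} For infinite $\mathbb{K}$ I take $\AR_{\mathbb{K}}(f)$ to be $\codim \overline{Z_F(\mathbb{K})}$, where $F$ is the multilinear map $(\mathsf{v}_1,\dots,\mathsf{v}_{d-1}) \mapsto f(\mathsf{v}_1,\dots,\mathsf{v}_{d-1},\cdot)$ (Definition~\ref{def-ar}).
\begin{enumerate}[label=(\roman*)]
\item The bound $\AR_{\mathbb{K}}(f) \le \PR_{\mathbb{K}}(f)$ is elementary. A $\mathbb{K}$-rational partition-rank decomposition $f=\sum_{i=1}^r g_i h_i$ produces a $\mathbb{K}$-rational multilinear subvariety of codimension at most $r$ on which $F$ vanishes. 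Its $\mathbb{K}$-points are Zariski dense in it, since $\mathbb{K}$ is infinite and it is defined by multilinear equations.
\item The reverse bound is an induction on $d$ using the codimension formula. Theorem~\ref{thm:codim} gives some $r$ with $r+\codim\overline{W_{r,F}(\mathbb{K})}=\AR_{\mathbb{K}}(f)$.
\item If $r$ is small, then $\overline{W_{\le r,F}(\mathbb{K})}$ has small codimension. I will then apply Theorem~\ref{thm:Krull} to $W_{r,F}$, whose rank condition is cut out by minors, to find a large irreducible piece through a $\mathbb{K}$-point. This should yield a $\mathbb{K}$-rational low-partition-rank decomposition of the slices.
\item The mechanism for that decomposition is the standard one: at a generic rational point of an irreducible component, the tangent-space argument shows $f$ is a combination of few $\mathbb{K}$-rational forms. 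This gives $\PR_{\mathbb{K}}(f)=O_d(\AR_{\mathbb{K}}(f))$.
\end{enumerate}

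\emph{Items \ref{thm:stability:item1} and \ref{thm:linear:item2}.}
\begin{enumerate}[label=(\roman*)]
\item Over $\overline{\mathbb{K}}$ I use $\PR_{\overline{\mathbb{K}}} \asymp_d \GR$. This follows from the algebraically closed case of item \ref{thm:linear:item1}, since there $\AR=\GR$.
\item It therefore suffices to show $\AR_{\mathbb{K}}(f) \le C_d \GR(f)$ when $\mathbb{K}$ is perfect. The inequality $\GR \le \AR_{\mathbb{K}}$ is trivial, because $\overline{Z_F(\mathbb{K})}\subseteq Z_F$.
\item The key claim is that $\overline{Z_F(\mathbb{K})}$ contains a component of $Z_F$ of comparable codimension. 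I will argue by induction on $d$ via Theorem~\ref{thm:codim}, comparing the strata $\overline{W_{r,F}(\mathbb{K})}$ with the strata $W_{r,F}$ over $\overline{\mathbb{K}}$.
\item Perfectness is used here: the stratum $W_{r,F}$ of lowest codimension is a union of $\mathrm{Gal}$-conjugate irreducible components. Taking the intersection or span of the conjugates of a component gives a variety defined over $\mathbb{K}$, whose codimension is controlled by Theorem~\ref{thm:Krull}'s bound $2^d m$ applied with $m=O(\GR)$ equations. Such a variety has dense $\mathbb{K}$-points, since multilinear varieties over an infinite field that are defined over $\mathbb{K}$ and contain rational points have dense rational points.
\end{enumerate}

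\emph{Items \ref{thm:stability:item2} and \ref{thm:linear:item3}.} These are obtained by polarization. When $\ch(\mathbb{K})=0$ or $\ch(\mathbb{K})>d$, collective strength is linearly equivalent to the partition rank of a generic linear combination of the multilinearizations, by \cite[Claim~3.2]{LZ24}. Collective Birch rank is similarly equivalent to the geometric rank of the polarized map. Items \ref{thm:stability:item1} and \ref{thm:linear:item2}, applied to the multilinear map $(\tilde P_1,\dots,\tilde P_m)$, then give the claims with constants depending on $d,m$.

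The main obstacle is step (iv) of the middle part: producing a $\mathbb{K}$-rational subvariety, rather than merely a $\overline{\mathbb{K}}$-one, inside $\overline{Z_F(\mathbb{K})}$ of codimension $O_d(\GR(f))$. My first attempt will be the Galois-descent argument combined with Theorem~\ref{thm:Krull}, applied iteratively in $d$.
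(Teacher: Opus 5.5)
There is a genuine gap, located at the step you yourself flag as the main obstacle: part (b). The stability statement and part (c), which you derive from (b), inherit it. Your route needs $\AR_{\mathbb{K}}(f)\le C_d\GR(f)$ via Galois descent, and both supports of that plan fail.

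First, the claim that a multilinear variety defined over an infinite $\mathbb{K}$ and containing a rational point has Zariski-dense rational points is false. The paper's Example~\ref{example1} refutes it: for a central division algebra $\mathcal{D}$, the variety $Z_F=\{\alpha\beta=0\}$ is cut out by $\mathbb{K}$-bilinear equations, yet $\codim\overline{Z_F(\mathbb{K})}=r^2>\lceil 3r^2/4\rceil=\GR(F)$. Concretely, take $\mathcal{D}$ to be the real quaternions. The unique component of $Z_F$ of codimension $3=\GR(F)$ is the closure of $\{\alpha\beta=0,\ \rank\alpha=1\}$ in $\mathbb{C}^{2\times2}\times\mathbb{C}^{2\times2}$. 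It is stable under complex conjugation, being the unique component of minimal codimension. Nevertheless its only real point is the origin, because nonzero quaternions are invertible. So the obstruction is not the field of definition of a component but the scarcity of rational points on it, and descending components does not touch that.

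Second, Theorem~\ref{thm:Krull} bounds codimension by $2^d$ times the number of $\mathbb{K}$-rational multilinear equations. For $Z_{F_f}$ that number is $n_d$, not $O(\GR(f))$. Producing $O(\GR(f))$ $\mathbb{K}$-rational multilinear equations whose zero set lies in $Z_{F_f}$ is essentially the theorem you are trying to prove. Intersecting Galois conjugates of a component multiplies codimension by up to the degree of its field of definition, which is unbounded.

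The missing idea is the paper's direct-sum amplification, which avoids any search for rational points.
\begin{enumerate}
\item Since $Z_{F_{f\oplus g}}(\mathbb{K})=Z_{F_f}(\mathbb{K})\times Z_{F_g}(\mathbb{K})$, $\AR_{\mathbb{K}}$ is exactly additive under direct sums (Proposition~\ref{prop-6}).
\item Combined with $\AR_{\mathbb{K}}\le\PR_{\mathbb{K}}\le(2^{d-1}-1)\AR_{\mathbb{K}}$, this gives $\PR_{\mathbb{K}}(f^{\oplus k})\ge\frac{k}{2^{d-1}-1}\PR_{\mathbb{K}}(f)$ (Proposition~\ref{prop-9}).
\item If $\PR_{\mathbb{L}}(f)=r$ for a finite extension $\mathbb{L}$ of degree $m$, the restriction-of-scalars bound cited from \cite{chen2024stability} gives $\PR_{\mathbb{K}}(f^{\oplus k})\le(2m-1)r$ with $k=\lceil(m-1)/2\rceil$. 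This cited input is where perfectness enters.
\item The degree $m$ cancels, yielding $\PR_{\mathbb{K}}(f)\le 6(2^{d-1}-1)\PR_{\overline{\mathbb{K}}}(f)\le 6(2^{d-1}-1)^2\GR(f)$.
\end{enumerate}

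Part (c) then follows from Lemma~\ref{lem:str-pr}, Lemma~\ref{lem:brk-str} over $\overline{\mathbb{K}}$, and the argument of \cite[Lemma~2.12]{baily2024strength}. The factor $m$ and the additive $m-1$ arise from passing from a $\overline{\mathbb{K}}$-combination of low Birch rank to a $\mathbb{K}$-rational combination of low strength. Your ``generic linear combination'' remark skips this step; collective strength is a minimum over rational combinations, not a generic one.

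In part (a) there are two smaller problems.
\begin{itemize}
\item Your density justification of $\AR_{\mathbb{K}}\le\PR_{\mathbb{K}}$ fails for the same reason. For $f=\sum_i(\alpha\beta)_i\gamma_i$, the variety $\{g_1=\cdots=g_r=0\}$ is exactly the one in Example~\ref{example1}. The inequality itself is true; the paper proves it from subadditivity of $\AR_{\mathbb{K}}$, which rests on slicing independence (Proposition~\ref{prop-2}, hence on Theorem~\ref{thm:codim}), together with induction on $d$ for partition-rank-one tensors.
\item Theorem~\ref{thm:Krull} cannot be applied to $W_{r,F}$ through minors, since minors are not multilinear. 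The bound $\PR_{\mathbb{K}}\le(2^{d-1}-1)\AR_{\mathbb{K}}$ is a known result that the paper simply cites from \cite{kazhdan2024schmidt}.
\end{itemize}
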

The linear equivalences in \ref{thm:linear:item1}--\ref{thm:linear:item3} are sequentially proved in Theorems~\ref{thm:pvsa}, \ref{thm:pvsg} and \ref{thm:collective str-birc}. Moreover,  all constants depending on $d$ and $m$ in \ref{thm:linear:item1}-\ref{thm:linear:item3} are explicitly determined.  

We conclude this subsection by mentioning several existing results related to \ref{thm:linear:item1}-\ref{thm:linear:item3}.  Suppose that $\mathbb{K}$ is an infinite field.  According to \cite[Theorem~A.1]{LZ24} and the proof of Theorem~2.3 in \cite{baily2024strength},  we have $\PR_{\mathbb{K}} (f) = O_d(\AR_{\mathbb{K}} (f))$ and $\PR_{\mathbb{K}} (f) \asymp_{d,\mathbb{K}} \GR(f)$,  which are respectively improved by \ref{thm:linear:item1} and \ref{thm:linear:item2}.  In particular,  \ref{thm:linear:item1} serves as an analogue of Conjecture~\ref{conj:pvsa} over infinite fields,  and \ref{thm:linear:item2} completely removes the dependence on $\mathbb{K}$ in the equivalence $\PR_{\mathbb{K}} (f) \asymp_{d,\mathbb{K}} \GR(f)$. By \cite[Theorem~1.4]{baily2024strength}, it holds that $\str_{\mathbb{K}} (P) \asymp_{d,\mathbb{K}} \Brk (P)$.  The $m = 1$ case of the linear equivalence in \ref{thm:linear:item3} removes the field dependence, and it resolves Conjecture~\ref{conj:svsb} for perfect infinite fields.

\subsubsection{Abundance of rational points} Let $P_1,\dots,  P_m \in \mathbb{K}[x_1,\dots,  x_n]$ be homogeneous polynomials of degrees $e_1,\dots,  e_m$,  respectively.  As generalizations of the celebrated theorems of Brauer \cite{Brauer45} and Birch \cite{Birch57}, it has recently been proved in \cite[Theorem~1.3]{AJA24} and \cite[Theorem~1.3]{AA24} that
\begin{equation}\label{eq:BrauerBirch}
\dim Z_{P_1,\dots, P_m} - \dim \overline{Z_{P_1,\dots, P_m}(\mathbb{K})} \le C(e_1,\dots, e_m)
\end{equation}
for some function $C$,  provided that $\mathbb{K}$ is either an infinite Brauer field or a Birch field of characteristic zero. We refer the reader to \cite{AA24} for the precise definitions of Brauer and Birch fields.  It is noteworthy that neither \cite[Theorem~1.3]{AJA24} nor \cite[Theorem~1.3]{AA24} explicitly determines the function $C$,  and that \eqref{eq:BrauerBirch} provides an upper bound for the relative codimension of $\overline{Z_{P_1,\dots, P_m}(\mathbb{K})} $,  rather than for $\codim \overline{Z_{P_1,\dots, P_m}(\mathbb{K})}$.  As a consequence of Theorem~\ref{thm:Krull},  we immediately have the following strengthened and effective version of \eqref{eq:BrauerBirch} for multilinear varieties.
\begin{corollary}[Abundance of rational points]
Let $\mathbb{K}$ be an infinite field. Suppose that $f_1,\dots,  f_m: \mathbb{K}^{n_1} \times \cdots \times \mathbb{K}^{n_d} \to \mathbb{K}$ are functions such that $\mdeg f_1,  \dots,  \mdeg f_m \le (1,\dots, 1)$.  Then for each irreducible component $W$ of $\overline{Z_{f_1,\dots, f_m}(\mathbb{K})}$,  we have $\codim W \le 2^d m$. 
\end{corollary}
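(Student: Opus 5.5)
This corollary follows from Theorem~\ref{thm:Krull} applied with $c_1=\cdots=c_m=0$, combined with a choice of base point that lies only on the given component.

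First, we reduce to a rational point lying only in $W$. Write $X \coloneqq \overline{Z_{f_1,\dots,f_m}(\mathbb{K})}$ and let $X = W \cup W_2 \cup \cdots \cup W_s$ be its decomposition into irreducible components. The rational points $Z(\mathbb{K}) \coloneqq Z_{f_1,\dots,f_m}(\mathbb{K})$ are Zariski dense in $X$ by definition of the closure. We claim that $Z(\mathbb{K}) \cap W$ is not contained in $W_2\cup\cdots\cup W_s$. Suppose it were. Then $Z(\mathbb{K}) \subseteq \overline{Z(\mathbb{K})\setminus W} \cup (W_2\cup\cdots\cup W_s)$, a closed set. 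This would force $X = W_2\cup\cdots\cup W_s$, contradicting irredundancy of the decomposition. So we may fix $\mathsf{v} \in Z(\mathbb{K}) \cap W$ with $\mathsf{v} \notin W_j$ for all $j \ge 2$.

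Next, we apply Theorem~\ref{thm:Krull} at $\mathsf{v}$ with $c_1=\cdots=c_m=0$. This gives a set $W' \subseteq Z(\mathbb{K})$ with $\mathsf{v}\in\overline{W'}$, $\overline{W'}$ irreducible, and $\codim \overline{W'} \le 2^d m$. Since $W' \subseteq Z(\mathbb{K})$, we have $\overline{W'} \subseteq X$. Because $\overline{W'}$ is irreducible, it lies in a single component of $X$. That component contains $\mathsf{v}$, so by the choice of $\mathsf{v}$ it must be $W$. Hence $\overline{W'} \subseteq W$ and $\codim W \le \codim \overline{W'} \le 2^d m$.

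The only point needing care is the density argument in the first step, namely producing a rational point on $W$ that avoids the other components. This is a routine topological fact about closures and irredundant decompositions, so I expect no genuine obstacle. All the real content is carried by Theorem~\ref{thm:Krull}.
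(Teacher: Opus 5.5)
Your argument is correct and is essentially the deduction the paper intends when it calls the corollary an immediate consequence of Theorem~\ref{thm:Krull}. The only detail the paper leaves unwritten is choosing a rational point of $W$ off the other components; you supply it correctly (it also follows from Lemma~\ref{lem-1}, since $Z_{f_1,\dots,f_m}(\mathbb{K})\cap W$ is dense in $W$), and that choice forces the irreducible $\overline{W'}$ into $W$, so $\codim W\le 2^d m$.
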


\subsection{Organization} We establish some basic results in Section~\ref {sec:prelim}, which are essential ingredients in our proofs of Theorems~\ref {thm:codim} and \ref {thm:Krull} in Section~\ref {sec:geometry}.  Proofs of the aforementioned applications are presented in Section~\ref{sec:app},  along with further qualitative results concerning ranks of tensors and polynomials. 

\section{Preliminary lemmas}\label{sec:prelim}
This section is devoted to gathering preliminary facts for subsequent use.  While several results are well-known or straightforward to experts, we supply full proofs for the sake of completeness.
\subsection{Properties of the Zariski topology}
We first record two basic facts of irreducible subsets in the following lemma.   
\begin{lemma}\label{additional lemma 1}
Let $\mathbb{K}$ be an infinite field and let $S$ be a subset of $\mathbb{K}^{n}$.  Suppose that $\overline{S}$ is irreducible.  
\begin{enumerate}[label = (\alph*)]
\item For any non-empty open subset $U\subseteq \overline{S}$,  we have $\overline{U\cap S}=\overline{S}$.
\item If $\varphi: \mathbb{K}^n \mydashrightarrow \mathbb{K}^m$ is a rational map such that $\varphi$ is defined on $S$,  then $\overline{ \varphi (S)}$ is irreducible.
\end{enumerate}
\end{lemma}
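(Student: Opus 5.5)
Both parts are point-set statements about the Zariski topology on $\overline{\mathbb{K}}^n$. The plan is to work directly with the definition \eqref{eq:Xclosure}: $\overline{S}$ is the common zero locus in $\overline{\mathbb{K}}^n$ of $\mathcal{I}(S)$, the polynomials with coefficients in $\mathbb{K}$ that vanish on $S$. Two consequences of this description will be used throughout.

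\begin{itemize}
\item $\overline{S}$ is the smallest closed set, cut out by $\mathbb{K}$-polynomials, that contains $S$.
\item Since $\mathbb{K}$ is infinite and $S\subseteq\mathbb{K}^n$, this coincides with the Zariski closure of $S$ in $\overline{\mathbb{K}}^n$ in the usual sense (using all $\overline{\mathbb{K}}$-polynomials). Indeed, if $g\in\overline{\mathbb{K}}[x]$ vanishes on $S$, write $g=\sum_i \lambda_i g_i$ with $\lambda_i$ linearly independent over $\mathbb{K}$ and $g_i\in\mathbb{K}[x]$. Evaluating at points of $S\subseteq\mathbb{K}^n$ shows each $g_i$ vanishes on $S$.
\end{itemize}

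So irreducibility and closures can be handled with ordinary topological arguments in $\overline{\mathbb{K}}^n$.

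\textbf{Part (a).} Write $U=\overline{S}\setminus C$ with $C$ closed. Let $T=\overline{U\cap S}$, which is contained in $\overline{S}$. The key step is that $S\subseteq T\cup C$: a point of $S$ either lies in $C$ or lies in $U\cap S$. The set $T\cup C$ is closed, so taking closures gives $\overline{S}\subseteq T\cup(C\cap\overline{S})$. Irreducibility of $\overline{S}$ then forces either $\overline{S}=T$, or $\overline{S}\subseteq C$. The second case would make $U$ empty, which is excluded. Hence $\overline{U\cap S}=\overline{S}$.

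\textbf{Part (b).} Write $\varphi=(p_1/q_1,\dots,p_m/q_m)$ with $p_i,q_i\in\mathbb{K}[x]$, where "defined on $S$" means $q_i(\mathsf{s})\neq 0$ for all $\mathsf{s}\in S$. Let $D\subseteq\overline{\mathbb{K}}^n$ be the open set where all $q_i$ are nonzero. Then $\varphi$ is a continuous map $D\to\overline{\mathbb{K}}^m$ in the Zariski topology, and $\overline{S}\cap D$ is a nonempty open subset of $\overline{S}$, hence irreducible. Continuity gives
\[
\varphi(S)\subseteq\varphi(\overline{S}\cap D)\subseteq\overline{\varphi(S)}.
\]
For the second inclusion, note that $\varphi^{-1}\bigl(\overline{\varphi(S)}\bigr)$ is a closed subset of $D$ containing $S$, so it contains $\overline{S}\cap D$. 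Therefore $\overline{\varphi(S)}=\overline{\varphi(\overline{S}\cap D)}$. This is the closure of the continuous image of an irreducible space, hence irreducible.

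\textbf{Main obstacle.} The only delicate step is the identification of the $\mathbb{K}$-defined closure with the usual $\overline{\mathbb{K}}$-Zariski closure. Without it, "irreducible" could be ambiguous. The linear-independence argument in the second bullet, which uses $S\subseteq\mathbb{K}^n$, handles this cleanly. Everything after that is standard topology.
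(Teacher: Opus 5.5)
Your proof is correct and is essentially the paper's: (a) is the direct form of the paper's contradiction argument that a nonempty open subset of the irreducible set $\overline{S}$ must meet $S$, and (b) is the same continuity argument, routed through $\overline{S}\cap D$ rather than through $S$ itself (the paper simply notes that $S$ is irreducible because $\overline{S}$ is). Your identification of the $\mathbb{K}$-defined closure with the ordinary $\overline{\mathbb{K}}$-Zariski closure is a point the paper leaves implicit; it is valid as written, and the linear-independence argument does not actually need $\mathbb{K}$ to be infinite.
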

\begin{proof}
We denote $Z \coloneqq \overline{S}$.  
\begin{enumerate}[label = (\alph*)]
\item Suppose on the contrary that $W \coloneqq \overline{U\cap S} \subsetneq Z$.  Since $U \cap S \subseteq W$, $(Z \setminus W) \cap U \cap S = \varnothing$.  However,  this is not possible since $(Z \setminus W) \cap U$ is an open dense subset of $Z = \overline{S}$.
\item Let $E \subseteq \mathbb{K}^n$ be the indeterminacy of $\varphi$.  Then $\varphi$ is a regular map on the open subset $U \coloneqq \mathbb{K}^n \setminus E$.  Since $\overline{S}$ is irreducible,  so is $S$.  The continuity of $\varphi$ on $S \subseteq U$ implies the irreducibility of $\varphi(S)$,  from which we obtain the irreducibility of $\overline{\varphi(S)}$.  
\end{enumerate}
\end{proof}

Next,  we establish Proposition~\ref{prop:dimension} which compares the dimension of two varieties via rational maps.  To achieve this,  we need the lemma that follows.  
\begin{lemma}\label{lem-1}
Let $\mathbb{K}$ be an algebraically closed field and let $S \subseteq\mathbb{K}^n$ be a subset. If $Z \coloneqq \overline{S} =\bigcup_{i \in [s]} Z_i$ is the irreducible decomposition of $Z$, then $\overline{S \cap Z_k}= Z_k$ for each $k \in [s]$.
\end{lemma}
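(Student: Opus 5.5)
The plan is to show the two inclusions $\overline{S \cap Z_k} \subseteq Z_k$ and $Z_k \subseteq \overline{S \cap Z_k}$. The first is immediate, since $S \cap Z_k \subseteq Z_k$ and $Z_k$ is closed. The argument for the reverse inclusion is a standard decomposition of $S$ along the components, followed by an appeal to the irredundancy of the irreducible decomposition.

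\begin{enumerate}[label=(\arabic*)]
\item Since $Z = \bigcup_{i \in [s]} Z_i$ and $S \subseteq Z$, we can write $S = \bigcup_{i \in [s]} (S \cap Z_i)$.
\item Taking closures, and using that the closure of a finite union is the union of the closures, gives
\[
Z = \overline{S} = \bigcup_{i \in [s]} \overline{S \cap Z_i}.
\]
\item Each $\overline{S \cap Z_i}$ is a closed subset of $Z_i$. Intersecting the displayed equality with $Z_k$ yields
\[
Z_k = \bigcup_{i \in [s]} \bigl( \overline{S \cap Z_i} \cap Z_k \bigr),
\]
which expresses $Z_k$ as a finite union of closed subsets.
\item Because $Z_k$ is irreducible, some term equals $Z_k$. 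So there is an $i$ with $Z_k \subseteq \overline{S \cap Z_i} \subseteq Z_i$.
\item The decomposition is irredundant, meaning no component is contained in another. Hence $i = k$, and therefore $Z_k = \overline{S \cap Z_k}$.
\end{enumerate}

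The only point needing care is the meaning of "the irreducible decomposition": it must be read as the irredundant decomposition into irreducible components, so that $Z_k \subseteq Z_i$ forces $i = k$. I would state this explicitly. Algebraic closedness is not really used beyond the standard existence of such a decomposition for closed subsets of $\mathbb{K}^n$. No serious obstacle is expected.
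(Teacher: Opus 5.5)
Your proposal is correct and follows essentially the same argument as the paper: decompose $S$ along the components, use that the closure of a finite union is the union of closures, invoke the irreducibility of $Z_k$ to get $Z_k \subseteq \overline{S \cap Z_i} \subseteq Z_i$ for some $i$, and conclude $i = k$ by irredundancy. Your explicit remark that "the irreducible decomposition" must mean the irredundant one is exactly the fact the paper uses implicitly when it concludes $j_k = k$.
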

\begin{proof}
For each $k \in [s]$,  it is clear that $\overline{S \cap Z_k}\subseteq Z_k$.  Thus,  it suffices to prove the reversed inclusion.  We observe that
\[
Z_k \subseteq   Z =  \overline{\bigcup_{j \in [s]} (S \cap Z_j)}=\bigcup_{j \in [s]} \overline{S \cap Z_j}.
\]
The irreducibility of $Z_k$ implies that $Z_k\subseteq\overline{S \cap Z_{j_k}} \subseteq Z_{j_k}$ for some $j_k \in [s]$.  Thus,  we obtain $j_k = k$ and $Z_k = \overline{S \cap Z_k}$.
\end{proof}
\begin{proposition}[Dimension comparison]\label{prop:dimension}
Let $\mathbb{K}$ be an algebraically closed field.  Suppose that $S \subseteq \mathbb{K}^n$,  $T \subseteq \mathbb{K}^m$ are two subsets.  If $\varphi:\mathbb{K}^n \mydashrightarrow \mathbb{K}^m$ is a rational map such that $\varphi$ is defined on $S$ and $\varphi(S)= T$,  then $\dim(\overline{S})\ge\dim(\overline{T})$.
\end{proposition}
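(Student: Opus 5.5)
We plan to reduce to the irreducible case using Lemma~\ref{lem-1} and then compare the two closures through the dominant regular map $\varphi$ restricted to a component. Write $Z \coloneqq \overline{S} = \bigcup_{i \in [s]} Z_i$ for the irreducible decomposition. Since $\varphi$ is defined on $S$, it is a morphism on the open set $U \coloneqq \mathbb{K}^n \setminus E$, where $E$ is the indeterminacy locus, and $S \subseteq U$.

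First we note the identity
\[
T = \varphi(S) = \bigcup_{i \in [s]} \varphi(S \cap Z_i), \qquad \text{hence} \qquad \overline{T} = \bigcup_{i \in [s]} \overline{\varphi(S \cap Z_i)}.
\]
It follows that $\dim \overline{T} = \max_i \dim \overline{\varphi(S\cap Z_i)}$. Since $\dim \overline{S} = \max_i \dim Z_i$, it suffices to show, for each fixed $i$, that $\dim \overline{\varphi(S\cap Z_i)} \le \dim Z_i$.

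Fix $i$. By Lemma~\ref{lem-1}, $\overline{S\cap Z_i} = Z_i$, and $S \cap Z_i \subseteq Z_i \cap U$. Thus $Z_i \cap U$ is a nonempty open, hence dense, subset of the irreducible variety $Z_i$, and $\varphi|_{Z_i\cap U}$ is a morphism. By continuity of $\varphi$ on $U$,
\[
\varphi(Z_i \cap U) = \varphi\bigl(\overline{S\cap Z_i}\cap U\bigr) \subseteq \overline{\varphi(S\cap Z_i)},
\]
and trivially $\varphi(S \cap Z_i)\subseteq \varphi(Z_i\cap U)$. Hence $\overline{\varphi(S\cap Z_i)} = \overline{\varphi(Z_i\cap U)} =: Y_i$, which is irreducible, being the closure of the image of an irreducible set. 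The morphism $Z_i \cap U \to Y_i$ is then dominant between irreducible varieties.

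The key step, and the only real input, is the standard fact that a dominant morphism $X \to Y$ of irreducible varieties satisfies $\dim Y \le \dim X$. The cleanest argument is that dominance gives an injection of function fields $\mathbb{K}(Y) \hookrightarrow \mathbb{K}(X)$, so $\operatorname{trdeg} \mathbb{K}(Y) \le \operatorname{trdeg}\mathbb{K}(X)$. Alternatively, one can cite the fiber dimension theorem \cite[Exercise~3.22]{hartshorne2013algebraic}. Combined with $\dim (Z_i\cap U) = \dim Z_i$, this gives $\dim Y_i \le \dim Z_i \le \dim \overline{S}$, and taking the maximum over $i$ yields the claim. I expect no serious obstacle. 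The point needing care is that the closure of $\varphi(S\cap Z_i)$ equals that of $\varphi(Z_i\cap U)$, which is exactly what Lemma~\ref{lem-1} provides.
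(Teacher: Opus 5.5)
Your proof is correct and follows essentially the same route as the paper: decompose $\overline{S}$ into irreducible components $Z_i$, use Lemma~\ref{lem-1} to see that each $Z_i$ meets the domain of definition $U$, and apply the dimension inequality for the dominant morphism $Z_i \cap U \to \overline{\varphi(Z_i \cap U)}$. The one difference is that you prove the equality $\overline{\varphi(S\cap Z_i)} = \overline{\varphi(Z_i\cap U)}$, whereas the paper uses only the trivial inclusion $\overline{\varphi(S)} \subseteq \bigcup_i \overline{\varphi(Z_i\cap U)}$, which already suffices.
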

\begin{proof}
Denote $Z \coloneqq \overline{S}$ and $W \coloneqq \overline{T}$.  Assume that $Z = \bigcup_{j \in [s]} Z_j$ is the irreducible decomposition of $Z$ and that $E \subseteq \mathbb{K}^{n}$ is the indeterminacy locus of $\varphi$.  By definition,  $E$ is a Zariski closed subset of $\mathbb{K}^n$,  and $S \subseteq Z \setminus E$.

For each $j \in [s]$, Lemma~\ref{lem-1} implies that $S \cap Z_j \neq \varnothing$ .  Therefore,  we may conclude that $Z_j \setminus E$ is an open dense subset of $Z_i$.  Moreover,  $\varphi: Z_j \setminus E \to \overline{\phi(Z_j \setminus E)}$ is regular and dominant.  Consequently,  we obtain $\dim Z_j  = \dim (Z_j \setminus E) \ge \dim \overline{\varphi(Z_j \setminus E)}$. We notice that 
\[
W =\overline{T}=\overline{\varphi(S)}\subseteq\overline{\varphi(Z \setminus E)}=\overline{\varphi\Big(
\bigcup_{j \in [s]}(Z_j\setminus E)\Big)} = \bigcup_{j \in [s]}\overline{\varphi\left(Z_j \setminus E\right)},
\]
which implies
    \[
    \dim W \le \max_{j \in [s]} \big\{ \dim \overline{\varphi(Z_j\setminus E)} \big\} \le \max_{j \in [s]} \big\{ \dim Z_j \big\} =\dim(Z).  \qedhere
    \]
\end{proof}
As a direct consequence of Proposition~\ref{prop:dimension},  we have the following:
\begin{corollary}\label{cor-1}
Let $\mathbb{K}$ be an algebraically closed field.  Suppose that $S \subseteq\mathbb{K}^n$ and $T\subseteq\mathbb{K}^m$ are subsets,  and there exist rational maps $\varphi: \mathbb{K}^n \mydashrightarrow \mathbb{K}^m$ and $\psi:\mathbb{K}^m \mydashrightarrow  \mathbb{K}^n$ such that 
\begin{enumerate}[label = (\alph*)]
\item $\varphi$ is defined on $S$ and $\psi$ is defined on $T$.
\item $\varphi(S)=T$ and $\psi(T)=S$.
\end{enumerate}
Then it holds that $\dim \overline{S}=\dim \overline{T}$.
\end{corollary}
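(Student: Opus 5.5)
This corollary follows by applying Proposition~\ref{prop:dimension} twice, once in each direction.

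First, apply Proposition~\ref{prop:dimension} to the triple $(S, T, \varphi)$. By hypothesis~(a), $\varphi$ is defined on $S$, and by hypothesis~(b), $\varphi(S) = T$. So all assumptions of the proposition are met, and we get $\dim \overline{S} \ge \dim \overline{T}$.

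Second, apply the same proposition to $(T, S, \psi)$, viewing $\psi$ as a rational map $\mathbb{K}^m \mydashrightarrow \mathbb{K}^n$. Hypothesis~(a) gives that $\psi$ is defined on $T$, and hypothesis~(b) gives $\psi(T) = S$. Hence $\dim \overline{T} \ge \dim \overline{S}$.

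Combining the two inequalities yields $\dim \overline{S} = \dim \overline{T}$.

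The only point to check is that Proposition~\ref{prop:dimension} was stated for arbitrary ambient dimensions $n$ and $m$ and arbitrary subsets, with no irreducibility assumption on the closures. It was, so the two applications are symmetric. Nothing else needs verifying: the proposition only requires that the map be defined on the source set and surject onto the target set, which is exactly what (a) and (b) supply in each direction. There is no real obstacle here.
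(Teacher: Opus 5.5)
Your proof is correct and is exactly the paper's argument: the paper records the corollary as a direct consequence of Proposition~\ref{prop:dimension}, which you apply once to $(S,T,\varphi)$ and once to $(T,S,\psi)$ to obtain the two opposite inequalities $\dim \overline{S} \ge \dim \overline{T}$ and $\dim \overline{T} \ge \dim \overline{S}$.
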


\subsection{Geometry of fixed rank matrices}
This subsection is concerned with the geometry of matrices of fixed rank.  Let $\mathbb{K}$ be a field and let $n,m$ be two positive integers.  Suppose that $I$ and $J$ are subsets of $[n]$ and $[m]$ such that $|I| = |J| = r \le \min \{n,m\}$.  Denote 
\begin{align}
        Z(I,J) &\coloneqq \{(\mathsf{M},  \mathsf{v})\in \mathbb{K}^{n\times m} \times \mathbb{K}^{m}: \mathsf{M} \mathsf{v}=0,\; \rank \mathsf{M} = \rank \mathsf{M}_{I,J} =  r\},\label{eq:ZIJ}\\
        W(I,J) &\coloneqq \{\mathsf{M} \in \mathbb{K}^{n\times m}:  \rank \mathsf{M} = \rank \mathsf{M}_{I,J} =  r \}.  \label{eq:WIJ}
    \end{align} 

\begin{proposition}[Fixed rank matrices]\label{prop:fixed rank}
There are rational maps $\varphi: \mathbb{K}^{n\times m} \times \mathbb{K}^{m} \mydashrightarrow \mathbb{K}^{n\times m} \times \mathbb{K}^{m - r}$ and $\psi: \mathbb{K}^{n\times m} \times \mathbb{K}^{m - r} \mydashrightarrow \mathbb{K}^{n\times m} \times \mathbb{K}^{m}$ satisfying the following properties: 
\begin{enumerate}[label = (\alph*)]
\item $\varphi$ is defined on $Z(I,J)$ and $\psi$ is defined on $W(I,J) \times \mathbb{K}^{m -r}$. 
\item $\varphi(Z(I,J)) = W(I,J) \times \mathbb{K}^{m - r}$ and $\psi(W(I,J) \times \mathbb{K}^{m-r}) = Z(I,J)$.
\item $\psi \circ \varphi|_{Z(I,J)} = \Id_{Z(I,J)}$ and $\varphi \circ \psi|_{W(I,J) \times \mathbb{K}^{m - r}} = Z(I,J)$.
\end{enumerate}
\end{proposition}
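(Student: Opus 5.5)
We construct $\varphi$ and $\psi$ explicitly by solving $\mathsf{M}\mathsf{v}=0$ with Cramer's rule on the invertible minor $\mathsf{M}_{I,J}$. Write $J^c \coloneqq [m]\setminus J$, so $|J^c| = m-r$, and identify $\mathbb{K}^{m-r}$ with the coordinates indexed by $J^c$. For $\mathsf{v}\in\mathbb{K}^m$ write $\mathsf{v}=(\mathsf{v}_J,\mathsf{v}_{J^c})$. Define
\[
\varphi(\mathsf{M},\mathsf{v}) \coloneqq (\mathsf{M},\mathsf{v}_{J^c}),
\]
which is a polynomial (hence everywhere defined) map. Define
\[
\psi(\mathsf{M},\mathsf{w}) \coloneqq \bigl(\mathsf{M},\mathsf{u}\bigr),\qquad \mathsf{u}_{J^c} = \mathsf{w},\quad \mathsf{u}_J = -\mathsf{M}_{I,J}^{-1}\mathsf{M}_{I,J^c}\mathsf{w}.
\]
Since $\mathsf{M}_{I,J}^{-1} = \adj(\mathsf{M}_{I,J})/\det \mathsf{M}_{I,J}$, the map $\psi$ is rational with indeterminacy contained in $\{\det \mathsf{M}_{I,J}=0\}$. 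It is therefore defined on $W(I,J)\times\mathbb{K}^{m-r}$, because $\rank\mathsf{M}_{I,J}=r$ there. This gives (a).

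For (b) and (c), the key linear-algebra fact is the following. If $\rank \mathsf{M} = \rank \mathsf{M}_{I,J} = r$, then the rows indexed by $I$ span the row space of $\mathsf{M}$. Indeed, the $r$ rows $\mathsf{M}_{I,[m]}$ are linearly independent, since their restriction to the columns $J$ is invertible, and the whole row space has dimension $r$. Hence
\[
\ker \mathsf{M} = \ker \mathsf{M}_{I,[m]} = \{\mathsf{u} : \mathsf{M}_{I,J}\mathsf{u}_J + \mathsf{M}_{I,J^c}\mathsf{u}_{J^c} = 0\},
\]
and this kernel is exactly the graph $\mathsf{u}_J = -\mathsf{M}_{I,J}^{-1}\mathsf{M}_{I,J^c}\mathsf{u}_{J^c}$ over the free coordinates $\mathsf{u}_{J^c}$.

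We now verify (b) and (c) from this description.

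\textbf{$\psi$ lands in $Z(I,J)$.} For $(\mathsf{M},\mathsf{w})\in W(I,J)\times\mathbb{K}^{m-r}$, the vector $\mathsf{u}$ satisfies the $I$-row equations by construction. By the fact above, it then lies in $\ker\mathsf{M}$. The matrix $\mathsf{M}$ is unchanged, so $\psi(\mathsf{M},\mathsf{w}) \in Z(I,J)$.

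\textbf{$\varphi$ lands in $W(I,J)\times\mathbb{K}^{m-r}$.} This is immediate, since $(\mathsf{M},\mathsf{v})\in Z(I,J)$ forces $\mathsf{M}\in W(I,J)$.

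\textbf{$\psi\circ\varphi = \Id_{Z(I,J)}$.} For $(\mathsf{M},\mathsf{v})\in Z(I,J)$, we have $\mathsf{M}_{I,J}\mathsf{v}_J + \mathsf{M}_{I,J^c}\mathsf{v}_{J^c}=0$. Since $\mathsf{M}_{I,J}$ is invertible, $\mathsf{v}_J$ is uniquely determined by $\mathsf{v}_{J^c}$, so $\psi(\mathsf{M},\mathsf{v}_{J^c}) = (\mathsf{M},\mathsf{v})$.

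\textbf{$\varphi\circ\psi = \Id$.} On $W(I,J)\times\mathbb{K}^{m-r}$ we have $\varphi\circ\psi = \Id_{W(I,J)\times\mathbb{K}^{m-r}}$, since $\psi$ keeps $\mathsf{w}$ as the $J^c$-coordinates. I read the statement's ``$= Z(I,J)$'' in (c) as a typo for this identity.

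\textbf{Both images are onto.} The two composition identities immediately give the surjectivity claims in (b): $\varphi(Z(I,J)) = W(I,J)\times\mathbb{K}^{m-r}$ and $\psi(W(I,J)\times\mathbb{K}^{m-r}) = Z(I,J)$.

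The only step with real content is the row-span argument, which shows that the $|I|$ equations already cut out $\ker\mathsf{M}$. Everything else is bookkeeping.
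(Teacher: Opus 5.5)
Your proposal is correct and follows essentially the same route as the paper: $\varphi$ projects onto the $J^c$-coordinates, and $\psi$ solves the $I$-row equations using $\adj(\mathsf{M}_{I,J})/\det\mathsf{M}_{I,J}$ (the paper first reduces to $I=J=[r]$, while you index by general $I,J$). Your row-span argument justifies $\psi(W(I,J)\times\mathbb{K}^{m-r})\subseteq Z(I,J)$, which the paper only calls ``clear,'' and your reading of (c) as $\varphi\circ\psi=\Id_{W(I,J)\times\mathbb{K}^{m-r}}$ matches what the paper's proof actually establishes.
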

\begin{proof}
Without loss of generality,  we may assume that $I = J =[r]$.  We partition each $\mathsf{M} \in \mathbb{K}^{n\times m}$ and $v \in \mathbb{K}^m$ as
\begin{equation}\label{prop:fixed rank:eq1}
    \mathsf{M} =\begin{bmatrix}
        \mathsf{M}_1 & \mathsf{M}_2\\
        \mathsf{M}_3& \mathsf{M}_4
    \end{bmatrix},  \quad \mathsf{v}=\begin{bmatrix}
        v_1\\
        v_2
    \end{bmatrix}, 
 \end{equation}
where $\mathsf{M}_1 \in \mathbb{K}^{r \times r}$,  $\mathsf{v}_1 \in \mathbb{K}^r$,  $\mathsf{v}_2 \in \mathbb{K}^{m- r}$,  and $\mathsf{M}_2$,  $\mathsf{M}_3$,  $\mathsf{M}_4$ are matrices of appropriate sizes.  We define 
\begin{align*}
&\varphi: \mathbb{K}^{n\times m} \times \mathbb{K}^{m} \myto \mathbb{K}^{n\times m} \times \mathbb{K}^{m - r},\quad \varphi(\mathsf{M},\mathsf{v}) = (\mathsf{M},  \mathsf{v}_2),  \\
&\psi: \mathbb{K}^{n\times m} \times \mathbb{K}^{m - r} \mydashrightarrow \mathbb{K}^{n\times m} \times \mathbb{K}^{m},\quad \psi(\mathsf{M},  \mathsf{w}) = \Big( \mathsf{M},\begin{bmatrix}
        -\det(\mathsf{M}_1)^{-1} \adj(\mathsf{M}_{1}) \mathsf{M}_2 \mathsf{w}\\
        \mathsf{w}
    \end{bmatrix} \Big).
\end{align*}
It is clear that $\psi$ is defined on $W([r],[r])\times \mathbb{K}^{m - r}$,  $\varphi \circ \psi|_{W([r],[r])\times \mathbb{K}^{m - r}} = \Id_{W([r],[r])\times \mathbb{K}^{m - r}}$ and $\psi(W([r],[r]) \times \mathbb{K}^{m-r}) \subseteq Z([r],[r])$.  For each $(\mathsf{M},\mathsf{v}) \in Z([r],[r])$,  we have
\[
\mathsf{v}_1 = -\mathsf{M}_1^{-1}\mathsf{M}_2 \mathsf{v}_2,\quad \rank \mathsf{M} = \rank \mathsf{M}_1 = r. 
\]
Thus,  $\psi(\mathsf{M}, \mathsf{v}_2) = (\mathsf{M},  \mathsf{v})$ and $\psi(W([r],[r]) \times \mathbb{K}^{m-r}) = Z([r],[r])$.  This implies that $\varphi (Z([r],[r])) = W([r],[r]) \times \mathbb{K}^{m-r}$ and $\psi \circ \varphi|_{Z([r],[r])} = \Id_{Z([r],[r])}$.
\end{proof}

\begin{remark}
Investigating the geometry of low rank matrices via rational maps is an important technique in the study of multilinear maps.  Results analogous to Proposition~\ref{prop:fixed rank} can be found in \cite[Lemma~4.1]{cohen2023partition} and \cite[Lemma~3.1]{moshkovitz2022quasi}. The set $W(I,J)$ in \eqref{eq:WIJ} can be described explicitly by the Schur complement.  For simplicity,  we consider $I = J  = [r]$ and partition $\mathsf{M} \in W(I,J)$ as in \eqref{prop:fixed rank:eq1}.  Then we have 
\[
    \begin{bmatrix}
        \Id_r &0\\
        -\mathsf{M}_3 \mathsf{M}_1^{-1} & \Id_{n - r}
    \end{bmatrix}     \begin{bmatrix}
        \mathsf{M}_1 & \mathsf{M}_2 \\
        \mathsf{M}_3 & \mathsf{M}_4
    \end{bmatrix} =  \begin{bmatrix}
        M_1 & M_2 \\
        0 & M_4 - M_3 M_1^{-1} M_2
    \end{bmatrix}.
\]
Since $\rank \mathsf{M}_1 = \rank \mathsf{M}$,  $\mathsf{M}_4 = \mathsf{M}_3 \mathsf{M}_1^{-1} \mathsf{M}_2$ and $\mathsf{M} = \begin{bsmallmatrix} 
\mathsf{M}_1 & \mathsf{M}_2 \\
\mathsf{M}_3 & \mathsf{M}_3 \mathsf{M}_1^{-1} \mathsf{M}_2
\end{bsmallmatrix}$.  As a consequence of Proposition~\ref{prop:fixed rank},  $Z(I,J)$ is a trivial vector bundle over $W(I,J)$ via the projection map $\pi: Z(I,J)\to W(I,J)$ defined by $\pi(\mathsf{M},v) = \mathsf{M}$.
\end{remark}

\section{Geometry of multilinear varieties}\label{sec:geometry}
Let $\mathbb{K}$ be a field.  Suppose that $f_1,\dots,  f_m: \mathbb{K}^{n_1} \times \cdots \times \mathbb{K}^{n_d} \to \mathbb{K}$ are maps such that $\mdeg f_j \le (1,\dots,  1)$,  $j \in [m]$.  We recall the following:
\begin{definition}[Multilinear variety]\label{def:multivar}\cite{GM21} 
The \emph{multilinear variety} over $\mathbb{K}$ defined by $f_1,\dots,  f_m$ is: 
\[
Z_{f_1,\dots,  f_m} (\mathbb{K}) \coloneqq \lbrace
(\mathsf{v}_1,\dots,  \mathsf{v}_d) \in \mathbb{K}^{n_1} \times \cdots \times \mathbb{K}^{n_d}:  f_j(\mathsf{v}_1,\dots,  \mathsf{v}_d) = 0,\; j \in [m]
\rbrace.
\]
Given $F = (f_1,\dots,  f_m) \in \Hom(\mathbb{K}^{n_1} \times \cdots \times \mathbb{K}^{n_d},  \mathbb{K}^{m})$,  we denote  $Z_F(\mathbb{K}) \coloneqq Z_{f_1,\dots,  f_m}(\mathbb{K})$.
\end{definition}
The purpose of this section is to establish the codimension formula (cf.  Theorem~\ref{thm:codim}) and the existence of high-dimensional subvariety (cf.  Theorem~\ref{thm:Krull}),  for multilinear varieties.

\subsection{Proof of Theorem~\ref{thm:codim}}\label{subsec:codim}
The main purpose of this subsection is to establish the formula \eqref{thm:codim:eq} for $\codim \overline{Z_F(\mathbb{K})}$. To begin with,  we present the following example to illustrate the difference between $\codim \overline{Z_F(\mathbb{K})}$ and $\height \mathcal{I}(Z_F(\mathbb{K}))$.
\begin{example}[Central division algebras]\label{example1}
Let $\mathcal{D}$ be a finite dimensional division algebra over its center $\mathbb{K}$.  By \cite[Corollary~2.1.7]{GS17},  $\dim_{\mathbb{K}} \mathcal{D} = r^2$ and $ \mathcal{D} \otimes_{\mathbb{K}} \overline{\mathbb{K}} \simeq \overline{\mathbb{K}}^{r \times r}$ as $\mathbb{K}$-algebras,  for some positive integer $r$.  Let $F: \mathcal{D} \times \mathcal{D} \to \mathcal{D}$ be the $\mathbb{K}$-bilinear map defined by $F(\alpha,  \beta) \coloneqq \alpha \beta$.  Since $\mathcal{D}$ is a division ring,  the multilinear variety defined by $F$ is 
\[
Z_F(\mathbb{K}) = \big( \mathcal{D} \times \{0\} \big) \cup \big( \{0\}  \times  \mathcal{D} \big).
\] 
Thus,  $\codim \overline{Z_F(\mathbb{K})} = r^2$.  On the other hand,  since $\mathcal{D} \otimes_{\mathbb{K}} \overline{\mathbb{K}} \simeq \overline{\mathbb{K}}^{r \times r}$,  \cite[Theorem~6.1]{kopparty2020geometric} implies $\GR(F) = \lceil 3r^2/4 \rceil$.  In particular,  $\codim \overline{Z_F(\mathbb{K})}  > \GR(F)$,  whenever $r \ge 2$.  Non-split quaternion algebras \cite[Definition~2.2.1]{Voight21} are typical such examples.  We refer the reader to \cite{Amitsur72,Tignol86,Amitsur88} for explicit examples of arbitrarily large dimensions.
\end{example}

For each $r \in \mathbb{N}$ and $F \in \Hom(\mathbb{K}^{n_1} \times \cdots \times \mathbb{K}^{n_{k}},  \mathbb{K}^m)$,  we define a subset
\begin{equation}\label{eq:ZrF}
Z_{r,F}(\mathbb{K}) \coloneqq \{ (\mathsf{v}_1,\dots,  \mathsf{v}_k) \in Z_F(\mathbb{K}): \rank F(\mathsf{v}_1,\dots,  \mathsf{v}_{k-1},\cdot) = r \},
\end{equation}
and proceed to the proof of Theorem~\ref{thm:codim}. 
\begin{proof}[Proof of Theorem~\ref{thm:codim}]
Suppose that $I$ and $J$ are subsets of $[n_d]$ and $[m]$ such that $|I| = |J| = r$.  Let $Z(I,J) \subseteq \mathbb{K}^{n_d \times m} \times \mathbb{K}^m$ and $W(I,J) \subseteq \mathbb{K}^{n_d \times m}$ be defined as in \eqref{eq:ZIJ} and \eqref{eq:WIJ},  respectively.  Suppose $\varphi: \Hom(\mathbb{K}^{n_d}, \mathbb{K}^m) \times \mathbb{K}^{n_d} \mydashrightarrow \Hom(\mathbb{K}^{n_d}, \mathbb{K}^m) \times \mathbb{K}^{n_d - r}$ and $\psi: \Hom(\mathbb{K}^{n_d}, \mathbb{K}^m) \times \mathbb{K}^{n_d - r} \mydashrightarrow \Hom(\mathbb{K}^{n_d}, \mathbb{K}^m) \times \mathbb{K}^{n_d}$ are rational maps for $Z(I,J)$ and $W(I,J)$ in Proposition~\ref{prop:fixed rank}.

We consider the maps
\begin{align*}
&\xi_F:  \mathbb{K}^{n_1} \times \cdots \times  \mathbb{K}^{n_d} \myto \Hom(\mathbb{K}^{n_d},  \mathbb{K}^m) \times \mathbb{K}^{n_d},\quad (v_1,\dots,  v_d) \mapsto (F(v_1,\dots,  v_{d-1}, \cdot),v_d),  \\
&\eta_F: \mathbb{K}^{n_1} \times \cdots \times \mathbb{K}^{n_{d -1}} \myto \Hom(\mathbb{K}^{n_d},  \mathbb{K}^m),\quad (v_1,\dots,  v_{d - 1}) \mapsto F(v_1,\dots,  v_{d-1}, \cdot). 
\end{align*}
For each $(\mathsf{v}_1,\dots,  \mathsf{v}_{d-1},  \mathsf{v}_d) \in \mathbb{K}^{n_1} \times \cdots \times \mathbb{K}^{n_d}$,  we identify $F(\mathsf{v}_1,\dots, \mathsf{v}_{d-1},\cdot) \in \Hom(\mathbb{K}^{n_d},  \mathbb{K}^m) $ with a matrix $\mathsf{M} \in \mathbb{K}^{n_k \times m}$,  and  partition $\mathsf{v}_d$ and $\mathsf{M}$ as 
\begin{align*}
\mathsf{v}_d &= \begin{bmatrix}
\mathsf{v}_{d,1} \\
\mathsf{v}_{d,2}
\end{bmatrix},  \mathsf{v}_{d,1} \in \mathbb{K}^{r},\; \mathsf{v}_{d,2} \in \mathbb{K}^{n_d - r},  \\
\mathsf{M} &= \begin{bmatrix}
\mathsf{M}_1 & \mathsf{M}_2 \\
\mathsf{M}_3 & \mathsf{M}_4
\end{bmatrix},\quad \mathsf{M}_1 \in \mathbb{K}^{r \times r}, \; \mathsf{M}_2 \in \mathbb{K}^{r \times (m-r)},\; \mathsf{M}_3 \in \mathbb{K}^{(n_d-r) \times r},\;  \mathsf{M}_4 \in \mathbb{K}^{(n_d -r) \times (m-r)}.
\end{align*}

If we denote $X_F(I,J) \coloneqq \xi_F^{-1}(Z(I,J))$ and $Y_F(I,J) \coloneqq \eta_F^{-1} (W(I,J))$,  then it is clear that 
\[
Z_{r,F}(\mathbb{K}) = \bigcup_{r,I,J} X_{r,F} (I,J),\quad W_{r,F}(\mathbb{K}) = \bigcup_{r,I,J} Y_{r,F} (I,J),
\]
where $r$ ranges over integers between $0$ and $\min\{n_d,m\}$,  and $I$ (resp.  $J$) runs through subsets of $[n_k]$ (resp.  $[m]$) of cardinality $r$.  By construction,  we also have $Z_F(\mathbb{K}) = \bigcup_{r} Z_{r,F}(\mathbb{K})$.  

We further define two maps $\Phi: \mathbb{K}^{n_1} \times \cdots \times  \mathbb{K}^{n_d} \to \mathbb{K}^{n_1} \times \cdots \times \mathbb{K}^{n_{d-1}}\times \mathbb{K}^{n_d - r} $ and $\Psi: \mathbb{K}^{n_1} \times \cdots \times \mathbb{K}^{n_{d-1}}\times \mathbb{K}^{n_d - r} \mydashrightarrow  \mathbb{K}^{n_1} \times \cdots \times  \mathbb{K}^{n_d}$ by 
\begin{align*}
\Phi(\mathsf{v}_1,\dots,  \mathsf{v}_d) &\coloneqq (\mathsf{v}_1,\dots,  \mathsf{v}_{d-1},  \mathsf{v}_{d,2}), \\ 
\Psi(\mathsf{v}_1,\dots, \mathsf{v}_{d-1},  \mathsf{w}) &\coloneqq \Big( \mathsf{v}_1,\dots,  \mathsf{v}_{d -1},  \begin{bmatrix}
-\det(\mathsf{M}_1)^{-1} \adj(\mathsf{M}_1) \mathsf{M}_2 \mathsf{w} \\
\mathsf{w}
\end{bmatrix} \Big).
\end{align*}
Then we have the following commutative diagram:
\[\begin{tikzcd}
	{ \mathbb{K}^{n_1} \times \cdots \times  \mathbb{K}^{n_d}} && {\Hom(\mathbb{K}^{n_d}, \mathbb{K}^m) \times \mathbb{K}^{n_d}} \\
	{\mathbb{K}^{n_1} \times \cdots \times \mathbb{K}^{n_{d -1}} \times \mathbb{K}^{n_d -r}} && {\Hom(\mathbb{K}^{n_d}, \mathbb{K}^m) \times \mathbb{K}^{n_d - r}}
	\arrow["{\xi_F}", from=1-1, to=1-3]
	\arrow["\Phi", dashed, from=1-1, to=2-1]
	\arrow["\varphi", dashed, from=1-3, to=2-3]
	\arrow["\Psi", shift left=3, dashed, from=2-1, to=1-1]
	\arrow["{\eta_F \times \Id_{\mathbb{K}^{n_k - r}}}"', from=2-1, to=2-3]
	\arrow["\psi", shift left=3, dashed, from=2-3, to=1-3]
\end{tikzcd}.\]
Using properties of $\varphi$ and $\psi$,  it is straightforward to verify that 
\[
\Phi(X_{r,F}(I,J)) = Y_{r,F}(I,J) \times \mathbb{K}^{n_d - r},\quad \Psi( Y_{r,F}(I,J) \times \mathbb{K}^{n_d - r} ) =X_{r,F}(I,J).
\]
According to Corollary~\ref{cor-1},  we obtain
\[
\dim \overline{X_{r,F}(I,J)} = \dim \big( \overline{Y_{r,F}(I,J)} \times \overline{\mathbb{K}}^{n_d - r} \big) = \dim \overline{Y_{r,F}(I,J)} + n_d - r.
\]
Thus,  we obtain 
\begin{align*}
\dim \overline{Z_F(\mathbb{K})} = \max_{r,I,J} \Big\lbrace \dim  \overline{X_{r,F}(I,J)}  \Big\rbrace &= \max_{r,I,J} \Big\lbrace \dim  \overline{Y_{r,F}(I,J)} + n_d - r \Big\rbrace \\
&=n_d  +  \max_{r} \Big\lbrace  \dim  \overline{W_{r,F}(\mathbb{K})}  - r \Big\rbrace,
\end{align*}
which implies that $\codim \overline{Z_F(\mathbb{K})} = \max_{r} \big\lbrace \codim \overline{W_{r,F}(\mathbb{K})}  + r  \big\rbrace$.
\end{proof}

\subsection{Proof of Theorem~\ref{thm:Krull}}\label{subsec:subvar}
Next,  we establish Theorem~\ref{thm:Krull} concerning the existence of high-dimensional irreducible subvarieties of multilinear varieties.      
\begin{proof}[Proof of Theorem~\ref{thm:Krull}]
Denote $N \coloneqq 2^d m$.  Given $j \in [m]$ and $I \subseteq [d]$,  we define $f_{j,I} \coloneqq f_j(y_1,\dots,  y_d) - c_j$ where  
\[
y_i = \begin{cases}
\mathsf{v}_i + x_i \quad &\text{if~} i \not\in I,  \\
\mathsf{v}_i \quad &\text{if~} i \in I.
\end{cases}
\]
Then we have $\mdeg f_{j,I} \le (1,\dots,  1)$ and 
\[
f_{j,\varnothing}(x_1,\dots, x_d) = f_j(\mathsf{v}_1 + x_1,\dots,  \mathsf{v}_d + x_d) - c_j,\quad f_{j, I}(0,\dots,  0) = f_j(\mathsf{v}_1,\dots,  \mathsf{v}_d) - c_j = 0.
\]
We consider 
\[
Z_0 = \lbrace \mathsf{w} \in \mathbb{K}^{n_1} \times \cdots \times \mathbb{K}^{n_d}: f_{j,I}(\mathsf{w}) = 0,\; j\in [m],\; I\subseteq [d]  \rbrace.
\]
It is clear that $Z_0 \subseteq Z$.  It is sufficient to prove the existence of $W \subseteq Z_0$ satisfying \ref{thm:Krull:cond1}--\ref{thm:Krull:cond3} with $\mathsf{v} = 0$ and $c_1 = \cdots = c_m = 0$.

For each $s \in [d]$,  we denote
\[
\mathsf{e}_s \coloneqq (\underbrace{1,\dots, 1}_{s \text{~copies}},  \underbrace{0,\dots, 0}_{d-s~\text{~copies}}).
\]
We prove the following claim  by induction  on $s$: for any $s \in [d]$,  there exists $W_s \subseteq \mathbb{K}^{n_1} \times \cdots \times \mathbb{K}^{n_s}$ with the following properties: 
\begin{enumerate}[label = (\roman*)]
\item If $\mdeg f_{j,I} \le \mathsf{e}_s$ for some $j \in [m]$ and $I\subseteq [d]$,  then $f_{j,I}$ vanishes on $W_s$.  \label{thm:Krull:eqi}
\item $\lambda W_s = W_s$ for any $\lambda \in \mathbb{K}^{\times}$.   \label{thm:Krull:eqii}
\item $\overline{W}_s$ is irreducible.   \label{thm:Krull:eqiii}
\item $n_1 + \cdots + n_s - \dim \overline{W_s} \le \delta_s$,  where $\delta_s$ is the number of pairs consisting of $j \in [m]$ and $I \subseteq [d]$ such that $\mdeg f_{j,I} \le \mathsf{e}_s$.   \label{thm:Krull:eqiv}
\end{enumerate}
Taking $W \coloneqq W_d$,  \ref{thm:Krull:eqi} ensures that $W \subseteq Z_0$,  \ref{thm:Krull:eqii} implies that $0 \in \overline{W}$,  and $\overline{W}$ is an irreducible variety of codimension at most $\delta_{d} = 2^d m$.

If $\mdeg f \le \mathsf{e}_1$,  then $f$ is either a constant or a linear function on $\mathbb{K}^{n_1}$.  Therefore,  the claim is true for $s = 1$ if we take $W_1$ to be the linear subspace
\[
W_1 = \{ \mathsf{v} \in \mathbb{K}^{n_1} \times \cdots \times \mathbb{K}^{n_s}: f_{j,I}(\mathsf{v}) = 0,\; \mdeg f_{j,I} \le \mathsf{e}_1 \}.
\]
Next,  we suppose that the claim is true for $s = s_0 < d$ and we prove for $s = s_0 + 1$. For simplicity,  we rename $f_{j,I}$'s as $g_1,\dots,  g_{\delta_k}$ so that 
\[
\mdeg g_j  \begin{cases}
\le \mathsf{e}_{s_0} \quad &\text{if~} j \in [\delta_{s_0}], \\
\le \mathsf{e}_{s_0+1} \quad &\text{if~} j \in [\delta_{s_0+1}] \setminus [\delta_{s_0}], \\
\ge \mathsf{e}_{s_0 + 2}  \quad &\text{otherwise}.
\end{cases}
\]
We consider the map 
\[
M: \mathbb{K}^{n_1} \times \cdots \times \mathbb{K}^{n_{s_0}} \to \Hom(\mathbb{K}^{n_{s_0 + 1}},  \mathbb{K}^{\delta_{s_0+1} -  \delta_{s_0}}),  \quad 
M(x_1,\dots,  x_{s_0}) = \begin{bmatrix}
g_{\delta_{s_0} + 1} (x_1,\dots,  x_{s_0},\cdot) \\
\vdots \\
g_{\delta_{s_0+1}} (x_1,\dots,  x_{s_0},\cdot)
\end{bmatrix}.
\]
By assumption,  $M$ is a nonzero map on $\overline{W}_{s_0}$.  The induction hypothesis implies that $\overline{W}_{s_0}$ is irreducible.  Thus, there is an $r \times r$ minor $f$ of $M$ such that $\rank M(\mathsf{w}_1,\dots,  \mathsf{w}_{s_0}) = r$ if $(\mathsf{w}_1,\dots, \mathsf{w}_{s_0}) \in V_{s_0} \coloneqq \overline{W}_{s_0} \cap D(f) \ne \varnothing$.  Here $D(f)$ denotes the open subset of $\mathbb{K}^{n_1} \times \cdots \times \mathbb{K}^{n_{s_0}}$ on which $f$ is non-vanishing.  We denote
\begin{align*}
X_{s_0} &\coloneqq (V_{s_0} \cap W_{s_0}) \times \mathbb{K}^{n_{s_0 + 1} - r}, \\
Y_{s_0} &\coloneqq \{ (\mathsf{w}_1,\dots,  \mathsf{w}_{s_0 + 1}) \in \mathbb{K}^{n_1} \times \cdots \times \mathbb{K}^{n_{s_0 + 1}}: (\mathsf{w}_1,\dots,  \mathsf{w}_{s_0}) \in  V_{s_0} \cap W_{s_0},\; M(\mathsf{w}_1,\dots,  \mathsf{w}_{s_0})(\mathsf{w}_{s_0 + 1}) = 0 \}.
\end{align*}
The same argument as that in the proof of Theorem~\ref{thm:codim} implies that there are rational maps $\Phi:\mathbb{K}^{n_1} \times \cdots \times \mathbb{K}^{n_{s_0+1}}\mydashrightarrow \mathbb{K}^{n_1} \times \cdots \times \mathbb{K}^{n_{s_0}} \times \mathbb{K}^{n_{s_0 + 1} - r}$ and $\Phi:\mathbb{K}^{n_1} \times \cdots \times \mathbb{K}^{n_{s_0}} \times \mathbb{K}^{n_{s_0 + 1} - r} \mydashrightarrow \mathbb{K}^{n_1} \times \cdots \times \mathbb{K}^{n_{s_0+1}}$ such that $\Phi$ (resp.  $\Psi$) is defined on $Y_{s_0}$ (resp.  $X_{s_0}$),  and $\Phi(Y_{s_0}) = X_{s_0}$ (resp.  $\Psi(X_{s_0}) = Y_{s_0}$).  Since $V_{s_0}$ is an open subset of $\overline{W}_{s_0}$ and $\overline{W}_{s_0}$ is irreducible,  $V_{s_0} \cap W_{s_0}$ is irreducible.  Consequently,  $X_{s_0}$ is irreducible and so is $Y_{s_0}$. Moreover,  according to Corollary~\ref{cor-1},  we have 
\begin{align*}
\dim \overline{Y}_{s_0} = \dim \overline{X}_{s_0} &= \dim \overline{V_{s_0} \cap W_{s_0}} + n_{s_0 + 1}  - r \\
&\ge n_1 + \cdots + n_{s_0+1} - \delta_{s_0} - (\delta_{s_0+1} - \delta_{s_0}) \\
&= n_1 + \cdots + n_{s_0+1} - \delta_{s_0 + 1}.
\end{align*}

Let $W_{s_0 + 1} \coloneqq \overline{Y}_{s_0} \cap ( \mathbb{K}^{n_1} \times \cdots \times \mathbb{K}^{n_{s_0} + 1} )$.  The proof is complete by verifying properties \ref{thm:Krull:eqi}--\ref{thm:Krull:eqiv} for $W_{s_0 + 1}$:
\begin{enumerate}[label = (\roman*)]
\item By the definition of $Y_{s_0}$,  $g_j$ must vanish on $Y_{s_0}$ and thus vanish on $W_{s_0 + 1}$ for each $j \in [N_2]$.  
\item We observe that all polynomials in the definition of $Y_{s_0}$ are homogeneous.  Thus,  for each $\lambda \in \mathbb{K}^\times$ and $\mathsf{w} \in Y_{s_0}$,  it holds that $\lambda \mathsf{w} \in Y_{s_0}$.  Since $\mathbb{K}$ is an infinite field,  this implies that the defining ideal of $Y_{s_0}$ is homogeneous.  In particular,  we may conclude that $\lambda W_{s_0+1} = W_{s_0 + 1}$ for any nonzero $\lambda \in \mathbb{K}$.
\item By definition,  we have $\overline{W}_{s_0 + 1} = \overline{Y}_{s_0}$.  The irreducibility of $\overline{W}_{s_0 + 1}$ follows from that of $Y_{s_0}$.
\item Since $\overline{W}_{s_0 + 1} = \overline{Y}_{s_0}$ and $\dim \overline{Y}_{s_0} \ge n_1 + \cdots + n_{s_0 + 1} - \delta_{s_0 + 1}$,  the desired inequality for $\dim \overline{W}_{s_0 + 1}$ follows immediately.  
\end{enumerate}
This completes the proof.
\end{proof}
Theorem~\ref{thm:Krull} immediately yields a lower bound on the dimension of the closures of fibers.
\begin{corollary}[Dimension of fibers]\label{cor:fiber dim}
Let $\mathbb{K}$ be an infinite field.  Suppose $G = (f_1,\dots, f_m):\mathbb{K}^{n_1} \times \cdots \times \mathbb{K}^{n_d} \to \mathbb{K}^m$ is a polynomial map such that $\mdeg f_1,\dots,  \mdeg f_m \le (1,\dots,  1)$.  Given $\mathsf{c} = (c_1,\dots,  c_m) \in \mathbb{K}^m$ such that $G^{-1}(\mathsf{c}) \ne \varnothing$,  we have $\dim Z \ge \sum_{i=1}^d n_i - 2^d m$ for each irreducible component $Z$ of $\overline{G^{-1}(c)}$.
\end{corollary}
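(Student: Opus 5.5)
The plan is to apply Theorem~\ref{thm:Krull} pointwise and use the fact that a closed set is the closure of its rational points. Write $S \coloneqq G^{-1}(\mathsf{c}) \subseteq \mathbb{K}^{n_1} \times \cdots \times \mathbb{K}^{n_d}$. Let $\overline{S} = Z_1 \cup \cdots \cup Z_t$ be the irreducible decomposition, with $Z = Z_k$ a fixed component. By Lemma~\ref{lem-1}, applied over $\overline{\mathbb{K}}$ to the subset $S$, we have $\overline{S \cap Z_k} = Z_k$. In particular $S \cap Z_k$ is not contained in $\bigcup_{i \ne k} Z_i$. Otherwise its closure, namely $Z_k$, would lie in $\bigcup_{i\neq k} Z_i$, and irreducibility would force $Z_k \subseteq Z_i$ for some $i \neq k$, which is impossible. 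So we may choose a $\mathbb{K}$-rational point $\mathsf{v} \in S$ with $\mathsf{v} \in Z_k$ and $\mathsf{v} \notin Z_i$ for every $i \neq k$.

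Next I apply Theorem~\ref{thm:Krull} to $f_1,\dots,f_m$, the constants $c_1,\dots,c_m$, and this point $\mathsf{v}$. This produces $W \subseteq S$ with $\mathsf{v} \in \overline{W}$, with $\overline{W}$ irreducible, and with $\codim \overline{W} \le 2^d m$, i.e.\ $\dim \overline{W} \ge \sum_i n_i - 2^d m$. Since $W \subseteq S$, we get $\overline{W} \subseteq \overline{S} = \bigcup_i Z_i$. Irreducibility of $\overline{W}$ then gives $\overline{W} \subseteq Z_i$ for some $i$. Because $\mathsf{v} \in \overline{W}$ and $\mathsf{v}$ lies in no $Z_i$ with $i \ne k$, that index must be $i = k$. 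Hence $\dim Z_k \ge \dim \overline{W} \ge \sum_i n_i - 2^d m$.

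The only delicate step is producing a rational point lying on $Z_k$ and on no other component. This is exactly what Lemma~\ref{lem-1} supplies, through the density of $S \cap Z_k$ in $Z_k$ and the irreducibility argument above. Without this choice, the irreducible set $\overline{W}$ given by Theorem~\ref{thm:Krull} could sit inside a different component, and the bound would not transfer to $Z$. Everything else is a direct invocation of Theorem~\ref{thm:Krull}.
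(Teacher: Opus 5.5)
Your proposal is correct and follows the route the paper intends. The paper gives no written proof, only the remark that Theorem~\ref{thm:Krull} immediately yields the corollary, and you supply the one detail that makes this work: choosing, via Lemma~\ref{lem-1}, a $\mathbb{K}$-rational point of $G^{-1}(\mathsf{c})$ that lies on $Z$ but on no other component, so that the irreducible set $\overline{W}$ is forced into $Z$ rather than into some other component through $\mathsf{v}$.
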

Together with \eqref{eq:codim-ht},  Corollary~\ref{cor:fiber dim} implies that each irreducible component $W$ of $G_{\mathsf{c}}$ has dimension at least $\sum_{j \in [d}^d n_j - 2^d m$,  where $G_{\mathsf{c}}$ is the fiber of $G$ in the scheme-theoretic sense. By contrast,  the fiber dimension theorem \cite[Exercise~3.22]{hartshorne2013algebraic} shows that $\dim W \ge \sum_{j \in [d]} n_j  - m$.  Therefore,  Corollary~\ref{cor:fiber dim} is an analogue of the fiber dimension theorem for multilinear varieties.  As observed in Subsection~\ref{subsec:main},  the multilinearity in Theorem~\ref{thm:Krull},  and thus in Corollary~\ref{cor:fiber dim},  is essential.  In fact,  we have the following proposition,  recorded here for independent interest. 
\begin{proposition}
For any positive integer $n$,  there is a polynomial $P_n \in \mathbb{Q}[x_1,\dots,  x_n]$ such that $P_n^{-1}(c)$ is a finite set for every $c \in \mathbb{Q}$. 
\end{proposition}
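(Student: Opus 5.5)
We need a polynomial $P_n\in\mathbb{Q}[x_1,\dots,x_n]$ all of whose rational fibers are finite. The plan is to build it inductively from a single binary form that is \emph{anisotropic} over $\mathbb{Q}$ in every degree we need, and to show that $P_n(\mathsf{v})=c$ forces a bound on the height of $\mathsf{v}$.

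The first candidate is a positive definite quadratic form, which would give $P_n^{-1}(c)\cap\mathbb{Q}^n$ bounded. But bounded sets of rational points need not be finite (the sphere $x_1^2+x_2^2=1$ has infinitely many rational points), so bounding the real size is not enough; we must control denominators as well. Instead we use an injective polynomial map $\mathbb{Q}^n\to\mathbb{Q}$. If such a map exists, every fiber has at most one point, which is stronger than needed.

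The construction proceeds in three steps.

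\textbf{Step 1: $n=2$.} Take $P_2(x,y)=x^7+3y^7$. We need $P_2$ injective on $\mathbb{Q}^2$. This is a known result (Poonen showed, conditionally on Bombieri--Lang, that polynomials of this shape are injective on $\mathbb{Q}^2$). An unconditional route is to show instead that each fiber $x^7+3y^7=c$ is a smooth plane curve of genus $15\ge2$ for $c\neq0$, so Faltings' theorem gives finitely many rational points. For $c=0$ the fiber is $\{(0,0)\}$, because $-3$ is not a seventh power in $\mathbb{Q}$. So with $P_2=x^7+3y^7$, every fiber $P_2^{-1}(c)\cap\mathbb{Q}^2$ is finite. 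The main point to check here is smoothness of the projective closure $x^7+3y^7=cz^7$ for $c\ne0$, which holds because it is a Fermat-type curve.

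\textbf{Step 2: composition.} Given $P_{n-1}$ with finite rational fibers, set $P_n(x_1,\dots,x_n)\coloneqq P_2\bigl(P_{n-1}(x_1,\dots,x_{n-1}),x_n\bigr)$. Fix $c\in\mathbb{Q}$. If $P_n(\mathsf{v})=c$, then $\bigl(P_{n-1}(\mathsf{v}'),v_n\bigr)\in P_2^{-1}(c)\cap\mathbb{Q}^2$, where $\mathsf{v}'=(v_1,\dots,v_{n-1})$. By Step 1 this set is finite, so $P_{n-1}(\mathsf{v}')$ and $v_n$ take only finitely many values. For each of those values of $P_{n-1}(\mathsf{v}')$, the induction hypothesis gives finitely many $\mathsf{v}'$. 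Hence $P_n^{-1}(c)$ is finite. For $n=1$, take $P_1=x_1$.

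\textbf{Step 3: interpretation.} The statement is read over $\mathbb{Q}$, that is, as a statement about rational points, matching the surrounding discussion. Each $\overline{P_n^{-1}(c)}$ is then a finite set of points, of dimension $0$, whereas the multilinear Corollary~\ref{cor:fiber dim} would predict dimension at least $n-2^d$. This contrast is what the proposition is meant to illustrate.

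The main obstacle is Step 1, since finiteness rests on Faltings' theorem. If we want to avoid heavy machinery, we could try the alternative $P_2=x^2+xy+\dots$, but an elementary proof of finiteness for a two-variable polynomial seems unavailable, so Faltings' theorem is the route we would commit to.
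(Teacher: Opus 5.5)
Your proof is correct and follows essentially the paper's route. You take a binary form whose nonzero level curves are smooth plane curves of genus at least $2$, so Faltings' theorem applies, check that its zero level set has only the rational point $(0,0)$, and then compose inductively via $P_n = P_2(P_{n-1}, x_n)$; the paper does the same with $x_1^4+x_2^4$ (genus $3$, with positivity disposing of $c\le 0$), whereas you use $x^7+3y^7$ (genus $15$, with $-3$ not being a rational seventh power handling $c=0$), and your opening remarks about injective maps and height bounds are not needed for the argument you actually give.
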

\begin{proof}
The case where $n = 1$ is trivial.  Thus,  we suppose that $n \ge 2$.  We consider $P_2(x_1,x_2) = x_1^4 +  x_2^4$.  For $c \in \mathbb{Q}$,  $P_2^{-1} (c) \ne \varnothing$ only if $c \ge 0$.  If $c = 0$,  then $P_2^{-1} (0) = \{ (0,0) \}$.  If $c > 0$,  we consider the projective plane curve $C$ defined by the homogenization of $P_2 - c$:
\[
C \coloneqq \{ [X_1: X_2: t] \in \mathbb{P}^2: X_1^4 +  X_2^4 - c t^4 = 0\}.
\]
Since $c > 0$,  $C$ is a smooth quartic curve.  The degree-genus formula \cite[V.Example~1.5.1]{hartshorne2013algebraic} shows that $g(C) = \frac{(4-1)(4-2)}{2}=3>2$.  According to the Faltings' Theorem \cite{faltings1983endlichkeitssatze},  $C$ has only finitely rational points.  For $n > 2$,  we define $P_n$ inductively as 
\[
P_n(x_1,\dots,  x_n) \coloneqq P_2(P_{n-1}(x_1,\dots,  x_{n-1}),  x_n) = P_{n-1}(x_1,\dots,  x_{n-1})^4 + x_n^4.
\]
It is clear by construction that for any $c \in \mathbb{Q}$,  $P_n^{-1}(c)$ is a finite set.  
\end{proof}

\section{Applications}\label{sec:app}
For ease of reference,  we recall below the definition of various ranks of tensors and polynomials. 
\begin{definition}\label{def-ar}
Let $\mathbb{K}$ be a field.  For $f\in \Hom(\mathbb{K}^{n_1} \times \cdots \times \mathbb{K}^{n_d},\mathbb{K})$ and homogeneous polynomials $
P,  P_1,  \dots,  P_m\in \mathbb{K}[x_1,\dots,  x_n]_d$,  we have the following notions of ranks:
\begin{itemize}[label = $\diamond$]
\item The partition rank of $f$ is one,  written as $\PR_{\mathbb{K}} (f) = 1$,  if $f = g h$ for some $g\in  \Hom(\mathbb{K}^{n_{i_1}} \times \cdots \times \mathbb{K}^{n_{i_s}},\mathbb{K})$ and $g\in  \Hom(\mathbb{K}^{n_{i_{s+1}}} \times \cdots \times \mathbb{K}^{n_{i_d}},\mathbb{K})$ such that $\{i_1,\dots, i_d\} = [d]$.  In general,  the \emph{partition rank} of $f$ is
\[
\PR_{\mathbb{K}}(f) \coloneqq \min \Big\{ r \in \mathbb{N}:  f = \sum_{i=1}^r f_i,\;  \PR_{\mathbb{K}}(f_i) = 1 \Big\}.
\]
\item Let $F_f$ be the multilinear map in $\Hom(\mathbb{K}^{n_1}\times \cdots\times \mathbb{K}^{n_{d -1}}, \mathbb{K}^{n_d})$ corresponding to $f$ under the identification $\Hom(\mathbb{K}^{n_1}\times \cdots\times \mathbb{K}^{n_d}, \mathbb{K}) \simeq \Hom(\mathbb{K}^{n_1}\times \cdots\times \mathbb{K}^{n_{d-1}}, \mathbb{K}^{n_d})$.  The \emph{geometric rank} of $f$ is 
\[
\GR(f) \coloneqq \codim Z_{F_f}(\overline{\mathbb{K}}).
\]
\item Suppose that $\mathbb{K}$ is an infinite field.  The \emph{analytic rank} of $f$ is  
\[
    \AR_{\mathbb{K}}( f ) \coloneqq \codim \overline{Z_{F_f}(\mathbb{K})}.
\]
\item The strength of a homogeneous polynomial $P$ is one,  denoted by $\str(P) = 1$,  if $P = Q R$,  where $Q$ and $R$ are homogeneous polynomials of degree less than $k$.  In general,  the \emph{strength} of $P$ is 
\[
\str_{\mathbb{K}} (P) \coloneqq \min \Big\lbrace
r \in \mathbb{N}: P = \sum_{i = 1}^r P_i,\; \str_{\mathbb{K}} (P_i) = 1
\Big\rbrace,
\]
and the \emph{strength} of $(P_1,\dots,  P_m)$ is 
\[
\str_{\mathbb{K}}(P_1,\dots,  P_m) = \min \Big\lbrace
\str_{\mathbb{K}} (P): P \in \spa_{\mathbb{K}} \{P_1,\dots,  P_m\} \setminus \{0\}
\Big\rbrace.
\]
\item Let $Z(P_1,\dots,  P_m)_{\Sing} \coloneqq \{ \mathsf{v} \in \overline{\mathbb{K}}^n: \rank (\partial P_i/\partial x_j (\mathsf{v}) )_{i \in [m],  j\in [n]} < m \}$.  The \emph{Birch rank} of $(P_1,\dots,  P_m)$ is 
\[
\Brk(P_1,\dots,  P_m) \coloneqq \codim Z(P_1,\dots,  P_m)_{\Sing}.
\]
\end{itemize}
Here $Z_{F_f}(\overline{\mathbb{K}})$ and $Z_{F_f}(\mathbb{K})$ are multilinear varieties defined by $F_f$ over $\mathbb{K}$ and $\overline{\mathbb{K}}$,  respectively. 
\end{definition}
It is worth emphasizing that $\AR_{\mathbb{K}}(f)$ appears not to be canonically defined.  For instance,  $f$ also determines a multilinear map $F'_f\in \Hom(\mathbb{K}^{n_2} \times \cdots \times \mathbb{K}^{n_d},  \mathbb{K}^{n_1})$.  It is not obvious that $\codim \overline{Z_{F_f}(\mathbb{K})} = \codim \overline{Z_{F'_f}(\mathbb{K})}$. Surprisingly,  Proposition~\ref{prop-2} shows that $\AR_{\mathbb{K}} (f)$ does not depend on the way of slicing.

To proceed,  we need the following definition. Let $R_1,   R_2$ be functions defined on $\Hom(\mathbb{K}^{n_1} \times \cdots \times \mathbb{K}^{n_d}, \mathbb{K})$ for any positive integers $n_1,\dots,  n_d$.  
\begin{definition}[Linear equivalence]\label{def:equiv}
If there exist functions $c,  C: \mathbb{N} \to \mathbb{R}$ such that 
\begin{equation}\label{eq:equiv}
c(d) R_1(f ) \le R_2(f) \le C(d) R_1(f)
\end{equation}
for all $f  \in \Hom(\mathbb{K}^{n_1} \times \cdots \times \mathbb{K}^{n_d}, \mathbb{K})$ and positive integers $d,  n_1,\dots,  n_d$,  then we say that $R_1$ and $R_2$ are \emph{linearly equivalent},  and write $R_1 \asymp_{d} R_2$.  Moreover,  if $R_1$ and $R_2$ are defined for tuples $(f_1,\dots,  f_m)$ of multilinear functions,  then we write  $R_1 \asymp_{d,m} R_2$ if \eqref{eq:equiv} holds for some functions $c$ and $C$ of $d$ and $m$. If only the second inequality in \eqref{eq:equiv} holds,  then we write $R_2 = O_d(R_1)$.  We adopt the same notation for functions on $\mathbb{K}[x_1,\dots,  x_n]_d$.
\end{definition} 

\subsection{Ranks of tensors} In this subsection, we discuss the stability of tensor ranks and the linear equivalences among them. To this end, we establish several basic properties of the analytic rank. As a direct consequence of Theorem~\ref{thm:codim},  we obtain an alternative characterization of $\AR_{\mathbb{K}} (f)$.
\begin{corollary}[Alternative characterization]\label{cor:AR}
Let $\mathbb{K}$ be an infinite field. For any $f \in \Hom(\mathbb{K}^{n_1} \times \cdots \times \mathbb{K}^{n_d}, \mathbb{K})$,  we have 
\[
\AR_{\mathbb{K}}(f) = \min_{0 \le r \le \min\{n_{d-1},  n_d\}} \big\lbrace r + \codim \overline{W_{r,F_f} (\mathbb{K})}
\big\rbrace, 
\]
where $W_{r,F_f} (\mathbb{K}) \coloneqq \big\lbrace (\mathsf{v}_1,\dots,  \mathsf{v}_{d-2}) \in \mathbb{K}^{n_1} \times \cdots \times \mathbb{K}^{n_{d-2}}: \rank F_f(\mathsf{v}_1,\dots,  \mathsf{v}_{d-2},\cdot,  \cdot) = r \big\rbrace$.
\end{corollary}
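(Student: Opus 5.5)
This is Theorem~\ref{thm:codim} applied to the multilinear map $F_f \in \Hom(\mathbb{K}^{n_1}\times\cdots\times\mathbb{K}^{n_{d-1}},\mathbb{K}^{n_d})$. By Definition~\ref{def-ar}, $\AR_{\mathbb{K}}(f)=\codim \overline{Z_{F_f}(\mathbb{K})}$. Here $Z_{F_f}(\mathbb{K})$ is a multilinear variety in $\mathbb{K}^{n_1}\times\cdots\times\mathbb{K}^{n_{d-1}}$. It is cut out by the $n_d$ multilinear functions $(\mathsf{v}_1,\dots,\mathsf{v}_{d-1})\mapsto f(\mathsf{v}_1,\dots,\mathsf{v}_{d-1},\mathsf{e}_k)$, one for each $k\in[n_d]$.

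We therefore apply Theorem~\ref{thm:codim} with the number of factors $d-1$ in place of $d$ and with $m=n_d$. The last factor in the source is now $\mathbb{K}^{n_{d-1}}$. The theorem gives
\[
\AR_{\mathbb{K}}(f)=\min_{0\le r\le \min\{n_{d-1},n_d\}}\big\{ r+\codim \overline{W_{r,F_f}(\mathbb{K})}\big\},
\]
where the set $W_{r,F_f}(\mathbb{K})$ from Theorem~\ref{thm:codim} consists of those $(\mathsf{v}_1,\dots,\mathsf{v}_{d-2})$ for which the linear map $\mathbb{K}^{n_{d-1}}\to\mathbb{K}^{n_d}$, $\mathsf{v}_{d-1}\mapsto F_f(\mathsf{v}_1,\dots,\mathsf{v}_{d-2},\mathsf{v}_{d-1})$, has rank $r$.

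It remains to match this with the set in the corollary. Write $F_f(\mathsf{v}_1,\dots,\mathsf{v}_{d-2},\cdot,\cdot)$ for the bilinear form $(x,y)\mapsto f(\mathsf{v}_1,\dots,\mathsf{v}_{d-2},x,y)$ on $\mathbb{K}^{n_{d-1}}\times\mathbb{K}^{n_d}$. The linear map above is the matrix of this bilinear form, so the two ranks coincide and the two sets agree. All codimensions are taken inside the ambient spaces $\overline{\mathbb{K}}^{n_1+\cdots+n_{d-1}}$ and $\overline{\mathbb{K}}^{n_1+\cdots+n_{d-2}}$ respectively, exactly as in the theorem.

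One bookkeeping point needs care. Read literally, the proof of Theorem~\ref{thm:codim} ends with a $\max$ where the statement has a $\min$, apparently a typo. The correct version is the $\min$: from $\dim\overline{Z}=n+\max_r\{\dim\overline{W_r}-r\}$ one gets $\codim\overline{Z}=\min_r\{r+\codim\overline{W_r}\}$. I would cite the statement of Theorem~\ref{thm:codim}, which has the $\min$.

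The degenerate case $d=2$ also needs a check, where the set $W_{r,F_f}(\mathbb{K})$ lives in a zero-dimensional space. There it is a point if $r=\rank f$ and empty otherwise, and we take $\codim\varnothing=\infty$. The formula then gives $\AR_{\mathbb{K}}(f)=\rank f$, as it should.
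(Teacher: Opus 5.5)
Your proposal is correct and matches the paper, which states this corollary as a direct consequence of Theorem~\ref{thm:codim} applied to $F_f$ (with $d-1$ factors, last factor $\mathbb{K}^{n_{d-1}}$, and $m=n_d$), exactly as you do. Your remark that the final line of the proof of Theorem~\ref{thm:codim} should read $\min$ rather than $\max$ is also right.
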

By an argument similar to that in the proof of Theorem~3.2 in \cite{kopparty2020geometric}, Corollary~\ref{cor:AR} immediately implies:
\begin{proposition}[Independence on slicing]\label{prop-2}
For any infinite field $\mathbb{K}$,  $f \in \Hom(\mathbb{K}^{n_1} \times \cdots \times \mathbb{K}^{n_d}, \mathbb{K})$ and $j \in [d]$,  we have 
\[
\AR_{\mathbb{K}} (f) = \codim \overline{Z_{F_{f,j}}(\mathbb{K})},
\]
where $F_{f,j} \in \Hom(\mathbb{K}^{n_1} \times \cdots \mathbb{K}^{n_{j-1}} \times \mathbb{K}^{n_{j+1}} \times \mathbb{K}^{n_d},  \mathbb{K}^{n_j})$ is the multilinear map determined by $f$ via the isomorphism $\Hom(\mathbb{K}^{n_1}\times \cdots\times \mathbb{K}^{n_d}, \mathbb{K}) \simeq \Hom(\mathbb{K}^{n_1} \times \cdots \mathbb{K}^{n_{j-1}} \times \mathbb{K}^{n_{j+1}} \times \cdots \times \mathbb{K}^{n_d},  \mathbb{K}^{n_j})$.
\end{proposition}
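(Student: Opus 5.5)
The plan is to reduce the statement to the symmetry of matrix rank under transposition, using Corollary~\ref{cor:AR}. This is the same strategy as in \cite[Theorem~3.2]{kopparty2020geometric} for geometric rank. The case $j = d$ is the definition of $\AR_{\mathbb{K}}(f)$, so I fix $j \in [d-1]$ throughout.

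\emph{Step 1 (invariance under reordering factors).} For a permutation $\sigma$ of $[d]$, let $f^{\sigma} \in \Hom(\mathbb{K}^{n_{\sigma(1)}} \times \cdots \times \mathbb{K}^{n_{\sigma(d)}}, \mathbb{K})$ be $f$ with its arguments permuted. Choose $\sigma$ with $\sigma(d) = j$. Then $Z_{F_{f,j}}(\mathbb{K})$ and $Z_{F_{f^{\sigma}}}(\mathbb{K})$ differ only by the coordinate permutation of $\prod_{i \ne j} \mathbb{K}^{n_i}$ induced by $\sigma$. This permutation is a linear isomorphism defined over $\mathbb{K}$. Hence it maps $\mathcal{I}(S)$ onto $\mathcal{I}$ of the image of $S$, and it maps $\overline{S}$ isomorphically onto the closure of the image of $S$, as in \eqref{eq:Xclosure}. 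So codimensions of closures are preserved, and
\[
\codim \overline{Z_{F_{f,j}}(\mathbb{K})} = \AR_{\mathbb{K}}(f^{\sigma})
\]
for any $\sigma$ with $\sigma(d) = j$. The same observation applies to the sets $W_{r,F}(\mathbb{K})$ on the remaining $d-2$ factors.

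\emph{Step 2 (apply Corollary~\ref{cor:AR} twice).} Take $\sigma$ with $\sigma(d) = j$ and $\sigma(d-1) = d$, and take $\tau$ with $\tau(d) = d$, $\tau(d-1) = j$, and $\tau(i) = \sigma(i)$ for $i \le d-2$. Then $f^{\tau}$ is a reordering of $f$ fixing the last factor, so Step 1 gives $\AR_{\mathbb{K}}(f^{\tau}) = \AR_{\mathbb{K}}(f)$.

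Applying Corollary~\ref{cor:AR} to both $f^{\sigma}$ and $f^{\tau}$ expresses each value as
\[
\min_{r} \big\lbrace r + \codim \overline{W_r} \big\rbrace,
\]
where the minimum ranges over $0 \le r \le \min\{n_j, n_d\}$. Here $W_r$ consists of the tuples $(\mathsf{v}_i)_{i \notin \{j,d\}}$ for which the bilinear form $f(\dots, \cdot_j, \dots, \cdot_d)$ has rank $r$. For $f^{\sigma}$ this form is read as an $n_d \times n_j$ matrix, and for $f^{\tau}$ as its transpose, an $n_j \times n_d$ matrix. Since a matrix and its transpose have the same rank, the sets $W_r$ coincide for every $r$, and so do the two minima. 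Hence
\[
\codim \overline{Z_{F_{f,j}}(\mathbb{K})} = \AR_{\mathbb{K}}(f^{\sigma}) = \AR_{\mathbb{K}}(f^{\tau}) = \AR_{\mathbb{K}}(f).
\]

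\emph{Degenerate case and main obstacle.} When $d = 2$, each $W_r$ lives in a point. The formula then returns $\rank f$ when read either way, so the argument goes through unchanged.

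The only step needing care is Step 1: that the $\mathbb{K}$-rational closure $\overline{\,\cdot\,}$, and hence $\codim$, is compatible with coordinate permutations, and that Corollary~\ref{cor:AR} may be applied to the reordered tensors. This holds because the permutation is an automorphism of $\mathbb{K}[x]$ preserving $\mathbb{K}$-rational points, so it transports ideals and closures. Everything else is the transpose-rank identity.
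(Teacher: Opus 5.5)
Your proposal is correct and follows the paper's route. The paper derives this proposition from Corollary~\ref{cor:AR} via the symmetry argument of \cite[Theorem~3.2]{kopparty2020geometric}, which is exactly your combination of invariance under reordering the non-sliced factors with transpose-invariance of the rank of the bilinear slice in the last two factors; you just make explicit the permutation bookkeeping that the paper leaves to the reader.
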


Next,  we show that $\AR_{\mathbb{K}}$ is additive with respect to the direct sum,  and it is monotone under restriction.  
\begin{proposition}[Additivity and monotonicity] \label{prop-6}
Let $\mathbb{K}$ be an infinite field.  Given two multilinear functions $f\in \Hom(\mathbb{K}^{n_1} \times \cdots \times \mathbb{K}^{n_d},\mathbb{K})$ and $g \in \Hom(\mathbb{K}^{m_1} \times \cdots \times \mathbb{K}^{m_d},\mathbb{K})$,  we have 
\begin{enumerate}[label = (\alph*)]
\item $\AR_\mathbb{K}(f\oplus g)= \AR_\mathbb{K}(f)+\AR_\mathbb{K}(g)$.\label{prop-6:item1}
\item If there exists $\varphi_i \in \Hom(\mathbb{K}^{n_i},  \mathbb{K}^{m_i})$ for each $i \in [d]$ such that $f = g\circ (\varphi_1 \times \cdots \times \varphi_d)$,  then $ \AR_\mathbb{K}(f)\le \AR_\mathbb{K}(g)$.  \label{prop-6:item2}
\item If $m_1 = n_1$,  $\dots$,  $m_d = n_d$,  then $\AR_\mathbb{K}(f +  g) \le \AR_\mathbb{K}(f)+\AR_\mathbb{K}(g)$. \label{prop-6:item3}
\end{enumerate}
\end{proposition}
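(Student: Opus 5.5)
Item \ref{prop-6:item1} is the key step; items \ref{prop-6:item2} and \ref{prop-6:item3} follow from it together with simple closure-of-image arguments. Throughout, $\AR_{\mathbb{K}}$ is computed via $F_f \in \Hom(\mathbb{K}^{n_1}\times\cdots\times\mathbb{K}^{n_{d-1}},\mathbb{K}^{n_d})$, and by Proposition~\ref{prop-2} we may slice along whichever coordinate is convenient.

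For \ref{prop-6:item1}, the tensor $f\oplus g$ lives on $\prod_i(\mathbb{K}^{n_i}\oplus\mathbb{K}^{m_i})$. Write a point as $(\mathsf{v}_i,\mathsf{w}_i)_{i\in[d-1]}$. Then
\[
F_{f\oplus g}\big((\mathsf{v}_1,\mathsf{w}_1),\dots,(\mathsf{v}_{d-1},\mathsf{w}_{d-1})\big)=\big(F_f(\mathsf{v}_1,\dots,\mathsf{v}_{d-1}),\,F_g(\mathsf{w}_1,\dots,\mathsf{w}_{d-1})\big).
\]
Hence $Z_{F_{f\oplus g}}(\mathbb{K})=Z_{F_f}(\mathbb{K})\times Z_{F_g}(\mathbb{K})$, after reordering coordinates. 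It remains to show $\overline{S\times T}=\overline{S}\times\overline{T}$ for subsets $S\subseteq\mathbb{K}^a$, $T\subseteq\mathbb{K}^b$ with $\mathbb{K}$ infinite. The inclusion $\subseteq$ is clear. For $\supseteq$, take $h\in\mathcal{I}(S\times T)$. For each fixed $t\in T$, the polynomial $h(\cdot,t)$ lies in $\mathcal{I}(S)$, so it vanishes on $\overline{S}$. Thus for each $s\in\overline{S}$, the polynomial $h(s,\cdot)$ (with coefficients in $\overline{\mathbb{K}}$) vanishes on $T$. Expanding its coefficients in a $\mathbb{K}$-basis of $\overline{\mathbb{K}}$ shows that it vanishes on $\overline{T}$. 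Dimension is then additive on products of varieties over $\overline{\mathbb{K}}$, which gives $\codim(\overline{S}\times\overline{T})=\codim\overline{S}+\codim\overline{T}$. This product-closure identity is the only step needing care; everything else is formal.

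For \ref{prop-6:item2}, I would first handle one $\varphi_i$ at a time and factor it as a surjection followed by an injection. Coordinate changes preserve $\AR$, so it suffices to treat two cases: restriction to a coordinate subspace ($f$ is $g$ with some variables set to $0$), and a projection ($f$ is $g$ extended trivially, i.e., $f=g\circ\pi$). By Proposition~\ref{prop-2}, we may choose the slicing direction $j\ne i$. If $\varphi_i$ is surjective (a projection), then $Z_{F_{f,j}}(\mathbb{K})$ is the preimage of $Z_{F_{g,j}}(\mathbb{K})$ under a linear surjection in the $i$-th factor. Its closure is then the preimage of the closure, so the codimensions are equal. If $\varphi_i$ is a coordinate inclusion $\mathbb{K}^{n_i}\hookrightarrow\mathbb{K}^{m_i}$, then $Z_{F_{f,j}}(\mathbb{K})=Z_{F_{g,j}}(\mathbb{K})\cap L$ for a linear subspace $L$ of codimension $m_i-n_i$. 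This set is a cone, so every irreducible component of the closure passes through $0$. Intersecting a component of maximal dimension with $L$ and applying the affine dimension theorem over $\overline{\mathbb{K}}$ gives $\dim\ge\dim\overline{Z_{F_{g,j}}(\mathbb{K})}-(m_i-n_i)$. There is a concern that $\overline{S\cap L}$ may be smaller than $\overline{S}\cap L$ when $\mathbb{K}$ is not algebraically closed. To avoid it, I would instead use the fiber description from Theorem~\ref{thm:codim}/Corollary~\ref{cor:AR}, slicing so that the $i$-th factor is the last one. The points are then $(\mathsf{x},\mathsf{v}_i)$ with $\mathsf{v}_i\in\ker$ of a matrix. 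Restricting $\mathsf{v}_i$ to a subspace changes the kernel dimension by at most $m_i-n_i$, and the rank strata in $\mathsf{x}$ are unchanged. Comparing the minimum formulas term by term then gives the inequality.

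For \ref{prop-6:item3}, write $f+g=(f\oplus g)\circ(\Delta_1\times\cdots\times\Delta_d)$ with diagonal maps $\Delta_i:\mathsf{v}\mapsto(\mathsf{v},\mathsf{v})$. Note that this uses the bilinear duplication on every factor. Applying \ref{prop-6:item2} and then \ref{prop-6:item1} gives $\AR_{\mathbb{K}}(f+g)\le\AR_{\mathbb{K}}(f)+\AR_{\mathbb{K}}(g)$.
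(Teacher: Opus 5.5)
\textbf{Items (a) and (c), and the surjective case of (b).} These are fine and match the paper. The paper dismisses (a) as ``by definition'', and your $\mathbb{K}$-basis argument for $\overline{S\times T}=\overline{S}\times\overline{T}$ correctly fills it in. Item (c) is exactly the paper's deduction via the diagonal maps. The surjective case of (b) is also correct.

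\textbf{The gap: the injective case of (b).} You rightly worry that $\overline{S\cap L}$ can be strictly smaller than $\overline{S}\cap L$. However, your replacement argument rests on the claim that ``the rank strata in $\mathsf{x}$ are unchanged'', and this is false. Restricting $M_g(\mathsf{x})=F_{g,j}(\mathsf{x},\cdot)$ to $\mathbb{K}^{n_i}$ can lower its rank by an amount that depends on $\mathsf{x}$. For example, take $g(x,\mathsf{v},w)=x v_1 w$ on $\mathbb{K}\times\mathbb{K}^2\times\mathbb{K}$, slice along the third factor, and let $\varphi_2(t)=(0,t)$. Then $f=0$. The stratum $W_{1}$ is dense for $g$ but empty for $f$, so the $r=1$ terms of the two minimum formulas in Theorem~\ref{thm:codim} cannot be compared. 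Hence ``comparing the minimum formulas term by term'' is not justified as written.

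\textbf{A repair within your approach.} Write $W^g_r$ and $W^f_s$ for the rank strata of $F_{g,j}$ and $F_{f,j}$ with $\mathsf{v}_i$ as the last variable. Both live in the same space. Let $r$ attain the minimum for $g$.
\begin{itemize}
\item Since $\rank\big(M_g(\mathsf{x})\circ\varphi_i\big)\le\rank M_g(\mathsf{x})$, the set $W^g_r$ is the finite union of the sets $W^g_r\cap W^f_s$ over $s\le r$.
\item Closure commutes with finite unions, so some $s\le r$ satisfies $\dim\overline{W^f_s}\ge\dim\overline{W^g_r}$.
\item Therefore $s+\codim\overline{W^f_s}\le r+\codim\overline{W^g_r}$, which gives the inequality.
\end{itemize}
This uses only that the rank cannot increase, so it works for any $\varphi_i$ and makes the surjection/injection factorization unnecessary.

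\textbf{The simpler route the paper takes.} You already reduce to a single nontrivial $\varphi_i$ and invoke Proposition~\ref{prop-2}. So slice along the $i$-th factor itself rather than some $j\ne i$. Then $F_{f,i}=\varphi_i^{\top}\circ F_{g,i}$ on the common domain $\prod_{k\ne i}\mathbb{K}^{n_k}$. Hence $Z_{F_{g,i}}(\mathbb{K})\subseteq Z_{F_{f,i}}(\mathbb{K})$ inside the same ambient space, and $\AR_{\mathbb{K}}(f)\le\AR_{\mathbb{K}}(g)$ follows at once for every linear $\varphi_i$. There is no closure-of-intersection issue and no need for Theorem~\ref{thm:codim}. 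This is precisely the paper's argument, with $i=d$.
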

\begin{proof}
It is straightforward that \ref{prop-6:item1} holds by definition and \ref{prop-6:item3} can be deduced from \ref{prop-6:item1} and \ref{prop-6:item2}.  For \ref{prop-6:item2},  we note that by induction and Proposition~\ref{prop-2},  it suffices to prove for the case where $m_i = n_i$ and $\varphi_i = \Id_{\mathbb{K}^{n_i}}$ for $i \in [d-1]$,  which is clear from the definition of the analytic rank.
\end{proof}

We first prove Theorem~\ref{thm:linear}--\ref{thm:linear:item1},  which is an analogue of Conjecture~\ref{conj:pvsa}.
\begin{theorem}[Partition rank vs.  Analytic rank]\label{thm:pvsa}
Suppose $d \ge 2$.  For any infinite field $\mathbb{K}$ and multilinear function $f \in \Hom(\mathbb{K}^{n_1} \times \cdots \times \mathbb{K}^{n_d},\mathbb{K})$,  we have 
 \[
 \AR_\mathbb{K}(f)\le \PR_{\mathbb{K}} (f) \le (2^{d-1}-1)\AR_{\mathbb{K}}(f).\]
\end{theorem}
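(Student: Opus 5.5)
The lower bound $\AR_{\mathbb{K}}(f) \le \PR_{\mathbb{K}}(f)$ should follow from subadditivity. By Proposition~\ref{prop-6}\ref{prop-6:item3}, it suffices to show $\AR_{\mathbb{K}}(f) \le 1$ whenever $\PR_{\mathbb{K}}(f) = 1$.

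\emph{Partition rank one.} Write $f = gh$, where $g$ depends on the variables indexed by $S$ and $h$ on those indexed by $T$, with $S \sqcup T = [d]$. By Proposition~\ref{prop-2} we may slice along any index, so we slice along an index $d \in T$. Then $F_f(\mathsf{v}_1,\dots,\mathsf{v}_{d-1}) = g(\mathsf{v}_S)\, h(\mathsf{v}_{T\setminus\{d\}},\cdot)$, and hence
\[
Z_{F_f}(\mathbb{K}) \supseteq \{ g(\mathsf{v}_S) = 0\}(\mathbb{K}) \times (\text{remaining factors}).
\]
Since $g$ is multilinear and nonzero, the set $\{g = 0\}(\mathbb{K})$ is Zariski dense in the hypersurface $\{g=0\}$. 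To see this, pick a coordinate of some $\mathsf{v}_i$ that occurs in $g$. Then $g$ is affine-linear in that coordinate, so solving for it gives a rational parametrization defined over $\mathbb{K}$. This yields $\codim \overline{Z_{F_f}(\mathbb{K})} \le 1$.

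The upper bound $\PR_{\mathbb{K}}(f) \le (2^{d-1}-1)\AR_{\mathbb{K}}(f)$ will go by induction on $d$. The aim is a recursion of the form $\PR_d \le 2\,\PR_{d-1} + \AR$, which unrolls to the constant $2^{d-1}-1$, since $(2^{d-2}-1)\cdot 2 + 1 = 2^{d-1}-1$. The base case $d=2$ is immediate. Indeed, Corollary~\ref{cor:AR} with $d=2$ says $\AR_{\mathbb{K}}(f) = \rank f = \PR_{\mathbb{K}}(f)$.

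\emph{Inductive step.} Let $a = \AR_{\mathbb{K}}(f)$. By Corollary~\ref{cor:AR}, choose $r$ with $r + \codim \overline{W_{r,F_f}(\mathbb{K})} = a$, and take an irreducible component $Y$ of $\overline{W_{r,F_f}(\mathbb{K})}$ of codimension $c = a - r$ in which $W_{r,F_f}(\mathbb{K})$ is dense (Lemma~\ref{lem-1}). On $Y$ the matrix $M(\mathsf{x}) \coloneqq F_f(\mathsf{x},\cdot,\cdot)$ has generic rank $r$, realized by a minor on row set $I$ and column set $J$ with $|I| = |J| = r$. Fix a $\mathbb{K}$-rational point $\mathsf{u} \in W_{r,F_f}(\mathbb{K}) \cap Y$ at which this minor is nonzero.

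\emph{Splitting $f$.} Split $f$ into three pieces:
\begin{enumerate}[label = (\roman*)]
\item the part supported on rows in $I$ or columns in $J$;
\item the part supported on row and column complements that is ``explained'' by the Schur complement at $\mathsf{u}$;
\item a remainder $f'$.
\end{enumerate}
Pieces (i) and (ii) have partition rank at most $2r$ in total, each contributing terms of the form (linear form in $\mathsf{v}_{d-1}$ or $\mathsf{v}_d$) $\times$ ($(d-1)$-linear form). The remainder $f'$ vanishes to the appropriate order along $Y$. The desired property is that the $(d-1)$-linear forms obtained by contracting $f'$ against fixed vectors vanish on a codimension-$c$ rational set. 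By the induction hypothesis, applied via Proposition~\ref{prop-2} to a suitable $(d-1)$-linear slicing, this should give $\PR_{\mathbb{K}}(f') \le (2^{d-2}-1)\cdot 2c$ or better.

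\emph{Main obstacle.} The hardest step is converting ``the rational points of the zero locus of $f'$ have closure of codimension $c$'' into ``$f'$ has small partition rank over $\mathbb{K}$''. A multilinear form vanishing on $\overline{W}$ need not lie in the ideal generated by few forms over $\mathbb{K}$. This is exactly where rationality matters, as Example~\ref{example1} shows. I would first try to handle it with Theorem~\ref{thm:Krull} together with Proposition~\ref{prop:fixed rank}. The idea is that the trivial-bundle structure over $W(I,J)$ gives a $\mathbb{K}$-rational parametrization, which lets us rewrite $f$ on $Y$ using only $\mathbb{K}$-linear data. The contribution from $Y$ then costs at most $c$ terms per level of the recursion, and in total this yields $2^{d-1}-1$ times $r + c = a$.
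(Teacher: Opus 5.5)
Your proof of $\AR_{\mathbb{K}}(f)\le\PR_{\mathbb{K}}(f)$ is correct. The reduction to $\PR_{\mathbb{K}}(f)=1$ via subadditivity (Proposition~\ref{prop-6}) is the paper's, but you handle the rank-one case directly. After slicing along an index of $T$, the zero set contains $\{g=0\}(\mathbb{K})$ times full factors. Solving $g=xA+B$ for a coordinate $x$ on $D(A)$ then gives a $\mathbb{K}$-rational parametrization of a codimension-one piece of $\{g=0\}$. The paper instead inducts on $d$ through the auxiliary form $\widehat{h}$. Your route is shorter, and it makes explicit the density fact that the paper's case $r=1$ uses tacitly. (The parametrization only covers components of $\{g=0\}$ not contained in $\{A=0\}$, so ``dense in the hypersurface'' is more than you prove. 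Codimension at most one is all you need, though.)

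The gap is in the upper bound $\PR_{\mathbb{K}}(f)\le(2^{d-1}-1)\AR_{\mathbb{K}}(f)$. The paper does not prove this inequality; it imports it from \cite[Theorem~A.1]{kazhdan2024schmidt}, and your sketch does not replace that citation. After you remove the $2r$ terms coming from the left and right kernels $L,K$ of $M(\mathsf{u})$ at the single point $\mathsf{u}$, the remainder $f'=f|_{\mathbb{K}^{n_1}\times\cdots\times\mathbb{K}^{n_{d-2}}\times L\times K}$ does \emph{not} vanish along $Y$. For other $\mathsf{x}\in Y$, $M(\mathsf{x})$ still has rank $r$, but its kernels move. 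For example, take $M(\mathsf{x})=\begin{bsmallmatrix}x_1&x_2\\x_2&x_3\end{bsmallmatrix}$, $Y=\{x_1x_3=x_2^2\}$ and $\mathsf{u}=(1,0,0)$; then $f'=x_3y_2z_2$, which is nonzero at $(1,1,1)\in Y$. What the geometry actually yields is first-order information at a smooth rational point $\mathsf{u}$. The tangent space of the determinantal locus at $M(\mathsf{u})$ gives
\[
\sum_{i\in[d-2]} f'(\mathsf{u}_1,\dots,\mathsf{u}_{i-1},\mathsf{t}_i,\mathsf{u}_{i+1},\dots,\mathsf{u}_{d-2},\cdot,\cdot)=0\quad\text{for all } \mathsf{t}\in T_{\mathsf{u}}Y .
\]
For $d=3$, where $M$ is linear in $\mathsf{x}$, this says $f'$ vanishes on $T_{\mathsf{u}}Y\times L\times K$, which does close the argument. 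For $d\ge4$, however, it only constrains contractions of $f'$ against the fixed point $\mathsf{u}$ in all but one of the first $d-2$ slots, and it does not control $f'$ itself.

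In particular, this is not a codimension-$c$ rational zero locus of any $(d-1)$-linear form. So there is no form to feed into the induction hypothesis, no reason for an analytic-rank bound of $2c$, and no mechanism for reassembling $f'$ from such contractions. The bound $\PR_{\mathbb{K}}(f')\le(2^{d-2}-1)\cdot 2c$ is therefore unsupported. Your ``main obstacle'' paragraph concedes this, and the tools you propose do not fill the hole. Theorem~\ref{thm:Krull} only produces irreducible rational pieces of bounded codimension. Proposition~\ref{prop:fixed rank} is the kernel-bundle trivialization already used for Theorem~\ref{thm:codim}. Neither turns vanishing data into a partition-rank decomposition over $\mathbb{K}$. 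As written, the proposal establishes only the left inequality. For the right one you must either cite the known theorem, as the paper does, or give a genuinely different argument that handles all higher-order behaviour of $f'$, not just first-order data at one point.
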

\begin{proof}
The right inequality is established in \cite[Theorem~A.1]{kazhdan2024schmidt}.  To prove the left inequality,  we proceed by induction on $d$.  The case of $d = 2$ is clear since $\AR_\mathbb{K}(f)= \PR_\mathbb{K}(f)=\rank(f)$.  Suppose that the inequality holds for any $d \le k-1$ for some integer $k \ge 3$.  We prove the inequality for $d = k$.   By Proposition~\ref{prop-6}, it is sufficient to prove $\AR_{\mathbb{K}} (f) \le 1$ if $\PR_{\mathbb{K}} (f) = 1$.  Without loss of generality, we may assume that $f(x_1,\dots,  x_k) = g (x_1,\dots,  x_r) h(x_{r+1},\dots,  x_k)$ for some integer $r \in [k]$,  $g \in \Hom(\mathbb{K}^{n_1} \times \cdots \times \mathbb{K}^{n_r},\mathbb{K})$ and $h \in \Hom(\mathbb{K}^{n_{r+1}} \times \cdots \times \mathbb{K}^{n_k},\mathbb{K})$.  If $r = 1$,  Proposition~\ref{prop-2} leads to $\AR_{\mathbb{K}}(f) = 1$.  If $r \ge 2$,  we consider $\widehat{h}$ in $\Hom(\mathbb{K}^{n_{r+1}} \times \cdots \mathbb{K}^{n_k} \times \mathbb{K},  \mathbb{K})$ defined by
\[
\widehat{h}(x_{r+1},\dots,  x_k,  y) = h(x_{r+1},\dots,  x_k) y.
\] 
Then $\PR_{\mathbb{K}}(\widehat{h}) = 1$ and the induction hypothesis implies that $\AR_{\mathbb{K}}( \widehat{h} ) \le 1$.  Let $F_{\widehat{h}}$ (resp.  $F_f$) be the multilinear map in $\Hom(\mathbb{K}^{n_{r+1}} \times \cdots \times \mathbb{K}^{n_{k-1}},  \mathbb{K}^{n_k})$ (resp.  $\Hom(\mathbb{K}^{n_1} \times \cdots \times \mathbb{K}^{n_{k-1}},  \mathbb{K}^{n_k})$) determined by $\widehat{h}$ (resp.  $f$). Then $\mathbb{K}^{n_1} \times \cdots \times \mathbb{K}^{n_r} \times Z_{F_{\widehat{h}}} \subseteq Z_{F_f} \subseteq \mathbb{K}^{n_1} \times \cdots \times \mathbb{K}^{n_k}$,  and this implies that 
\[
\AR_{\mathbb{K}}(f) = \codim \overline{Z}_{F_f} \le \codim \overline{Z}_{F_{\widehat{h}}} = \AR_{\mathbb{K}} (\widehat{h}) \le 1.  \qedhere
\]
\end{proof}
\begin{remark}
By Theorem~\ref{thm:pvsa},  we obtain 
\[
\AR_{\mathbb{K}} (f) \coloneqq  \min_{Z} \{ \codim Z \} \le \PR_{\mathbb{K}} (f) \le n_d,
\]
where $Z$ ranges over all irreducible components of $\overline{Z_{F_f}(\mathbb{K})}$ and $F_f$ is the multilinear map as in Definition~\ref{def-ar}.  For comparison,  we recall that Theorem~\ref{thm:Krull} yields 
\[
\max_{Z} \{ \codim Z \} \le 2^d n_d,
\] 
which provides an upper bound for the codimensions of all irreducible components of $\overline{Z_{F_f}(\mathbb{K})}$.
\end{remark} 
 
\begin{proposition}[Partition rank of direct sum]\label{prop-9}
Suppose that $\mathbb{K}$ is an infinite field and $d$ is an integer such that $d \ge 2$.  For each $f \in \Hom(\mathbb{K}^{n_1} \times \cdots \times \mathbb{K}^{n_d},  \mathbb{K})$ and positive integer $k$,  the following holds:
\[
\dfrac{k}{2^{d-1}-1}\PR_{\mathbb{K}}(f)\le \PR_{\mathbb{K}}(f^{\oplus k})\le k \PR_{\mathbb{K}}(f).
\]
\end{proposition}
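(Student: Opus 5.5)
The upper bound $\PR_{\mathbb{K}}(f^{\oplus k}) \le k\,\PR_{\mathbb{K}}(f)$ is elementary. Write $f = \sum_{i=1}^{r} f_i$ with $r = \PR_{\mathbb{K}}(f)$ and each $\PR_{\mathbb{K}}(f_i) = 1$. Then $f^{\oplus k}$ is the sum, over the $k$ diagonal blocks, of the $kr$ functions obtained by composing each $f_i$ with the coordinate projections onto the $t$-th block ($t \in [k]$). Such a composition is still of the form $g\,h$ with $g,h$ multilinear in complementary sets of factors, since precomposition with linear projections preserves this factorization. Hence each piece has partition rank at most one, and the bound follows.

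For the lower bound, I will pass through the analytic rank, using Theorem~\ref{thm:pvsa} and the additivity in Proposition~\ref{prop-6}. The chain is
\[
\PR_{\mathbb{K}}(f^{\oplus k}) \ge \AR_{\mathbb{K}}(f^{\oplus k}) = k\,\AR_{\mathbb{K}}(f) \ge \frac{k}{2^{d-1}-1}\PR_{\mathbb{K}}(f).
\]
The three steps are justified as follows.
\begin{enumerate}[label = (\roman*)]
\item The first inequality is the left inequality of Theorem~\ref{thm:pvsa}, applied to $f^{\oplus k} \in \Hom(\mathbb{K}^{kn_1}\times\cdots\times\mathbb{K}^{kn_d},\mathbb{K})$.
\item The equality follows by applying Proposition~\ref{prop-6}\ref{prop-6:item1} $k-1$ times, inductively on $k$, writing $f^{\oplus k} = f^{\oplus (k-1)} \oplus f$.
\item The last inequality is the right inequality of Theorem~\ref{thm:pvsa}, applied to $f$ itself, which gives $\AR_{\mathbb{K}}(f) \ge \PR_{\mathbb{K}}(f)/(2^{d-1}-1)$.
\end{enumerate}

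The only point needing care is the additivity of $\AR_{\mathbb{K}}$ under direct sums, which the paper asserts "by definition." To see it, note that $Z_{F_{f\oplus g}}(\mathbb{K})$ is not literally a product, because the direct sum mixes blocks. However, for $f \oplus g$ the slice map sends $(x_1\oplus y_1,\dots,x_{d-1}\oplus y_{d-1})$ to $F_f(x)\oplus F_g(y)$ in $\mathbb{K}^{n_d}\oplus\mathbb{K}^{m_d}$. So the zero set of the slicing in the first $d-1$ factors is $Z_{F_f}(\mathbb{K}) \times Z_{F_g}(\mathbb{K})$, after reordering coordinates and taking $F$ with values in the last factor as in Definition~\ref{def-ar}. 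The Zariski closure of a product of subsets of $\mathbb{K}^a$ and $\mathbb{K}^b$ over an infinite field is the product of the closures. Codimension is additive for products, so $\AR_{\mathbb{K}}(f\oplus g) = \AR_{\mathbb{K}}(f) + \AR_{\mathbb{K}}(g)$. This is the step I would double-check; it is routine, and the rest is bookkeeping.
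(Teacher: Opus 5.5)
Your proposal is correct and matches the paper's proof: the upper bound comes from the definition of partition rank, and the lower bound is the same chain $\PR_{\mathbb{K}}(f^{\oplus k}) \ge \AR_{\mathbb{K}}(f^{\oplus k}) = k\,\AR_{\mathbb{K}}(f) \ge \frac{k}{2^{d-1}-1}\PR_{\mathbb{K}}(f)$, obtained from Theorem~\ref{thm:pvsa} and Proposition~\ref{prop-6}\ref{prop-6:item1}. Your verification of additivity is sound and fills in the step the paper only states: the slice zero set of $f\oplus g$ is $Z_{F_f}(\mathbb{K})\times Z_{F_g}(\mathbb{K})$, and both closures and codimensions are compatible with products.
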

\begin{proof}
The inequality $\PR_{\mathbb{K}}(f^{\oplus k})\le k\PR_{\mathbb{K}}(f)$ follows immediately from the definition of the partition rank.  Theorem~\ref{thm:pvsa} together with Proposition~\ref{prop-6} implies that
\[\PR_{\mathbb{K}}(f^{\oplus k})\ge \AR_{\mathbb{K}}(f^{\oplus k})=k\AR_{\mathbb{K}}(f)\ge \dfrac{k}{2^{d-1}-1}\PR_{\mathbb{K}}(f).  \qedhere
\]
\end{proof}
Next,  we prove Theorem~\ref{thm:stability}--\ref{thm:stability:item1}, resolving Conjecture~\ref{conj:stabilitypr} for perfect infinite fields.  By choosing bases,  $f$ may be identified with the same tensor $\mathsf{T}_{f} \in \mathbb{K}^{n_1} \otimes \cdots \otimes \mathbb{K}^{n_d} \subseteq \mathbb{F}^{n_1} \otimes \cdots \otimes \mathbb{F}^{n_d} $ for any field extension $\mathbb{F}/\mathbb{K}$.  Note that we must have $\PR_{\mathbb{F}}(f^{\mathbb{F}}) = \PR_{\mathbb{F}} (\mathsf{T}_f)$.
\begin{theorem}[Stability of partition rank]\label{thm:AKZ}
Let $\mathbb{K}$ be a perfect infinite field and let $d \ge 2$ be an integer.  For any $f \in \Hom(\mathbb{K}^{n_1} \times \cdots \times \mathbb{K}^{n_d},  \mathbb{K})$,  we have
\[
\PR_{\overline{\mathbb{K}}}(f) \le  \PR_{\mathbb{K}}(f)\le 6(2^{d-1}-1)\PR_{\overline{\mathbb{K}}}(f).
\]
\end{theorem}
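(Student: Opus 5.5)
The left inequality is immediate: any decomposition of $f$ into partition-rank-one terms over $\mathbb{K}$ is also such a decomposition over $\overline{\mathbb{K}}$. For the right inequality, Theorem~\ref{thm:pvsa} gives $\PR_{\mathbb{K}}(f)\le (2^{d-1}-1)\AR_{\mathbb{K}}(f)$. It therefore suffices to prove
\[
\AR_{\mathbb{K}}(f)=\codim \overline{Z_{F_f}(\mathbb{K})}\le 6\,\PR_{\overline{\mathbb{K}}}(f)
\]
for perfect infinite $\mathbb{K}$. Since geometric rank is bounded by partition rank over $\overline{\mathbb{K}}$, namely $\GR(f)\le \PR_{\overline{\mathbb{K}}}(f)$ by the argument of Theorem~\ref{thm:pvsa} applied over $\overline{\mathbb{K}}$, a natural route is to compare $\codim\overline{Z_{F_f}(\mathbb{K})}$ with $\codim Z_{F_f}(\overline{\mathbb{K}})$. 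Example~\ref{example1} shows these can genuinely differ, so a constant loss is unavoidable.

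My plan is to work directly from a partition rank decomposition over $\overline{\mathbb{K}}$. Write $f=\sum_{i=1}^r g_ih_i$ over $\overline{\mathbb{K}}$ with $r=\PR_{\overline{\mathbb{K}}}(f)$. All $g_i,h_i$ are defined over a finite extension $\mathbb{L}/\mathbb{K}$, which is separable because $\mathbb{K}$ is perfect; pass to its Galois closure, of degree $e$. Taking Galois conjugates yields $\mathbb{K}$-rational data: the common zero set of all conjugates of the $g_i$ (or $h_i$) is defined over $\mathbb{K}$ by Galois descent. The naive count, however, gives codimension $\le e r$, which depends on the field. So instead of descending the full decomposition, I will use it only to produce a subvariety over $\overline{\mathbb{K}}$ with many $\mathbb{K}$-points. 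The idea is to show that $\overline{Z_{F_f}(\mathbb{K})}$ contains an irreducible $\mathbb{K}$-rational-dense subvariety of codimension $O(r)$.

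Concretely, the key tool is Theorem~\ref{thm:Krull}, or rather its proof scheme: starting at $0$, build inductively, slot by slot, irreducible sets $W_s$ whose closures have controlled codimension. I would apply the codimension formula Theorem~\ref{thm:codim} (in the form of Corollary~\ref{cor:AR}) recursively in $d$. Then $\AR_{\mathbb{K}}(f)\le r'+\codim\overline{W_{r',F_f}(\mathbb{K})}$ for a good choice of $r'$. Over $\overline{\mathbb{K}}$, the analogous quantity with $\overline{\mathbb{K}}$-points is at most $\GR(f)\le r$, and the task reduces to showing that the locus where slices have rank $\le r'$ has $\mathbb{K}$-points Zariski dense in a component of comparable codimension. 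Over a perfect field, the singular locus of a variety defined over $\mathbb{K}$ is defined over $\mathbb{K}$, and smooth geometrically irreducible $\mathbb{K}$-varieties with one rational point have dense rational points when they are unirational. The determinantal loci of low-rank matrices are rational (Proposition~\ref{prop:fixed rank} gives explicit rational parametrizations), which is where I expect perfectness to enter.

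The main obstacle is the constant: showing that the loss at each step is bounded absolutely rather than depending on $d$ or on $[\mathbb{L}:\mathbb{K}]$. I would first try to prove $\AR_{\mathbb{K}}(f)\le 6\,\PR_{\overline{\mathbb{K}}}(f)$ as follows. Reduce to the case where $f$ restricted to a $\mathbb{K}$-rational linear subspace of codimension $\le 3r$ in each relevant factor vanishes, obtained by Galois-invariant linear-algebra arguments on the span of the conjugates of the partition rank pieces. Then apply Proposition~\ref{prop-6}\ref{prop-6:item2} together with a factor-$2$ loss from Theorem~\ref{thm:codim} to obtain the constant $6$. If that fails, the fallback is to establish a weaker field-independent constant via Theorem~\ref{thm:Krull}'s $2^d$-type bounds.
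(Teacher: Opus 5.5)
Your reduction is sound as far as it goes. The left inequality is trivial, and by Theorem~\ref{thm:pvsa} it would suffice to prove $\AR_{\mathbb{K}}(f)\le 6\,\PR_{\overline{\mathbb{K}}}(f)$, which reproduces the stated constant. But that inequality is the whole content of the theorem, and none of the routes you sketch establishes it, so there is a genuine gap.

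\begin{itemize}
\item \emph{The linear-subspace reduction fails for $d\ge 4$.} Take $f=g(x_1,x_2)\,h(x_3,x_4)$ with $g,h$ nondegenerate bilinear forms on $\mathbb{K}^n\times\mathbb{K}^n$. Then $\PR_{\overline{\mathbb{K}}}(f)=1$, yet $f$ vanishes on $V_1\times\cdots\times V_4$ only if $\codim V_1+\codim V_2\ge n$ or $\codim V_3+\codim V_4\ge n$. Even for $d=3$, producing $\mathbb{K}$-rational subspaces of codimension $O(r)$ from the Galois conjugates is exactly the $[\mathbb{L}:\mathbb{K}]$-dependence you identified yourself, not a routine linear-algebra step. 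The ``factor $2$ from Theorem~\ref{thm:codim}'' is also not an argument, since that theorem is an exact identity.
\item \emph{The rational-points route does not apply.} Theorem~\ref{thm:codim} and Corollary~\ref{cor:AR} never compare $\mathbb{K}$ with $\overline{\mathbb{K}}$. The set $W_{r',F_f}$ is the preimage of a determinantal locus under the multilinear map $\eta_{F_f}$, not a determinantal variety, so Proposition~\ref{prop:fixed rank} gives it no unirationality. It is also cut out by minors of degree $r'+1$ in each slot, so recursing leaves the multilinear setting. Your ``density of $\mathbb{K}$-points in a component'' is just the original problem restated for a non-multilinear variety.
\item \emph{The fallback via Theorem~\ref{thm:Krull} reintroduces field dependence.} That theorem bounds codimension by $2^d$ times the number of $\mathbb{K}$-rational equations. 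Descending the $r$ equations $g_i=0$ from $\mathbb{L}$ to $\mathbb{K}$ produces $[\mathbb{L}:\mathbb{K}]\,r$ equations.
\end{itemize}

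The missing idea is the paper's amplification by direct sums, which divides out the field degree rather than descending any geometry. Let $\mathbb{L}/\mathbb{K}$ have degree $m$ and carry a decomposition with $r=\PR_{\overline{\mathbb{K}}}(f)$ terms. The paper imports from \cite[Proposition~3.1 \& Lemma~5.1]{chen2024stability} a restriction-of-scalars bound, and perfectness is used only there:
\[
\PR_{\mathbb{K}}(f^{\oplus k})\le(2m-1)\PR_{\mathbb{L}}(f),\qquad k=\lceil(m-1)/2\rceil.
\]
It combines this with Proposition~\ref{prop-9}, which rests on additivity $\AR_{\mathbb{K}}(f^{\oplus k})=k\,\AR_{\mathbb{K}}(f)$ (Proposition~\ref{prop-6}\ref{prop-6:item1}) together with Theorem~\ref{thm:pvsa}. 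In your language this gives $k\,\AR_{\mathbb{K}}(f)\le\PR_{\mathbb{K}}(f^{\oplus k})\le(2m-1)r$. Hence $\AR_{\mathbb{K}}(f)\le\frac{2m-1}{\lceil(m-1)/2\rceil}\,r\le 6r$ for $m\ge 2$, and trivially for $m=1$. That is precisely your intermediate inequality, so your target was the right one. The mechanism, though, is that a degree-$m$ extension supplies about $m/2$ copies of $f$ over $\mathbb{K}$ at total cost $(2m-1)r$, and additivity of $\AR_{\mathbb{K}}$ turns this into a per-copy bound. No statement about density of $\mathbb{K}$-points in $Z_{F_f}(\overline{\mathbb{K}})$ is needed.
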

\begin{proof}
The left inequality is proved in \cite[Lemma~5.1]{chen2024stability},  thus we only need to prove the right inequality.  Let $\mathbb{L}/\mathbb{K}$ be a finite extension of $\mathbb{K}$ such that $\mathsf{T}_f$ admits a decomposition $\mathsf{T}_f = \sum_{i=1}^r \mathsf{T}_i$,  where $r = \PR_{\overline{\mathbb{K}}} (\mathsf{T}_f)$,  $\mathsf{T}_i \in \mathbb{L}^{n_1} \otimes \cdots \otimes \mathbb{L}^{n_d}$ and $\PR_{\overline{\mathbb{K}}}(\mathsf{T}_i) = 1$.  It is clear that $\PR_{\mathbb{L}} (f) = \PR_{\mathbb{L}}  (\mathsf{T}_f) = r$.  Denote $m \coloneqq [\mathbb{L} : \mathbb{K}]$.  

By \cite[Proposition~3.1 \& Lemma~5.1]{chen2024stability},  we obtain 
\begin{equation}\label{thm:AKZ:eq1}
\PR_{\mathbb{K}}(f^{\oplus \lceil\frac{m-1}{2}\rceil}) \le \PR_{\mathbb{K}}(f)\le  (2[\mathbb{L}:\mathbb{K}]-1) \PR_{\mathbb{L}}(f).
\end{equation}
Moreover,  Proposition~\ref{prop-9} implies that 
\begin{equation}\label{thm:AKZ:eq2}
\frac{\lceil\frac{m-1}{2}\rceil}{2^{d-1}-1}\PR_{\mathbb{K}}(T) \le 
\PR_{\mathbb{K}}(f^{\oplus \lceil\frac{m-1}{2}\rceil}). 
\end{equation}
Combining \eqref{thm:AKZ:eq1} with \eqref{thm:AKZ:eq2},  we may derive 
\[
\PR_{\mathbb{K}}(f)\le \dfrac{(2^{d-1}-1)(2m-1)}{\lceil\frac{m-1}{2}\rceil}\PR_{\mathbb{L}}(f)\le 6(2^{d-1}-1)\PR_{\mathbb{L}}(f) = 6(2^{d-1}-1)r.  \qedhere
\]
\end{proof}
\begin{remark}
Theorem~\ref{thm:AKZ} follows from Proposition~\ref{prop-9},  which bounds $\PR_{\mathbb{K}}(f^{\oplus k})$ in terms of $\PR_{\mathbb{K}} (f)$.  By \cite{chen2024stability} and \cite{moshkovitz2024uniform},  the stability of partition rank over a finite field $\mathbb{F}_q$ is equivalent to
\begin{equation}\label{eq:asymp-directsum}
\PR_{\mathbb{F}_q} (f) = O_d \Big( \limsup_{k \to \infty} \frac{\PR_{\mathbb{F}_q} (f^{\oplus k})}{k} \Big).
\end{equation}
It is clear that \eqref{eq:asymp-directsum} holds whenever one has a bound similar to that in Proposition~\ref{prop-9} for finite fields. Therefore, it is reasonable to expect a proof of Conjecture~\ref{conj:stabilitypr} over finite fields by establishing an analogue of Proposition~\ref{prop-9}.
\end{remark}
Consequently,  we may establish Theorem~\ref{thm:linear}--\ref{thm:linear:item2} concerning the linear equivalence between partition rank and geometric rank.
\begin{theorem}[Partition rank vs. Geometric rank]\label{thm:pvsg}
Let $\mathbb{K}$ be a perfect infinite field and let $d \ge 2$ be an integer.  For any $f\in \Hom(\mathbb{K}^{n_1} \times \cdots \mathbb{K}^{n_d},  \mathbb{K})$,  we have 
\[
\GR(f) \le \PR_{\mathbb{K}}(f) \le 6(2^{d-1}-1)^{2}\GR(f).
    \]
\end{theorem}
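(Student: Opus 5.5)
The plan is to sandwich $\PR_{\mathbb{K}}(f)$ between multiples of $\GR(f)$. The two main inputs are Theorem~\ref{thm:AKZ}, which compares $\PR_{\mathbb{K}}$ with $\PR_{\overline{\mathbb{K}}}$, and Theorem~\ref{thm:pvsa} applied over the algebraically closed field $\overline{\mathbb{K}}$. For the latter I use that $\AR_{\overline{\mathbb{K}}}(f) = \codim \overline{Z_{F_f}(\overline{\mathbb{K}})} = \codim Z_{F_f}(\overline{\mathbb{K}}) = \GR(f)$. This identification holds because over an algebraically closed field the set $Z_{F_f}(\overline{\mathbb{K}})$ is already Zariski closed, so taking its closure as in \eqref{eq:Xclosure} changes nothing.

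For the left inequality, I first apply the left inequality of Theorem~\ref{thm:pvsa} over the infinite field $\overline{\mathbb{K}}$ to $f^{\overline{\mathbb{K}}}$. This gives $\GR(f) = \AR_{\overline{\mathbb{K}}}(f) \le \PR_{\overline{\mathbb{K}}}(f)$. Next, the left inequality of Theorem~\ref{thm:AKZ} (equivalently the trivial fact that a $\mathbb{K}$-decomposition is also a $\overline{\mathbb{K}}$-decomposition) gives $\PR_{\overline{\mathbb{K}}}(f) \le \PR_{\mathbb{K}}(f)$. Chaining the two yields $\GR(f) \le \PR_{\mathbb{K}}(f)$. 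Alternatively, one could argue directly: $Z_{F_f}(\mathbb{K}) \subseteq Z_{F_f}(\overline{\mathbb{K}})$, so $\GR(f) \le \AR_{\mathbb{K}}(f) \le \PR_{\mathbb{K}}(f)$.

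For the right inequality, the right inequality of Theorem~\ref{thm:AKZ} gives $\PR_{\mathbb{K}}(f) \le 6(2^{d-1}-1)\PR_{\overline{\mathbb{K}}}(f)$, and this is where the hypothesis that $\mathbb{K}$ is perfect and infinite is used. Then the right inequality of Theorem~\ref{thm:pvsa} over $\overline{\mathbb{K}}$ (the Kazhdan--Ziegler bound \cite[Theorem~A.1]{kazhdan2024schmidt}) gives $\PR_{\overline{\mathbb{K}}}(f) \le (2^{d-1}-1)\AR_{\overline{\mathbb{K}}}(f) = (2^{d-1}-1)\GR(f)$. Multiplying the two bounds gives exactly the constant $6(2^{d-1}-1)^2$.

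The only point that needs care is the identification $\AR_{\overline{\mathbb{K}}}(f) = \GR(f)$. Concretely, one must check that Definition~\ref{def-ar} applied with base field $\overline{\mathbb{K}}$ agrees with the geometric rank. This uses only the Nullstellensatz-level fact that the vanishing locus in $\overline{\mathbb{K}}^n$ of the ideal of a closed set is that set itself. Everything else is a direct chaining of earlier results, so I expect no real obstacle beyond this bookkeeping.
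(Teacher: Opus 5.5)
Your proposal is correct and matches the paper's proof. The right inequality is the same chain as in the paper,
$\PR_{\mathbb{K}}(f)\le 6(2^{d-1}-1)\PR_{\overline{\mathbb{K}}}(f)\le 6(2^{d-1}-1)^2\AR_{\overline{\mathbb{K}}}(f)=6(2^{d-1}-1)^2\GR(f)$, obtained from Theorems~\ref{thm:AKZ} and~\ref{thm:pvsa}. The only difference is cosmetic: for $\GR(f)\le\PR_{\mathbb{K}}(f)$ the paper simply cites \cite[Theorem~5]{kopparty2020geometric}, while you rederive it from the left inequality of Theorem~\ref{thm:pvsa}, either over $\overline{\mathbb{K}}$ or via $\overline{Z_{F_f}(\mathbb{K})}\subseteq Z_{F_f}(\overline{\mathbb{K}})$, and both routes are valid.
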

\begin{proof}
It suffices to prove the right inequality,  since the left one is proved in \cite[Theorem~5]{kopparty2020geometric}.  According to Theorem~\ref{thm:pvsa},  it holds that $\PR_{\overline{\mathbb{K}}}(f) \le (2^{d-1} - 1) \AR_{\overline{\mathbb{K}}}(f)$.  We observe that $\AR_{\overline{\mathbb{K}}} (f) = \GR(f)$.  We conclude from Theorem~\ref{thm:AKZ} that
\[
\PR_{\mathbb{K}}(f) \le 6(2^{d-1}-1)\PR_{\overline{\mathbb{K}}}(f) \le 6(2^{d-1}-1)^2 \GR(f).  \qedhere
\]
\end{proof}

\subsection{Ranks of polynomials}
Suppose that $\ch(\mathbb{K}) = 0$ or $\ch(\mathbb{K}) > d$.  The \emph{polarization} of $P \in \mathbb{K}[x_1,\dots, x_n]_d$ is the unique symmetric multilinear function $f_P\in \Hom(\mathbb{K}^n \times \cdots \times \mathbb{K}^n,  \mathbb{K})$ satisfying $f_P(\mathsf{v},\dots,  \mathsf{v}) = P(\mathsf{v})$ for any $\mathsf{v} \in \mathbb{K}^n$.  The strength of $P$ is linearly equivalent to the partition rank of $f_P$,  as follows from the next lemma.
\begin{lemma}\cite[Claim~3.2]{LZ24}\label{lem:str-pr}
Let $\mathbb{K}$ be a field with $\ch(\mathbb{K}) = 0$ or $\ch(\mathbb{K}) > d \ge 2$.  For any $P \in \mathbb{K}[x_1,\dots,  x_n]_d$,  we have 
\[
\str_{\mathbb{K}} (P) \le \PR_{\mathbb{K}} (f_P) \le  \binom{d}{\lfloor d/2 \rfloor} \str_{\mathbb{K}} (P),
\]
where $f_P \in \Hom( \mathbb{K}^n \times \cdots \times \mathbb{K}^n,  \mathbb{K})$ is the polarization of $P$.
\end{lemma}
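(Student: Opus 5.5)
The approach is to compare the two sides through symmetrization and restriction to the diagonal, treating each inequality separately.

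\emph{Lower bound $\str_{\mathbb{K}}(P)\le \PR_{\mathbb{K}}(f_P)$.} I will restrict to the diagonal. Write $f_P=\sum_{i=1}^r g_i h_i$ with $r=\PR_{\mathbb{K}}(f_P)$, where each $g_i$ is multilinear in a nonempty proper subset $S_i\subsetneq[d]$ of the variables and $h_i$ is multilinear in the complement. Substituting $x_1=\cdots=x_d=x$ gives
\[
P(x)=f_P(x,\dots,x)=\sum_{i=1}^r g_i(x,\dots,x)\,h_i(x,\dots,x).
\]
Here $g_i(x,\dots,x)$ is a form of degree $|S_i|\in[1,d-1]$ and $h_i(x,\dots,x)$ is a form of degree $d-|S_i|\in[1,d-1]$. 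Each summand therefore has strength at most one (a summand that vanishes can simply be dropped), so $\str_{\mathbb{K}}(P)\le r$. This direction does not use the characteristic assumption.

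\emph{Upper bound $\PR_{\mathbb{K}}(f_P)\le \binom{d}{\lfloor d/2\rfloor}\str_{\mathbb{K}}(P)$.} Write $P=\sum_{i=1}^s Q_iR_i$ with $s=\str_{\mathbb{K}}(P)$, $\deg Q_i=a_i$ and $\deg R_i=d-a_i$. Because $\ch(\mathbb{K})=0$ or $\ch(\mathbb{K})>d$, polarization is linear and unique, so $f_P=\sum_i f_{Q_iR_i}$. Hence it suffices to show
\[
\PR_{\mathbb{K}}(f_{QR})\le \binom{d}{a}
\]
for a single product $QR$, and then to bound $\binom{d}{a}\le\binom{d}{\lfloor d/2\rfloor}$.

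To do this I will use the explicit polarization formula, valid under the characteristic hypothesis:
\[
f_{QR}(x_1,\dots,x_d)=\binom{d}{a}^{-1}\sum_{\substack{S\subseteq[d]\\ |S|=a}} f_Q(x_S)\,f_R(x_{[d]\setminus S}).
\]
To verify it, note that the right-hand side is symmetric and multilinear. Evaluating on the diagonal, every summand equals $Q(x)R(x)$, and there are $\binom{d}{a}$ of them, so the diagonal value is exactly $Q(x)R(x)$. By uniqueness of the polarization the identity follows; the inverse $\binom{d}{a}^{-1}$ exists because $\ch(\mathbb{K})>d$ or is zero.

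Each summand $f_Q(x_S)f_R(x_{S^c})$ is a product of a multilinear function of $x_S$ and one of $x_{S^c}$ with $S$ a nonempty proper subset, so it has partition rank at most one. Scaling by the invertible constant $\binom{d}{a}^{-1}$ does not change this. This gives the desired bound on $\PR_{\mathbb{K}}(f_{QR})$.

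\emph{Main obstacle.} The only delicate step is checking the polarization identity, in particular the normalization constant, together with the requirement that it be invertible in $\mathbb{K}$. Everything else is bookkeeping.
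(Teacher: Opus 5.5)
Your proof is correct and is the standard argument behind \cite[Claim~3.2]{LZ24}; the paper gives no proof of its own and simply quotes that result. You restrict a partition-rank decomposition of $f_P$ to the diagonal for the lower bound, and for the upper bound you use the identity $f_{QR}=\binom{d}{a}^{-1}\sum_{|S|=a} f_Q(x_S)f_R(x_{[d]\setminus S})$, justified by symmetry, multilinearity and uniqueness of polarization when $d!\in\mathbb{K}^{\times}$ (you leave implicit that over a finite field these are polynomial identities, which holds since $d<\ch(\mathbb{K})\le|\mathbb{K}|$).
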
 
By Lemma~\ref{lem:str-pr},  the stability of strength is an immediate consequence of that of partition rank.
\begin{lemma}[Stability of strength]\label{thm:str-stability}
Suppose that $\ch(\mathbb{K}) = 0$ or $\mathbb{K}$ is infinite with $\ch(\mathbb{K}) > d \ge 2$.  For any $P \in \mathbb{K}[x_1,\dots,  x_n]_d$,  we have 
\[
\str_{\overline{\mathbb{K}}} (P)
 \le \str_{\mathbb{K}} (P) \le 6(2^{d-1} - 1) \binom{d}{\lfloor d/2 \rfloor} \str_{\overline{\mathbb{K}}} (P).
\]
\end{lemma}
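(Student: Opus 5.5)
The plan is to sandwich $\str$ between the partition ranks of the polarization $f_P$ over $\mathbb{K}$ and over $\overline{\mathbb{K}}$. We use Lemma~\ref{lem:str-pr} twice, once over $\mathbb{K}$ and once over $\overline{\mathbb{K}}$, and apply Theorem~\ref{thm:AKZ} to $f_P$ in between.

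The left inequality $\str_{\overline{\mathbb{K}}}(P) \le \str_{\mathbb{K}}(P)$ is immediate. Any decomposition $P = \sum_{i=1}^r Q_i R_i$ with $Q_i,R_i \in \mathbb{K}[x_1,\dots,x_n]$ homogeneous of degree less than $d$ is also a decomposition over $\overline{\mathbb{K}}$.

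For the right inequality, first note that $\ch(\overline{\mathbb{K}}) = \ch(\mathbb{K})$, so Lemma~\ref{lem:str-pr} applies over both fields. Next, the polarization commutes with base change. The polarization $f_P$ is given by the explicit formula $f_P(\mathsf{v}_1,\dots,\mathsf{v}_d) = \frac{1}{d!}\sum_{\varnothing \ne S \subseteq [d]} (-1)^{d-|S|} P\big(\sum_{i\in S}\mathsf{v}_i\big)$, which makes sense because $d!$ is invertible. Its extension $f_P^{\overline{\mathbb{K}}}$ is symmetric multilinear and restricts to $P$ on the diagonal, so by uniqueness it equals $f_{P}$ computed over $\overline{\mathbb{K}}$. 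The chain is then
\[
\str_{\mathbb{K}}(P) \le \PR_{\mathbb{K}}(f_P) \le 6(2^{d-1}-1)\PR_{\overline{\mathbb{K}}}(f_P) \le 6(2^{d-1}-1)\binom{d}{\lfloor d/2\rfloor}\str_{\overline{\mathbb{K}}}(P).
\]
The first and last steps come from Lemma~\ref{lem:str-pr}, over $\mathbb{K}$ and over $\overline{\mathbb{K}}$ respectively. The middle step is Theorem~\ref{thm:AKZ}.

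The only real obstacle is checking the hypotheses of Theorem~\ref{thm:AKZ}, which requires $\mathbb{K}$ to be perfect and infinite. There are two cases.
\begin{itemize}
\item If $\ch(\mathbb{K})=0$, then $\mathbb{K}$ is automatically perfect, and it is infinite since it contains $\mathbb{Q}$.
\item If $\ch(\mathbb{K}) = p > d$, infiniteness is assumed, but perfectness is not stated in the lemma. I would read the lemma as intended under the standing hypothesis of Theorem~\ref{thm:stability}\ref{thm:stability:item2}, namely that $\mathbb{K}$ is perfect, and add that assumption. Without it, Theorem~\ref{thm:AKZ} does not apply as stated.
\end{itemize}
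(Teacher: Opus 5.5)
Your proposal is correct and is essentially the paper's own proof. It uses the same chain $\str_{\mathbb{K}}(P)\le \PR_{\mathbb{K}}(f_P)\le 6(2^{d-1}-1)\PR_{\overline{\mathbb{K}}}(f_P)\le 6(2^{d-1}-1)\binom{d}{\lfloor d/2\rfloor}\str_{\overline{\mathbb{K}}}(P)$ via Lemma~\ref{lem:str-pr} and Theorem~\ref{thm:AKZ}, and you also make explicit that polarization commutes with base change, which the paper uses tacitly.

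Your caveat is correct too. The paper invokes Theorem~\ref{thm:AKZ} in the same way without addressing perfectness, so in positive characteristic the lemma needs the hypothesis that $\mathbb{K}$ is perfect, as in Theorem~\ref{thm:stability}\ref{thm:stability:item2}.
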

\begin{proof}
The left inequality trivially holds. According to Lemma~\ref{lem:str-pr},  we have 
\[
\str_{\mathbb{K}} (P) \le \PR_{\mathbb{K}} (f_P),\quad  \PR_{\overline{\mathbb{K}}} (f_P) \le  \binom{d}{\lfloor d/2 \rfloor} \str_{\overline{\mathbb{K}}} (P).
\]
Combining this with Theorem~\ref{thm:AKZ},  we obtain
\[
\str_{\mathbb{K}}(P) \le \PR_{\mathbb{K}}(f_P) \le 6(2^{d-1} - 1) \PR_{\overline{\mathbb{K}}} (f_P) \le 6(2^{d-1} - 1) \binom{d}{\lfloor d/2 \rfloor} \str_{\overline{\mathbb{K}}} (P).   \qedhere
\]
\end{proof}
Next,  we show that the strength is linearly equivalent to the Birch rank.  To this end,  we recall the following result on the linear equivalence between strength and Birch rank over algebraically closed fields.
\begin{lemma}\cite[Theorem~1.3]{kazhdan2024schmidt}\label{lem:brk-str}
Let $\mathbb{K}$ be an algebraically closed field with $\ch(\mathbb{K}) = 0$ or $\ch(\mathbb{K}) > d \ge 2$.  For any homogeneous polynomial $P \in \mathbb{K}[x_1,\dots,  x_n]$ of degree $d$,  the following holds: 
\[
\frac{\Brk (P)}{2} \le  \str_{\mathbb{K}} (P) \le (d-1) \Brk(P).
\]
\end{lemma}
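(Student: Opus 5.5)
The lemma is quoted from \cite[Theorem~1.3]{kazhdan2024schmidt}, so in the paper it can simply be cited. If I had to reprove it, I would treat the two inequalities separately. Throughout, $\mathbb{K}$ is algebraically closed, so $\codim$ agrees with the height of the defining ideal and Krull's principal ideal theorem applies. For $m=1$, $Z(P)_{\Sing}$ is the common zero locus of the partials $\partial P/\partial x_1,\dots,\partial P/\partial x_n$.

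\emph{Lower bound $\Brk(P)\le 2\str_{\mathbb{K}}(P)$.} Set $r=\str_{\mathbb{K}}(P)$ and write $P=\sum_{i=1}^r Q_iR_i$ with $Q_i,R_i$ homogeneous of positive degree. By the product rule,
\[
\nabla P=\sum_{i=1}^r \big(R_i\nabla Q_i+Q_i\nabla R_i\big),
\]
which vanishes on $V(Q_1,\dots,Q_r,R_1,\dots,R_r)$. This variety is a nonempty cone through $0$ cut out by $2r$ equations, so Krull's principal ideal theorem gives it codimension at most $2r$. Since it is contained in $Z(P)_{\Sing}$, we get $\Brk(P)\le 2r$.

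\emph{Upper bound $\str_{\mathbb{K}}(P)\le (d-1)\Brk(P)$.} This is the substantial direction, and I would argue by induction on $d$. The starting point is that $\ch(\mathbb{K})=0$ or $\ch(\mathbb{K})>d$, so Euler's identity $dP=\sum_j x_j\,\partial_jP$ holds and $d$ is invertible; hence $P$ lies in the ideal generated by its partials.

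Set $b=\Brk(P)$, so $X=Z(P)_{\Sing}$ is a cone of codimension $b$. I would choose a generic linear subspace $L$ of dimension $b$. Then $L\cap X=\{0\}$, so the partials have no common nonzero zero on $L$. The goal is to convert this into a decomposition of $P$ with about $(d-1)b$ reducible pieces. Concretely, I would try to peel off $b$ directional derivatives $\partial_{u_1}P,\dots,\partial_{u_b}P$ so that $P$ is congruent, modulo terms of the form (linear form)$\cdot$(degree $d-1$), to a polynomial of lower complexity. Each such term has strength $1$. I would then apply induction to the degree-$(d-1)$ derivatives, whose Birch rank is controlled by that of $P$.

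Each degree step should cost at most $b$ summands, and iterating from degree $d$ down to $2$ gives the factor $d-1$. The base case $d=2$ is clear: $P$ is a quadratic form, its Birch rank equals its rank $\rho$, and $\str(P)=\lceil \rho/2\rceil\le \rho$.

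\emph{Main obstacle.} The hard step is the inductive inequality: showing that a directional derivative, or a restriction to a hyperplane, loses only a controlled amount of Birch rank, so that the linear constant $d-1$ survives the induction. I would first try the restriction approach: restrict $P$ to a generic linear subspace, compare the singular loci via Krull, and use Euler's identity to express $P$ as its restriction plus $b$ terms of the form (linear form)$\cdot$(degree $d-1$). If the bookkeeping does not close, I would fall back on the argument of \cite{kazhdan2024schmidt}.
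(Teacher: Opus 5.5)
Citing \cite[Theorem~1.3]{kazhdan2024schmidt} is exactly what the paper does; it gives no proof of its own. Your argument for the left inequality is a correct, self-contained proof. $\nabla P$ vanishes on $V(Q_1,\dots,Q_r,R_1,\dots,R_r)$, which is a nonempty cone whose components all have codimension at most $2r$ by Krull's theorem. Hence $\Brk(P)\le 2\str_{\mathbb{K}}(P)$, and this part does not even use the characteristic hypothesis.

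Your sketch of the right inequality has a genuine gap. The inductive inequality (``each degree step costs at most $b$ summands'') is never stated, and both mechanisms you propose for it fail.

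\emph{Derivatives.} The Birch rank of a directional derivative is not controlled by that of $P$. For $d\ge 3$, let $P=x_1Q$ with $Q$ a generic form of degree $d-1$ in $x_2,\dots,x_n$. Then $\str_{\mathbb{K}}(P)=1$ and $\Brk(P)\le 2$, since the singular locus contains $\{x_1=Q=0\}$. But $\partial P/\partial x_1=Q$ has singular locus equal to the $x_1$-axis, so $\Brk(Q)=n-1$. Even for a generic direction $u$, the derivative $\partial_uP=u_1Q+x_1\partial_uQ$ has strength at least $\str_{\mathbb{K}}(Q)-1\ge (n-3)/2$, by your own lower bound. So feeding derivatives of $P$ into the degree-$(d-1)$ case gives nothing in terms of $\Brk(P)$. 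You would need a genuinely new way to choose the directions and to control the remainder.

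\emph{Restriction.} Restricting to a generic subspace $H$ of codimension $b$ gives only $\str_{\mathbb{K}}(P)\le b+\str_{\mathbb{K}}(P|_H)$, and no Euler identity is involved. But $P|_H$ still has degree $d$, and its Birch rank is at most $b$ and can equal $b$. For example, take $P=Q_1Q_2$ with generic quadrics in many variables. Then $P|_H$ is a product of two non-proportional irreducible quadrics, hence squarefree, so $\Brk(P|_H)=2=\Brk(P)$. The reason is that a form whose singular locus contains a hypersurface $\{h=0\}$ with $h$ irreducible is divisible by $h^2$. Nothing decreases, so this recursion is circular.

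As written, your proposal proves only the left inequality. The linear bound $\str_{\mathbb{K}}(P)\le (d-1)\Brk(P)$ rests entirely on the cited argument, which is also how the paper handles it.
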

The desired linear equivalence between strength and Birch rank is obtained by combining Lemmas~\ref{thm:str-stability} and~\ref{lem:brk-str}.
\begin{lemma}[Strength vs.  Birch rank]\label{lem:str-birch}
Suppose that $\ch(\mathbb{K}) = 0$ or $\mathbb{K}$ is infinite with $\ch(\mathbb{K}) > d \ge 2$.  For any homogeneous polynomial $P \in \mathbb{K}[x_1,\dots,  x_n]$ of degree $d$,  we have 
\[
\frac{\Brk (P)}{2} \le  \str_{\mathbb{K}}(P) \le 6(2^{d-1} - 1) \binom{d}{\lfloor d/2 \rfloor}(d-1) \Brk(P).
\]
\end{lemma}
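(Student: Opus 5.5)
The plan is to chain Lemma~\ref{thm:str-stability} with Lemma~\ref{lem:brk-str}, applied over $\overline{\mathbb{K}}$. The key point is that $\Brk(P)$ is defined through the singular locus $Z(P)_{\Sing}\subseteq \overline{\mathbb{K}}^n$. That locus is cut out by the partial derivatives of $P$, and these do not change under base extension. Hence $\Brk(P)$ is the same whether $P$ is viewed over $\mathbb{K}$ or over $\overline{\mathbb{K}}$. Moreover, $\overline{\mathbb{K}}$ is algebraically closed and has the same characteristic as $\mathbb{K}$, so the hypothesis $\ch = 0$ or $\ch > d$ carries over and Lemma~\ref{lem:brk-str} applies to $P$ regarded as a polynomial over $\overline{\mathbb{K}}$.

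For the lower bound, start from the trivial inequality $\str_{\overline{\mathbb{K}}}(P)\le \str_{\mathbb{K}}(P)$: any decomposition of $P$ over $\mathbb{K}$ is also one over $\overline{\mathbb{K}}$. This is the left inequality of Lemma~\ref{thm:str-stability}. Combining it with the left inequality of Lemma~\ref{lem:brk-str} gives
\[
\frac{\Brk(P)}{2}\le \str_{\overline{\mathbb{K}}}(P)\le \str_{\mathbb{K}}(P).
\]

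For the upper bound, use the right inequality of Lemma~\ref{thm:str-stability}, namely $\str_{\mathbb{K}}(P)\le 6(2^{d-1}-1)\binom{d}{\lfloor d/2\rfloor}\str_{\overline{\mathbb{K}}}(P)$. Then bound $\str_{\overline{\mathbb{K}}}(P)\le (d-1)\Brk(P)$ by the right inequality of Lemma~\ref{lem:brk-str}. Multiplying the constants gives exactly the stated bound.

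The only point needing care is the hypothesis of Lemma~\ref{thm:str-stability}, which is ultimately Theorem~\ref{thm:AKZ} and requires a perfect infinite field. Fields of characteristic $0$ are perfect but possibly not infinite only if finite, which cannot happen in characteristic $0$, so that case is fine. In positive characteristic $p>d$, the statement says only that $\mathbb{K}$ is infinite, while Theorem~\ref{thm:AKZ} needs perfectness. I would therefore either read the statement as inheriting the perfectness assumption from Lemma~\ref{thm:str-stability} (consistent with Theorem~\ref{thm:linear}\ref{thm:linear:item3}), or note that the constant is obtained exactly under the hypotheses under which Lemma~\ref{thm:str-stability} holds. Beyond this bookkeeping, the argument is a direct composition of inequalities, and no step presents a real obstacle.
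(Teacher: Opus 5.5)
Your argument is the paper's proof. It applies Lemma~\ref{lem:brk-str} to $P$ over $\overline{\mathbb{K}}$, where $\Brk(P)$ is unchanged, and sandwiches it with the two inequalities of Lemma~\ref{thm:str-stability}, giving exactly the stated chain and constant. Your perfectness caveat is well taken and applies equally to the paper: Lemma~\ref{thm:str-stability} is proved via Theorem~\ref{thm:AKZ}, which requires $\mathbb{K}$ to be perfect, so for imperfect infinite fields of characteristic $p>d$ neither argument justifies the bound until perfectness is added to the hypotheses, as in Theorem~\ref{thm:collective str-birc}.
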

\begin{proof}
Lemma~\ref{lem:brk-str} implies 
\[
\frac{\Brk (P)}{2} \le  \str_{\overline{\mathbb{K}}} (P) \le (d-1) \Brk(P).
\]
Thus,  Lemma~\ref{thm:str-stability} leads to the following: 
\[
\frac{\Brk (P)}{2} \le  \str_{\overline{\mathbb{K}}}(P)  \le \str_{\mathbb{K}}(P) \le 6(2^{d-1} - 1) \binom{d}{\lfloor d/2 \rfloor} \str_{\overline{\mathbb{K}}} (P) \le 6(2^{d-1} - 1) \binom{d}{\lfloor d/2 \rfloor}(d-1) \Brk(P).
\]
\end{proof}
Now we are ready to prove Theorem~\ref{thm:linear}--\ref{thm:linear:item3}.
\begin{theorem}[Collective strength vs.  collective Birch rank]\label{thm:collective str-birc}
Let $d \ge 2$ be an integer.  Suppose that either $\ch(\mathbb{K}) = 0$ or $\mathbb{K}$ is perfect and infinite with $\ch(\mathbb{K}) > d$.  For any $P_1,\dots,  P_m \in \mathbb{K}[x_1,\dots,  x_n]_d$,  we have 
\[
\frac{\Brk(P_1,\dots,  P_m)}{2} \le \str_{\mathbb{K}} (P_1,\dots,  P_m)  \le 6(2^{d-1} - 1) \binom{d}{\lfloor d/2 \rfloor}(d-1)m ( \Brk(P_1,\dots,  P_m) + m - 1).
\]
\end{theorem}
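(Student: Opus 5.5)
The plan is to prove the two inequalities separately. The lower bound is elementary. The upper bound reduces, through the $(d+1)$-tensor attached to the tuple, to Theorem~\ref{thm:AKZ} applied in one more factor, combined with a fibre-dimension argument over $\overline{\mathbb{K}}$ based on Lemma~\ref{lem:brk-str}.

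\emph{Lower bound.} Let $0 \ne c \in \mathbb{K}^m$ and put $P = \sum_i c_i P_i$ with $s = \str_{\mathbb{K}}(P)$. Write $P = \sum_{j\le s} Q_j R_j$, each $Q_j$, $R_j$ homogeneous of positive degree. On the common zero set of all $Q_j, R_j$ the product rule gives $\nabla P = 0$. Hence $\sum_i c_i \nabla P_i(\mathsf{v}) = 0$ there, so the Jacobian of $(P_1,\dots,P_m)$ has rank $<m$. That zero set has codimension $\le 2s$ by Krull, so $\Brk(P_1,\dots,P_m) \le 2\str_{\mathbb{K}}(P_1,\dots,P_m)$.

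\emph{Upper bound over $\overline{\mathbb{K}}$.} Let $X = Z(P_1,\dots,P_m)_{\Sing}$ and consider the incidence variety
\[
I = \{([c],\mathsf{v}) \in \mathbb{P}^{m-1}\times \overline{\mathbb{K}}^n : \textstyle\sum_i c_i \nabla P_i(\mathsf{v}) = 0\}.
\]
Its projection to the $\mathsf{v}$-factor is $X$. Take a top-dimensional component of $X$ and a component of $I$ dominating it. Counting dimensions of the fibres of the projection to $\mathbb{P}^{m-1}$, there is some $c \in \overline{\mathbb{K}}^m\setminus\{0\}$ with $\codim\{\nabla P_c = 0\} \le \Brk(P_1,\dots,P_m) + m - 1$, where $P_c = \sum_i c_i P_i$. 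Lemma~\ref{lem:brk-str} then gives $\str_{\overline{\mathbb{K}}}(P_c) \le (d-1)(\Brk + m - 1)$. This step is routine.

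\emph{Descent to $\mathbb{K}$, and the main obstacle.} The vector $c$ need not be $\mathbb{K}$-rational. To get around this I would encode the whole tuple in the $(d+1)$-linear form
\[
T(x_1,\dots,x_d,y) = \sum_i y_i f_{P_i}(x_1,\dots,x_d),
\]
which is defined over $\mathbb{K}$ and satisfies $T(\cdot,c) = f_{P_c}$.

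First bound $\PR_{\overline{\mathbb{K}}}(T)$. Change coordinates in $y$ so that $c$ becomes the first basis vector. This writes $T$ as $y'_1 f_{P_c}$ plus $m-1$ further terms, each linear in $y$ alone, and each of those has partition rank one. Together with Lemma~\ref{lem:str-pr}, this gives
\[
\PR_{\overline{\mathbb{K}}}(T) \le \tbinom{d}{\lfloor d/2\rfloor}(d-1)(\Brk+m-1) + m - 1.
\]
Next, Theorem~\ref{thm:AKZ} in $d+1$ factors gives $\PR_{\mathbb{K}}(T) \le 6(2^{d}-1)\,\PR_{\overline{\mathbb{K}}}(T)$. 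This constant is worse than the one claimed, so in the write-up I would rather track the argument through Lemma~\ref{thm:str-stability} applied to $f_{P_c}$ wherever possible.

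Finally, from a $\mathbb{K}$-decomposition of $T$ of length $t$ we must extract a rational $c' \in \mathbb{K}^m\setminus\{0\}$ with $\str_{\mathbb{K}}(P_{c'}) = O_d(t)$. Terms in which $y$ is grouped with at least one $x_j$ remain of partition rank $\le 1$ after substituting $y = c'$. The problematic terms are the $k$ terms of the form $\ell_j(y)g_j(x)$. If $k < m$, choose $c' \ne 0$ in $\mathbb{K}^m$ annihilating all $\ell_j$. If $k \ge m$, choose $c'$ killing all but at most $m-1$ of the $\ell_j$, and absorb the surviving $g_j$ by comparing with the coordinate expression of $T$. I expect this bookkeeping to be the main obstacle. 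It is the step where the factor $m$ in the stated bound should be spent, by bounding the leftover pieces by $m$ times the per-term cost, and where Lemma~\ref{lem:str-pr} converts $\PR_{\mathbb{K}}(f_{P_{c'}})$ back into $\str_{\mathbb{K}}(P_{c'})$.
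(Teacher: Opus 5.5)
Your lower bound and the incidence-variety step over $\overline{\mathbb{K}}$, which produces $c\in\overline{\mathbb{K}}^m\setminus\{0\}$ with $\codim Z(P_c)_{\Sing}\le\Brk(P_1,\dots,P_m)+m-1$, are fine. The descent through $T=\sum_{i=1}^m y_i f_{P_i}$, however, cannot work, and the obstruction is not bookkeeping. Each term $y_i f_{P_i}(x_1,\dots,x_d)$ already has partition rank one for the split $\{y\}\sqcup\{x_1,\dots,x_d\}$. So $\PR_{\mathbb{K}}(T)\le m$, no matter how large the strengths of the rational combinations $P_{c'}$ are, and $\PR_{\mathbb{K}}(T)$ carries no information about them. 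In particular, your bound $6(2^d-1)\big(\binom{d}{\lfloor d/2\rfloor}(d-1)(\Brk+m-1)+m-1\big)$ is weaker than the trivial bound $m$ as soon as $m\ge 2$, and a shortest $\mathbb{K}$-decomposition of $T$ may be the trivial one. After regrouping, your case $k\ge m$ becomes $T=\sum_{j=1}^m\ell_j(y)g_j(x)+R$ with $\ell_1,\dots,\ell_m$ a basis; the trivial decomposition has this form. Evaluating at the dual basis $c^{(j)}$ only returns $g_j=f_{P_{c^{(j)}}}-R(\cdot,c^{(j)})$, which is circular. For the same reason, Lemma~\ref{thm:str-stability} cannot be applied to $P_c$, since $P_c$ is not defined over $\mathbb{K}$.

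What is missing is a descent of the coefficient vector $c$ itself; this is where perfectness and the factor $m$ enter. Since $\mathbb{K}$ is perfect, the $\overline{\mathbb{K}}$-span $V\subseteq\overline{\mathbb{K}}^m$ of the Galois conjugates $\sigma(c)$ is Galois-stable, hence spanned by $V\cap\mathbb{K}^m$. Choose $0\ne c'\in V\cap\mathbb{K}^m$ and write $c'=\sum_{j=1}^k\beta_j\sigma_j(c)$ with $k\le\dim V\le m$. Because the $P_i$ have coefficients in $\mathbb{K}$, each $Z(P_{\sigma_j(c)})_{\Sing}=\sigma_j\big(Z(P_c)_{\Sing}\big)$ has the same codimension as $Z(P_c)_{\Sing}$. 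Moreover $Z(P_{c'})_{\Sing}\supseteq\bigcap_{j}Z(P_{\sigma_j(c)})_{\Sing}$. These are cones through the origin, so codimension is subadditive and $\Brk(P_{c'})\le m\big(\Brk(P_1,\dots,P_m)+m-1\big)$. Applying Lemma~\ref{lem:str-birch} to the $\mathbb{K}$-form $P_{c'}$ now gives exactly the stated constant. Alternatively, bound $\str_{\overline{\mathbb{K}}}(P_{c'})\le k\,\str_{\overline{\mathbb{K}}}(P_c)$ and apply Lemma~\ref{thm:str-stability}. The paper gets the upper bound by running the argument of \cite[Lemma~2.12]{baily2024strength} with Lemma~\ref{lem:str-birch} as input. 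The shape of its constant, $m$ times the constant of Lemma~\ref{lem:str-birch} evaluated at $\Brk+m-1$, is precisely what this descent produces.
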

\begin{proof}
The right inequality follows from the same argument in the proof of \cite[Lemma~2.12]{baily2024strength},  by replacing Theorem~1.4 there with Lemma~\ref{lem:str-birch}.  For the left inequality,  we choose $a_1,\dots,  a_m \in \mathbb{K}$ such that 
\[
\str_{\mathbb{K}} (P) = \str_{\mathbb{K}} (P_1,\dots,  P_m),\quad P \coloneqq a_1 P_1 + \cdots + a_m P_m \in \mathbb{K}[x_1,\dots,x_n]_d.
\]
We notice that if $\mathsf{v} \in Z(P)_{\Sing}$,  then it holds that $\mathsf{v} \in Z(P_1,\dots,  P_m)_{\Sing}$ since
\[
\begin{bmatrix}
\partial P_1 / \partial x_1(\mathsf{v}) & \cdots & \partial P_m / \partial x_1(\mathsf{v}) \\ 
\vdots & \ddots & \vdots \\
\partial P_1 / \partial x_n(\mathsf{v}) & \cdots & \partial P_m / \partial x_n (\mathsf{v})
\end{bmatrix} \begin{bmatrix}
a_1 \\
\vdots \\
a_m
\end{bmatrix} = 0.
\]
This together with Lemma~\ref{lem:brk-str} implies 
\[
\str_{\mathbb{K}} (P) \ge 
\str_{\overline{\mathbb{K}}} (P) \ge \frac{\Brk (P)}{2} \ge \frac{\Brk (P_1,\dots,  P_m)}{2}. \qedhere
 \]
\end{proof}
The following corollary of Theorem~\ref{thm:collective str-birc} is straightforward.
\begin{corollary}\label{cor:str-to-Birch}
Suppose that either $\ch(\mathbb{K}) = 0$ or $\mathbb{K}$ is perfect and infinite with $\ch(\mathbb{K}) > d \ge 2$.  Given $P_1,\dots,  P_m \in \mathbb{K}[x_1,\dots,  x_n]_d$ such that $\str_{\mathbb{K}} (P_1,\dots,  P_m) \ge 6(2^{d-1} - 1) \binom{d}{\lfloor d/2 \rfloor}(d-1)m (r + m - 1)$ for some $r \in [n]$,  it holds that $\Brk(P_1,\dots,  P_m) \ge r$.
\end{corollary}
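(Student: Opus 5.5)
The plan is to prove the contrapositive using the left inequality of Theorem~\ref{thm:collective str-birc}, which bounds $\str_{\mathbb{K}}$ above by an expression that is increasing in $\Brk$. Set $C \coloneqq 6(2^{d-1} - 1) \binom{d}{\lfloor d/2 \rfloor}(d-1)m$. Note that $C>0$, since $d \ge 2$ gives $2^{d-1}-1 \ge 1$, and both $d-1 \ge 1$ and $m \ge 1$.

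The right inequality of Theorem~\ref{thm:collective str-birc} applies under exactly the hypotheses of the corollary. It gives
\[
\str_{\mathbb{K}}(P_1,\dots,P_m) \le C\big(\Brk(P_1,\dots,P_m) + m - 1\big).
\]
Suppose, for contradiction, that $\Brk(P_1,\dots,P_m) \le r-1$. Since $\Brk$ is a codimension, it is an integer, so this is the negation of $\Brk \ge r$. Then
\[
\str_{\mathbb{K}}(P_1,\dots,P_m) \le C(r+m-2) < C(r+m-1).
\]
This contradicts the hypothesis $\str_{\mathbb{K}}(P_1,\dots,P_m) \ge C(r+m-1)$.

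There is no real obstacle here. The points to check are that $C>0$, which makes the last inequality strict, and that $\Brk$ takes integer values. Both are immediate. All the substantive input comes from Theorem~\ref{thm:collective str-birc}, and through it from the stability Theorem~\ref{thm:AKZ}.
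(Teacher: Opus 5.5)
Your argument is correct and is exactly the paper's: the paper simply calls the corollary a straightforward consequence of Theorem~\ref{thm:collective str-birc}, namely combining the hypothesis with the right-hand inequality $\str_{\mathbb{K}}(P_1,\dots,P_m) \le C(\Brk(P_1,\dots,P_m)+m-1)$ and cancelling $C>0$. Two cosmetic points: your first sentence says ``left inequality'' where you mean the right one, and the integrality of $\Brk$ is not needed, since $C(r+m-1)\le C(\Brk(P_1,\dots,P_m)+m-1)$ gives $r \le \Brk(P_1,\dots,P_m)$ directly.
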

Lastly, we turn to the proof of Theorem~\ref{thm:stability}--\ref{thm:stability:item2}, which settles Conjecture~\ref{conj:stabilitystr} for all infinite perfect fields.  
\begin{theorem}[Stability of collective strength]\label{thm:stability-collective-str}
Let $d \ge 2$ be an integer.  Suppose that either $\ch(\mathbb{K}) = 0$ or $\mathbb{K}$ is perfect and infinite with $\ch(\mathbb{K}) > d$.  For any $P_1,\dots,  P_m \in \mathbb{K}[x_1,\dots,  x_n]_d$,  the following inequalities hold: 
\[
\str_{\overline{\mathbb{K}}} (P_1,\dots,  P_m)  \le \str_{\mathbb{K}} (P_1,\dots,  P_m) \le 6(2^{d-1} - 1) \binom{d}{\lfloor d/2 \rfloor}(d-1)m (2 \str_{\overline{\mathbb{K}}} (P_1,\dots,  P_m) + m - 1).
\]
\end{theorem}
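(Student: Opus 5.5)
The plan is to chain Theorem~\ref{thm:collective str-birc} with a comparison between the collective Birch rank and the collective strength over $\overline{\mathbb{K}}$. The left inequality $\str_{\overline{\mathbb{K}}}(P_1,\dots,P_m)\le \str_{\mathbb{K}}(P_1,\dots,P_m)$ is immediate. Indeed, every nonzero $P\in\spa_{\mathbb{K}}\{P_1,\dots,P_m\}$ is also a nonzero element of $\spa_{\overline{\mathbb{K}}}\{P_1,\dots,P_m\}$. Moreover, any decomposition of $P$ into strength-one pieces over $\mathbb{K}$ is also such a decomposition over $\overline{\mathbb{K}}$. Hence the minimum over $\overline{\mathbb{K}}$ is at most the minimum over $\mathbb{K}$.

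For the right inequality, I would apply the right-hand bound of Theorem~\ref{thm:collective str-birc} over $\mathbb{K}$:
\[
\str_{\mathbb{K}}(P_1,\dots,P_m)\le 6(2^{d-1}-1)\binom{d}{\lfloor d/2\rfloor}(d-1)m\big(\Brk(P_1,\dots,P_m)+m-1\big).
\]
It then suffices to show
\[
\Brk(P_1,\dots,P_m)\le 2\str_{\overline{\mathbb{K}}}(P_1,\dots,P_m).
\]
The Birch rank is defined via $Z(P_1,\dots,P_m)_{\Sing}\subseteq\overline{\mathbb{K}}^n$, which is a purely geometric object. So $\Brk$ is the same whether we regard the tuple over $\mathbb{K}$ or over $\overline{\mathbb{K}}$.

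To prove this bound, I would run the left-inequality argument of Theorem~\ref{thm:collective str-birc} over the field $\overline{\mathbb{K}}$. Choose $a_1,\dots,a_m\in\overline{\mathbb{K}}$, not all zero, such that $Q\coloneqq\sum a_iP_i$ attains $\str_{\overline{\mathbb{K}}}(Q)=\str_{\overline{\mathbb{K}}}(P_1,\dots,P_m)$. At any $\mathsf{v}\in Z(Q)_{\Sing}$ the gradient of $Q$ vanishes. Then the Jacobian matrix $(\partial P_i/\partial x_j(\mathsf{v}))$ annihilates the nonzero vector $(a_1,\dots,a_m)^{T}$, so it has rank $<m$ and $\mathsf{v}\in Z(P_1,\dots,P_m)_{\Sing}$. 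Therefore $Z(Q)_{\Sing}\subseteq Z(P_1,\dots,P_m)_{\Sing}$, which gives $\Brk(P_1,\dots,P_m)\le\Brk(Q)$.

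Lemma~\ref{lem:brk-str} applies because $\overline{\mathbb{K}}$ is algebraically closed with $\ch=0$ or $\ch>d$. It gives $\Brk(Q)\le 2\str_{\overline{\mathbb{K}}}(Q)=2\str_{\overline{\mathbb{K}}}(P_1,\dots,P_m)$. Substituting this into the displayed bound yields exactly the stated constant.

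The main point needing care is confirming that the hypotheses of Theorem~\ref{thm:collective str-birc} hold here: either $\ch(\mathbb{K})=0$, or $\mathbb{K}$ is perfect and infinite with $\ch(\mathbb{K})>d$. These are precisely the hypotheses assumed in the statement. Beyond that check, the argument is bookkeeping of constants, and I expect no genuine obstacle.
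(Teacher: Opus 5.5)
Your proposal is correct and follows the paper's proof. The paper likewise combines the right inequality of Theorem~\ref{thm:collective str-birc} over $\mathbb{K}$ with the bound $\Brk(P_1,\dots,P_m)\le 2\str_{\overline{\mathbb{K}}}(P_1,\dots,P_m)$, which is the left inequality of that theorem applied over $\overline{\mathbb{K}}$; you re-derive this bound directly via $Z(Q)_{\Sing}\subseteq Z(P_1,\dots,P_m)_{\Sing}$ and Lemma~\ref{lem:brk-str}.
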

\begin{proof}
The left inequality is trivial.  Denote $C \coloneqq 6(2^{d-1} - 1) \binom{d}{\lfloor d/2 \rfloor}(d-1)m$. By Theorem~\ref{thm:collective str-birc},  we derive  
\[
\str_{\mathbb{K}} (P_1,\dots,  P_m) \le C (\Brk(P_1,\dots,  P_m) + m - 1) \le C (2 \str_{\overline{\mathbb{K}}} (P_1,\dots,  P_m) + m - 1),
\]
and this completes the proof.  
\end{proof}

\subsection{Number of integral solutions} 
Let $\mathbb{K}$ be a number field and let $d \ge 2$ be an integer.  By Corollary~\ref{cor:str-to-Birch},  the same argument as in the proof of \cite[Theorem~2.5]{LZ24} yields a Schmidt-type result for the number of integral solutions of a polynomial system.  

To state the result,  we introduce some notations.  Let $\mathcal{O}$ be the ring of integers in $\mathbb{K}$.  Suppose that for each $s \in [d]$,  $g_{s,1},\dots,  g_{s, m_s} \in \mathcal{O}[x_1,\dots,  x_n]$ are polynomials of degree $s$,  where $m_s \ge 0$ and $m_d \ge 1$.  Given an integral ideal $\mathfrak{a}$ of $\mathcal{O}$,  we choose an integral basis $\omega_1$,  $\dots$,  $\omega_\ell$ of $\mathfrak{a}$.  Since $\omega_1$,  $\dots$,  $\omega_\ell$ is also an $\mathbb{R}$-basis of $\mathbb{K} \otimes_{\mathbb{Q}} \mathbb{R}$,  we may identify $\mathbb{K} \otimes_{\mathbb{Q}} \mathbb{R}$ with $\mathbb{R}^{\ell}$.  A subset $B \subseteq (\mathbb{K} \otimes_{\mathbb{Q}} \mathbb{R})^n$ is a \emph{box with respect to this basis}, if its image $B' \subseteq \mathbb{R}^{\ell n}$ under the identification $(\mathbb{K} \otimes_{\mathbb{Q}} \mathbb{R})^n \simeq \mathbb{R}^{\ell n}$ is of the form $B' = [a_1,b_1] \times \cdots [a_{\ell n},  b_{\ell n}]$.  We define for a box $B$ the \emph{counting function} $N_B: \mathbb{R} \to \mathbb{N} \cup \{0\}$ by
\[
N_B(t) \coloneqq \big\lvert \{
\mathsf{v} \in \mathfrak{a}^n \cap (t B): g_{s,j} (\mathsf{v}) = 0,\; s\in [d],  \; j \in [m_s]
\} \big\rvert.
\]
We denote $M_0 \coloneqq 0$ and for each $s \in [d]$,  we write 
\[
M_s \coloneqq \sum_{j=1}^s j m_j,\quad 
u_s \coloneqq \sum_{j=s}^{d} 2^{j-1}(j-1)m_j,\quad \lambda_s \coloneqq M_s (2^{s-1} + u_{s + 1}) + u_{s+1} + \sum_{j=s+1}^d u_j m_j.
\] 
Lastly,  we define $\Lambda \coloneqq \max \{\lambda_s: s = 0 \text{~or~} m_s \ge 1\}$,  where $\lambda_0  = u_1 + \sum_{j=1}^d u_j m_j$. 
\begin{proposition}[Schmidt-type result for integral solutions]\label{prop:integral}
Suppose that $\mathbb{K}$ is a number field,  $d \ge 2$ is an integer, and for each $s \in [d]$,  $g_{s,1},\dots,  g_{s, m_s} \in \mathcal{O}[x_1,\dots,  x_n]$ are polynomials of degree $s$,  where $m_s \ge 0$ and $m_d \ge 1$.  Let $\mathfrak{a}\subseteq \mathcal{O}$ be an integral ideal with a $\mathbb{Z}$-basis $\omega_1,\dots,  \omega_{\ell}$,  and let $B \subseteq \mathbb{K} \otimes_{\mathbb{Q}} \mathbb{R}$ be a box.  Denote by $P_{s,j} \in \mathcal{O}[x_1,\dots, x_n]_{s}$ the highest degree part of $g_{s,j}$ for each $s \in [d]$ and $j \in [m_s]$.  If
\[
\str_{\mathbb{K}}(P_{s,1},\dots,  P_{s,m_s}) > 6(2^{s-1} - 1)\binom{s}{\lfloor s/2 \rfloor}(s-1) m_s (m_d \Lambda + m_s - 1)
\]
for any $s \in [d]$ with $m_s \ge 1$,  then there is a constant $\delta \coloneqq \delta(\mathbb{K},  P_{s,j}) > 0$ such that for a sufficiently large $t$,  it holds that
\begin{equation}\label{prop:integral:eq1}
N_B(t) = \mu t^{\ell (n - M_d)} + O(t^{\ell (n - M_d) - \delta}),
\end{equation}
where both $\mu$ and the lower order term in \eqref{prop:integral:eq1} only depend on $\mathbb{K}$,  $\mathfrak{a}$,  $\omega_1,\dots,  \omega_{\ell}$ and $g_{s,j}$'s.
\end{proposition}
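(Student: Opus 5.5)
The plan is to reduce Proposition~\ref{prop:integral} to the existing Schmidt-type count of \cite[Theorem~2.5]{LZ24}. Its proof runs the circle method over the number field $\mathbb{K}$ (in the Skinner form, adapted to the ideal $\mathfrak{a}$, the basis $\omega_1,\dots,\omega_\ell$ and the box $B$). Its only input on the polynomials is a lower bound on the Birch rank (codimension of the singular locus) of the top-degree forms of each degree. We first pin down that input precisely, and then use Corollary~\ref{cor:str-to-Birch} to derive it from the strength hypothesis.

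Concretely, the circle-method argument of \cite{LZ24} (following Schmidt's treatment of systems of mixed degree) works through the degrees in descending order. It uses Weyl differencing, where the multilinear forms obtained by differencing $P_{s,j}$ are exactly the polarizations $f_{P_{s,j}}$. It then treats the minor arcs and lower-degree layers by fixing the higher-degree variables. The outcome is that the asymptotic \eqref{prop:integral:eq1} holds, with main term $\mu t^{\ell(n-M_d)}$ (product of singular series and singular integral) and a power saving $\delta>0$, provided that for every $s$ with $m_s\ge 1$,
\[
\Brk(P_{s,1},\dots,P_{s,m_s}) > m_d\Lambda .
\]
Here $\Lambda$ is the quantity recorded before the statement. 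Its definition through $M_s$, $u_s$ and $\lambda_s$ keeps track of the codimension losses incurred at each layer. So the first step is to check, from the proof of \cite[Theorem~2.5]{LZ24}, that the argument uses only this Birch-rank condition (equivalently, bounds on the dimension of the singular loci $Z(P_{s,1},\dots,P_{s,m_s})_{\Sing}$). In particular it must never use strength directly, except through the implication "strength large $\Rightarrow$ Birch rank large".

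The second step is to apply Corollary~\ref{cor:str-to-Birch} to each family $(P_{s,1},\dots,P_{s,m_s})$, with degree $s$ and $r=m_d\Lambda+1$. The hypothesis of the corollary is that $\ch(\mathbb{K})=0$, which holds automatically for a number field. Its numerical hypothesis is exactly the assumption of Proposition~\ref{prop:integral}, since that assumption is a strict inequality with $m_d\Lambda$ in place of $r$, which is equivalent to a non-strict inequality with $r=m_d\Lambda+1$. The corollary therefore gives $\Brk(P_{s,1},\dots,P_{s,m_s}) > m_d\Lambda$ for every such $s$. For $s=1$ the linear forms make sense of the bound only formally, since $\binom{1}{0}(0)=0$. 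We treat this case separately: after linear elimination of the degree-one equations, the linear forms must be linearly independent, which is implied by the nonzero strength. If needed, one restricts to the subspace they cut out, as \cite{LZ24} does.

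The third step is to feed these Birch-rank bounds into the argument of \cite[Theorem~2.5]{LZ24}, which gives \eqref{prop:integral:eq1} with $\delta$ depending only on $\mathbb{K}$ and the $P_{s,j}$. We expect the main obstacle to be the bookkeeping in the first step. Strength must be replaced by Birch rank at every place where \cite{LZ24} invokes its own (polynomial) strength-to-Birch comparison. In particular, the constant $\Lambda$ has to be matched with the threshold that the circle method actually requires once the lower-degree layers are included. This is where the linear bound of Corollary~\ref{cor:str-to-Birch} takes the place of their polynomial bound.
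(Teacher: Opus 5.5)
Your route is the paper's: it proves Proposition~\ref{prop:integral} by rerunning the argument of \cite[Theorem~2.5]{LZ24} with Corollary~\ref{cor:str-to-Birch} in place of their polynomial strength-to-Birch comparison. That is exactly your plan, including the separate, trivial treatment of $s=1$, where the constant vanishes and the hypothesis only says the linear forms are independent.

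One step is wrong as written. Set $C_s \coloneqq 6(2^{s-1}-1)\binom{s}{\lfloor s/2\rfloor}(s-1)m_s$.
\begin{itemize}
\item The strict inequality $\str_{\mathbb{K}}(P_{s,1},\dots,P_{s,m_s}) > C_s(m_d\Lambda + m_s - 1)$ is \emph{not} equivalent to the non-strict inequality with $r = m_d\Lambda+1$.
\item The latter reads $\str_{\mathbb{K}}(P_{s,1},\dots,P_{s,m_s}) \ge C_s(m_d\Lambda + m_s - 1) + C_s$, and $C_s \ge 12$ for $s \ge 2$. So it is strictly stronger than the hypothesis.
\item Hence Corollary~\ref{cor:str-to-Birch} cannot be invoked with $r = m_d\Lambda + 1$. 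Applied with $r = m_d\Lambda$, it only yields $\Brk(P_{s,1},\dots,P_{s,m_s}) \ge m_d\Lambda$.
\end{itemize}

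To get the strict bound you want, use the right inequality of Theorem~\ref{thm:collective str-birc} directly:
\[
C_s\big(\Brk(P_{s,1},\dots,P_{s,m_s}) + m_s - 1\big) \ge \str_{\mathbb{K}}(P_{s,1},\dots,P_{s,m_s}) > C_s(m_d\Lambda + m_s - 1).
\]
This forces $\Brk(P_{s,1},\dots,P_{s,m_s}) > m_d\Lambda$, and it also sidesteps the proviso $r \in [n]$ in the corollary. With that correction, your proposal is complete at the level of detail the paper itself gives.
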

\begin{remark}
The leading coefficient $\mu$ in \eqref{prop:integral:eq1} equals $\mathfrak{S} \mathfrak{I}$,  where $\mathfrak{S}$ (resp.  $\mathfrak{I}$) is the singular series (resp.  singular integral).  We refer interested readers to \cite[Subsection~1.3]{FM17} for precise definitions of $\mathfrak{S}$ and $\mathfrak{I}$.  Moreover,  Proposition~\ref{prop:integral} is an effective version of \cite[Theorem~2.5]{LZ24}. On the other side,  given $P_1,\dots,  P_m \in \mathbb{K}[x_1,\dots,  x_n]_d$,  it is proved in \cite[Proposition~2.13]{baily2024strength} that \eqref{prop:integral:eq1} holds if
\[
\str_{\mathbb{K}}(P_1,\dots,  P_m) > (2^{d-1} - 1) \binom{d}{\lfloor d/2 \rfloor} (d-1)^2 m \left[  m(m + 1)(d - 1)2^{d-1} + m - 1 \right].
\]
We note that in this case,  Proposition~\ref{prop:integral} slightly improves the above lower bound,  by a factor of $d-1$ when $d \ge 8$:
\[
\str_{\mathbb{K}}(P_1,\dots,  P_m) > 6(2^{d-1} - 1) \binom{d}{\lfloor d/2 \rfloor} (d-1) m (  m^2 d 2^{d-1} + m - 1 ).
\] 
\end{remark}

\bibliographystyle{abbrv}
\bibliography{ref}
\end{document}